\definecolor{darkblue}{rgb}{0.0,0,0.7}
\newcommand{\darkblue}{\color{darkblue}}
\definecolor{darkred}{rgb}{0.68,0,0}
\newcommand{\darkred}{\color{darkred}}
\definecolor{darkgreen}{rgb}{0,.38,0}
\newcommand{\darkgreen}{\color{darkgreen}}
\newcommand{\defn}[1]{\emph{\darkblue #1}}
\newcommand{\defna}[1]{\emph{\darkred #1}}
\newcommand{\defng}[1]{\emph{\darkgreen #1}}
\setlist[enumerate]{
	label=\textnormal{({\roman*})},
	ref={\roman*}}
\def\th@plain{%
	\thm@notefont{}% same as heading font
	\itshape % body font
}
\def\th@definition{%
	\thm@notefont{}% same as heading font
	\normalfont % body font
}
\newtheorem{thm}{Theorem}[section]
\newtheorem{lemma}[thm]{Lemma}
\newtheorem*{claim*}{Claim}
\newtheorem{prop}[thm]{Proposition}
\newtheorem{conj}[thm]{Conjecture}
\theoremstyle{definition}
\newtheorem{rem}[thm]{Remark}
\numberwithin{figure}{section}
\numberwithin{equation}{section}
\def\bu{\bullet}
\def\emp{\nothing}
\def\zz{\mathbb Z}
\def\nn{\mathbb N}
\def\cc{\mathbb C}
\def\rr{\mathbb R}
\def\qqq{\mathbb Q}
\def\kk{\mathbb K}
\def\ov{\oa}
\def\la{\lambda}
\def\ga{\gamma}
\def\si{\sigma}
\def\de{\delta}
\def\ep{\ve}
\def\al{\alpha}
\def\be{\beta}
\def\om{\omega}
\def\ve{\varepsilon}
\def\ssu{\subset}
\def\<{\langle}
\def\>{\rangle}
\def\SO{ {\text {\rm SO} } }
\def\GL{ {\text {\rm GL} } }
\def\rR{ {\textsc {\rm R} } }
\def\oa{\overrightarrow}
\def\0{{\mathbf 0}}
\def\ol#1{{\overline {#1}}}
\def\nothing{\varnothing}
\def\.{\hskip.06cm}
\def\ts{\hskip.03cm}
\def\pt{\partial}
\def\bz{{\textbf{\textit{z}}}}
\def\bx{{\textbf{\textit{x}}}}
\def\by{{\textbf{\textit{y}}}}
\def\bbe{\textbf{\textit{e}}}
\def\bal{{\boldsymbol{\alpha}}}
\def\bbe{{\boldsymbol{\be}}}
\def\bpi{{\boldsymbol{\pi}}}
\def\brho{{\boldsymbol{\rho}}}
\def\La{\Lambda}
\def\ze{{\zeta}}
\newcommand{\sign}{\mathrm{sign}}
\newcommand{\SSYT}{\operatorname{SSYT}}
\def\.{\hskip.06cm}
\def\ts{\hskip.03cm}
\def\nin{\noindent}
\newcommand{\textsu}[1]{\textup{\textsf{#1}}}
\newcommand{\ComCla}[1]{\textup{\textsu{#1}}}
\newcommand{\sharpP}{\ComCla{\#P}}
\newcommand{\SP}{\ComCla{\#P}}
\newcommand{\SBQP}{\ComCla{\#BQP}}
\newcommand{\GapP}{\ComCla{GapP}}
\newcommand{\GapPP}{\ComCla{GapP}_{\ge 0}}
\newcommand{\Sigmap}{\ensuremath{\Sigma^{{\textup{p}}}}}
\newcommand{\Pip}{\ensuremath{\Pi^{{\textup{p}}}}}
\newcommand{\NP}{\ComCla{NP}}
\newcommand{\VP}{\ComCla{VP}}
\newcommand{\VNP}{\ComCla{VNP}}
\newcommand{\BPP}{\ComCla{BPP}}
\newcommand{\BQP}{\ComCla{BQP}}
\newcommand{\coNP}{\ComCla{coNP}}
\newcommand{\E}{\ComCla{E}}
\renewcommand{\P}{\ComCla{P}}
\newcommand{\CeqP}{\ComCla{C$_=$P}}
\newcommand{\PH}{\ComCla{PH}}
\newcommand{\CH}{\ComCla{CH}}
\newcommand{\PSPACE}{\ComCla{PSPACE}}
\newcommand{\FPSPACE}{\ComCla{FPSPACE}}
\newcommand{\EXPSPACE}{\ComCla{EXPSPACE}}
\newcommand{\FP}{\ComCla{FP}}
\newcommand{\PP}{\ComCla{PP}}
\newcommand{\NQP}{\ComCla{NQP}}
\newcommand{\QMA}{\ComCla{QMA}}
\newcommand{\coQMA}{\ComCla{coQMA}}
\newcommand{\AM}{\ComCla{AM}}
\newcommand{\coAM}{\ComCla{coAM}}
\newcommand{\MA}{\ComCla{MA}}
\newcommand{\EBPP}{\ComCla{$\exists\cdot$BPP}}
\def\SP{\sharpP}
\def\GRH{\textup{\sc GRH}}
\def\ERH{\textup{\sc ERH}}
\def\HN{\textup{\sc HN}}
\def\HNP{\textup{\sc HNP}}
\def\SHN{\textup{\sc \#HN}}
\def\SC{\textup{\sc Schubert}}
\def\SV{\textup{\sc SchubertVanishing}}
\def\LR{\textup{\sc LR}}
\def\poly{{\P}}
\def\CEP{{\CeqP}}
\def\coCEP{\ComCla{coC$_=$P}}
\newcommand{\inv}{\operatorname{{\rm inv}}}
\newcommand{\Des}{\operatorname{{\rm Des}}}
\newcommand{\des}{\operatorname{{\rm des}}}
\newcommand{\Sch}{\mathfrak{S}} %% Macro for Schubert polynomials
\DeclareMathOperator{\zero}{\mathbf{0}} % zero vector
\newcommand{\RC}{{\text {\rm RC} } }
\newcommand{\fg}{\mathfrak{g}}
\newcommand{\Par}{\text{\bf Par}}
\newcommand{\VarB}{\text{\bf Var}}
\newcommand{\VarG}{\text{\bf Var}}
\newcommand{\EB}{\text{\bf Eq}}
\newcommand{\EG}{\text{\bf Eq}}
\begin{document}

\title[Vanishing of Schubert Coefficients]{Vanishing of Schubert Coefficients}

\author[Igor Pak \. \and \. Colleen Robichaux]{Igor Pak$^\star$  \. \and \.  Colleen Robichaux$^\star$
%\\ (a joint appendix with \. David E Speyer)
}

\makeatletter
% \let\@wraptoccontribs\wraptoccontribs
% \makeatother
% \contrib[with a joint appendix with]{David E Speyer}

\thanks{\thinspace ${\hspace{-.45ex}}^\star$Department of Mathematics,
UCLA, Los Angeles, CA 90095, USA. Email:  \texttt{\{pak,robichaux\}@math.ucla.edu}}

% \thanks{\thinspace ${\hspace{-.45ex}}$Email: \ts \texttt{\{pak,robichaux\}@math.ucla.edu}}

\thanks{\today}

\begin{abstract}
\emph{Schubert coefficients} are nonnegative integers \ts $c^w_{u,v}$ \ts
that arise in Algebraic Geometry and play a central role in Algebraic Combinatorics.
It is a major open problem whether they have a combinatorial interpretation,
i.e, whether \ts $c^w_{u,v} \in \SP$.

We study the closely related \emph{vanishing problem of Schubert coefficients}:
\ts $\{c^w_{u,v}=^?0\}$.  Until this work it was open whether
this problem is in the polynomial hierarchy~$\PH$.  We prove
that \ts $\{c^w_{u,v}=^?0\}$ \ts in $\coAM$ assuming the~$\GRH$.
In particular, the vanishing problem is in $\Sigmap_2\ts$.

Our approach is based on constructions \emph{lifted formulations},
which give polynomial systems of equations for the problem.
The result follows from a reduction to \emph{Parametric Hilbert's
Nullstellensatz}, recently studied in \cite{A+24}.
We extend our results to all classical types.
% With one notable exception of the vanishing problem in type~$D$,
% being in \ts $\coAM$,
% all our initial results extend to all types.
Type~$D$ is resolved in the appendix (joint with David Speyer).
%
% We also apply our construction to show that the vanishing problem
% is in \ts $\P_\rr$ \ts in the
% Blum--Shub--Smale (BSS) model of computation.
% Similarly, we prove that
%computing Schubert coefficients is in $\SP_\cc$\ts, a
%counting version of the BSS model.
% This is the first nontrivial upper bound for the problem.

\end{abstract}

%%%%%%%%%%%%%%%%%%%%%%%%%%%%%%%%%%%%%%%%%%%%%%%%%%%%%%%%%%%%
%  TITLE PAGE information
%%%%%%%%%%%%%%%%%%%%%%%%%%%%%%%%%%%%%%%%%%%%%%%%%%%%%%%%%%%%
\maketitle

% \newpage

%\medskip

\section{Introduction}\label{sec:intro}

\subsection{Foreword}\label{ss:intro-foreword}
This paper occupies an unusual ground,  bridging between
Schubert Calculus, a subarea of Algebraic Geometry and
Algebraic Combinatorics, on the one hand, and Computational
Complexity on the other.
Since these are far apart, we make an effort to give as much
background as we can to make the results comprehensible.

Below is a very lengthy introduction which includes a lot of
standard results and relevant background in Algebraic Combinatorics,
as well as their meaning in terms of computational complexity classes.
Eventually we get to the point when we can state our results and
explain their importance.  We beg the reader's forgiveness
for being introductory or even basic, at times.  The experts
can simply skip those parts.

Throughout the introduction we assume the reader is familiar
with standard notions and results in Computational Complexity.
The reader in need of a quick reminder is referred to
Section~\ref{s:CS}, where we give a quick overview of standard
complexity classes.
% before
% returning back to the introduction.

\smallskip

\subsection{Littlewood--Richardson coefficients}\label{ss:intro-back}
We begin with the Littlewood--Richardson (LR) coefficients
$\{c^\la_{\mu,\nu}\}$ that are the most important
special case of the Schubert coefficients.  The are also
a major source of inspiration behind many generalizations.

\defn{Schur polynomials}
% \footnote{Although named after Schur,
% they were introduced by Cauchy in 1815.  They were studied
% b y Jacobi in 1841, and many others before Schur, see \cite[p.~397]{Sta99}.}
\ts $s_\la=s_\la(x_1,\ldots,x_n) \in \nn[x_1,\ldots,x_n]$, are symmetric
polynomials corresponding to irreducible characters of $\GL_n(\cc)$ and are
given by an explicit determinant formula, see e.g.\
\cite{Mac95,Sta99}.
Here $\la$ is an \defn{integer partition} with at most $n$ parts:
$\la = (\la_1\ge \ldots \ge \la_n)$.   We use a shorthand \ts
$\bx := (x_1,\ldots,x_n)$.  Schur polynomials can also be defined as
$$
s_\la(\bx) \, := \, \sum_{A\in \SSYT(\la)} \. \bx^A \quad
\text{where} \quad \bx^A \, := \, \prod_{(i,j) \in \la} \. x_{A(i,j)}\..
$$
Here the summation is over \defn{semistandard Young tableaux} $A$ of shape $\la$,
i.e.\ integer functions $A: \la \to \nn$ on a Young diagram which weakly
increase in rows and strictly increase in columns, see Figure~\ref{f:SSYT}.

% \vskip-.3cm
\begin{figure}[hbt]
\begin{center}
	\includegraphics[height=2.cm]{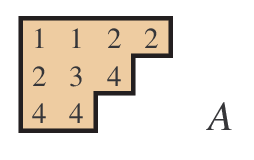}
%\hskip-4.1cm
%\epsfig{file=RC-graphs.eps,height=2.7cm}
\vskip-.3cm
\caption{Semistandard Young tableau \ts $A\in \SSYT(4,3,2)$ \ts and the
corresponding monomial \ts $\bx^A \ts = \ts x_1^2 \. x_2^3 \. x_3 \. x_4^3$\ts. }
\label{f:SSYT}
\end{center}
\end{figure}
% \vskip-.2cm

%  and
% $m_k(A):=\#\{(i,j)\in \la \. : \. A(i,j)=k\}$ is the \emph{content} of~$A$.
From here it is easy to see that Schur functions form a linear basis
in the ring \ts $\La_n:=\cc[x_1,\ldots,x_n]^{S_n}$ \ts of symmetric polynomials
in~$n$ variables.  Therefore, their multiplication structure constants
in $\La_n$ are well defined.\footnote{Formally, one needs to take multiplication
in the inverse limit $\La$ of $\La_n$\ts,
so that these structure constants depend only
on partitions and not on~$n$.}
\defn{Littlewood--Richardson $($LR$)$ coefficients}
are now defined as:
$$s_\mu \cdot s_\nu \, = \, \sum_\la \, c^\la_{\mu,\nu} \, s_\la\ts.
$$

Schur polynomials encode fundamental symmetries in algebra and geometry.
They are ubiquitous in Enumerative and Algebraic Combinatorics
\cite{Ful97,Sta99}, Representation Theory \cite{Sag01},
Algebraic Geometry \cite{AF,Man01}, Quantum Information Theory \cite{OW21},
Algebraic Complexity \cite{CKLMSS}, and Geometric Complexity Theory (GCT)
\cite{BI18,IP17}.  Unsurprisingly, just about every aspect
of Schur polynomials and their generalizations has been studied
to great extent.

The LR-coefficients are especially well studied from both combinatorial and
computational points of view \cite{Pan23}.  There are at least~20 combinatorial
interpretations for $\{c^\la_{\mu,\nu}\}$,
% \footnote{Although the first combinatorial
% interpretation was introduced by Littlewood and Richardson in 1930s, it was not
% rigorously proved until around 1980, see \cite{Mac95}.}
some of which are transparently
$\SP$-functions for the unary input  and a few others for the binary input
\cite[$\S$11.4]{Pak-OPAC}.  Denote by \ts $\LR$ \ts the counting problem of
computing LR-coefficients.  It is known that computing LR-coefficients
is $\SP$-complete for the input in binary \cite{Nar06}.

\smallskip

\begin{conj}[{\rm \cite[Conj.~5.14]{Pan23}}{}]
\label{conj:LR-unary}
\ts $\LR$ \ts is \ts $\SP$-complete for input in unary.
\end{conj}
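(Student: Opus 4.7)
The plan is to split into the two directions. The upper bound \ts $\LR \in \SP$ \ts for unary input is essentially immediate from any standard combinatorial rule: LR tableaux of shape \ts $\la/\mu$ \ts with content \ts $\nu$, Knutson--Tao hives with boundary $(\la,\mu,\nu)$, or Knutson--Tao puzzles of side $|\la|$ all have total size polynomial in $|\la|+|\mu|+|\nu|$, and their defining conditions are checkable in polynomial time. The entire content of Conjecture~\ref{conj:LR-unary} is therefore the \ts $\SP$-hardness direction.

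For $\SP$-hardness, the plan is to reduce from a problem that is already $\SP$-complete under unary encoding, i.e., one whose input magnitudes are polynomially bounded in the input size. Natural candidates are \ts $\problem{Permanent}$ \ts of $\{0,1\}$-matrices and \ts $\problem{\#PerfMatch}$ \ts in bipartite graphs (both $\SP$-complete by Valiant), or \ts $\problem{\#ContingencyTables}$ \ts with small line sums. The cleanest framework in which to design such a reduction is the Knutson--Tao hive model, in which $c^\la_{\mu,\nu}$ equals the number of integer lattice points of a polytope whose $O(|\la|^2)$ facets depend linearly on the boundary data $(\la,\mu,\nu)$. Thus, given an $n\times n$ $\{0,1\}$-matrix $M$, we would construct partitions $\la,\mu,\nu$ with $|\la|,|\mu|,|\nu|=\mathrm{poly}(n)$ whose hive polytope has integer points in bijection with the perfect matchings of the bipartite graph of~$M$, or relate $\per(M)$ to polynomially many such coefficients via inclusion--exclusion.

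The main obstacle, and the reason the conjecture remains open, is the polynomial-size budget on the partitions. Narayanan's binary-input hardness proof uses partitions of polynomial bit length but exponential magnitude, and essentially every standard algebraic bridge---the identity $K_{\la\mu}=c^\la_{(\mu_1),\ldots,(\mu_k)}$ interpreted as an iterated LR product, Jacobi--Trudi expansion, or the character-theoretic computation of $s_\la(1^N)$---inflates partition sizes beyond polynomial in the source-problem input. A successful proof must therefore exploit the geometric freedom of the hive polytope (or the puzzle model) directly, designing compact boundary-data gadgets whose interior lattice points simulate matching-type combinatorics without spurious degrees of freedom that overcount.

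As a concrete intermediate target, one would first try to reduce \ts $\problem{\#ContingencyTables}$ \ts with line sums given in unary, since contingency tables naturally embed as slices of the hive polytope; coupling such a reduction with a Kostka-to-LR bridge (via polynomially many intermediate partitions, provided the inflation can be controlled) would be decisive. We expect the hardest step to be closing the gap between what the hive polytope can encode in polynomial size and what counting matchings demands; in fact, a negative resolution of this gap would itself be of substantial interest, since it would suggest that the unary version of \ts $\LR$ \ts sits strictly below \ts $\SP$, which would have notable consequences for the broader complexity-theoretic program outlined in this paper.
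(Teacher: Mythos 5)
This statement is Conjecture~\ref{conj:LR-unary}, not a theorem: the paper cites it from Panova's survey \cite{Pan23} and offers no proof, so there is no ``paper's own proof'' against which to compare yours. Your writeup correctly recognizes this --- you explicitly say the conjecture ``remains open'' --- and what you have written is a research program rather than a proof. Your upper-bound observation (that $\LR \in \SP$ for unary input, via LR tableaux, hives, or puzzles of size polynomial in $|\la|+|\mu|+|\nu|$) is correct and uncontroversial; the paper implicitly takes this for granted when it notes that several of the $\sim$20 known combinatorial interpretations are ``transparently $\SP$-functions for the unary input.'' Your identification of the obstacle on the hardness side is also accurate: Narayanan's $\SP$-completeness proof for binary input produces partitions of exponential magnitude, and the standard algebraic bridges (Kostka-to-LR, Jacobi--Trudi, specializations) all blow up partition sizes. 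Your suggested intermediate target --- reducing from counting contingency tables with unary line sums --- matches what the paper itself flags as a more basic open problem in the remarks following Conjecture~\ref{conj:SC-unary}, namely \cite[\S13.4(1)]{Pak-OPAC}. In short: you have not proved the statement, but you were right not to claim otherwise, and your analysis of why it is hard and what a reduction would need to accomplish is sound. If you want to turn this into a contribution, the concrete place to start is exactly where you point: designing polynomial-size hive or puzzle boundary gadgets that simulate matching or contingency-table constraints without overcounting, or proving a conditional lower bound showing such gadgets cannot exist.
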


\smallskip

Most remarkably, the \emph{vanishing of LR-coefficients} $\{c^\la_{\mu,\nu}=^?0\}$
is in~$\poly$ even for the input in binary.  This was shown in \cite{DeLM06,MNS12}
as an easy consequence of the celebrated \defn{saturation theorem} by Knutson
and Tao \cite{KT99}.  See also \cite{BI13} for a fast algorithm based on
network max-flows.

This ``vanishing in $\poly$'' property inspired an
important part of Mulmuley's program GCT towards proving that $\VP\ne\VNP$ \cite{Mul09}.
Specifically, Mulmuley aimed to modify and extend this property to
\emph{Kronecker coefficients} which generalize the LR-coefficients,
see~$\S$\ref{ss:disc-comp}{\small $(iii)$}.
Some of Mulmuley's conjectures were refuted in \cite{BOR09},
while his general approach was undermined in \cite{BIP19,IP17}.  Most recently,
it was shown in \cite{PP20} that even \emph{reduced Kronecker coefficients}
do not have the saturation property, adding further doubts to the program.

\medskip

\subsection{Schubert polynomials}\label{ss:intro-Schub}
\defn{Schubert polynomials} \ts $\Sch_w\in \nn[x_1,\ldots,x_n]$ \ts indexed by
permutations,
are celebrated generalizations of Schur polynomials, and are not symmetric in general.
They were introduced by Lascoux and Sch\"{u}tzenberger~\cite{LS82,LS85}, to represent
cohomology classes of Schubert varieties in the complete flag variety  (see below).
% \footnote{Schubert's
% original work was a closely related study in Enumerative Algebraic Geometry \cite{Sch79}.}

Schubert polynomials have been intensely studied from algebraic, combinatorial,
and (more recently) complexity points of view.  See~\cite{Mac91,Man01} for
classic introductory surveys, \cite{Knu16,Knu22} for overviews of recent results,
\cite{AF,KM05} for geometric aspects, and \cite{Las95} for historical remarks.
We refer to \cite[$\S$10]{Pak-OPAC} for an overview of
computational complexity aspects.

We postpone the algebro-geometric background for Schubert polynomials
until~$\S$\ref{ss:setup-Schubert}.  We do not need it at this stage.
Instead, below we give two definitions.

\medskip

\nin
\underline{Algebraic Definition:} For a permutation \ts $w_\circ = (n,n-1,\ldots,2,1)$, let
$$
\Sch_{w_\circ} \. : = \. x_1^{n-1} x_2^{n-2} \. \cdots  \. x_{n-1}\..
$$
A permutation $w\in S_n$ is said to have a \defn{descent} at~$i$, if \ts $w(i)> w(i+1)$.
Denote by \ts $\Des(w)$ \ts the \defn{set of descents} \ts of~$w$, and by \ts $\des(\si):=|\Des(\si)|$
\ts the \defn{number of descents}. Define the \defn{divided difference operator}
$$
\pt_i F \, := \, \frac{F - s_i F }{x_i -x_{i+1}}\,,
$$
where the transposition $s_i:=(i,i+1)$ acts on $F\in \cc[x_1,\ldots,x_n]$ by transposing the variables.
For all $i \in \Des(w)$, let
$$
\Sch_{ws_i} \. := \. \pt_i \ts \Sch_w\.,
$$
and define all Schubert polynomials recursively.

This definition is due to Lascoux and Sch\"utzenberger \cite{LS82}.  It follows
that \ts $\Sch_w\in \zz[\bx]$ \ts are homogeneous polynomials of degree \ts $\inv(w)$.
Here \. $\inv(w):=\{(i,j) \, : \,i<j,\. w(i)>w(j)\}$ \. is the number of inversions in~$w$.

This definition can be generalized to other root systems
(also called \defn{types}). Indeed, in the corresponding \emph{Weyl group},
the transpositions $s_i$ are replaced by simple reflections, and $w_\circ$
is replaced by the longest element (\emph{long Weyl element}), see \cite{BH95}.
We omit the details which are both standard and well explained in \cite{AF}.

The disadvantage of this definition is a nonobvious combinatorial nature of
the coefficients.  It is known that  \ts $[\bx^\al] \Sch_w \in \nn$.
This was established in \cite{BJS93,FS94},
and follows from the following combinatorial definition due to Billey and
Bergeron \cite{BB93}.

\medskip

\nin
\underline{Combinatorial Definition:}
For a permutation \ts $w\in S_n$\ts, denote by $\RC(w)$  the set
of \ts \defn{RC-graphs} (also called \defn{pipe dreams}), defined as tilings
of a staircase shape with \defn{crosses} and \defn{elbows} as in the figure below,
such that:
%which satisfy two conditions:

{\small $(i)$} \, curves start in row $k$ on the left and end in column $w(k)$ on top, for all \ts $1\le k \le n$, and

{\small $(ii)$} \. no two curves intersect twice.

\nin
It follows from these conditions that every \ts $H \in \RC(w)$ \ts
has exactly \ts $\inv(w)$ \ts crosses.

%\vskip-.2cm
\begin{figure}[hbt]
\begin{center}
	\includegraphics[height=2.5cm]{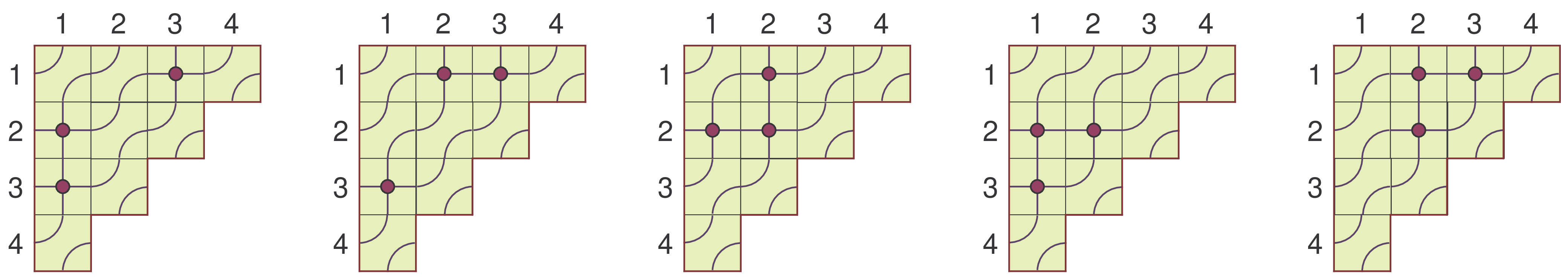}
\hskip-4.1cm
\caption{Graphs in \ts $\RC(1432)$ \ts and the corresponding Schubert
polynomial \.
 $\Sch_{1432} = x_1x_2x_3+x_1^2x_3+ x_1x_2^2+x_2^2x_3+x_1^2x_2$ \.
with monomials in this order.}
\label{f:RC}
\end{center}
% \label{f:RC}
\end{figure}

%\vskip-.2cm

%
The \defn{Schubert polynomial} \ts $\Sch_w\in \nn[x_1,x_2,\ldots]$ \ts
% corresponding to \ts $w\in S_n$ \ts
is defined as
\begin{equation}\label{eq:Schubert-def}
\Sch_w(\bx) \, := \, \sum_{H\ts\in\ts \RC(w)} \. \bx^H \quad \text{where} \quad
\bx^H \, := \, \prod_{(i,j) \. : \. H(i,j) \ts = \ts \boxplus} \. x_i \..
\end{equation}
In other words, \ts $\bx^H$ \ts is the product of \ts $x_i$'s
over all crosses \ts $(i,j)\in H$, see Figure~\ref{f:RC}.
% For example, \. $\Sch_{1432} = x_1x_2x_3+x_1^2x_3+ x_1x_2^2+x_2^2x_3+x_1^2x_2$ \. as in the figure.
Note that Schubert polynomials stabilize when fixed
points are added at the end, e.g.\ \ts $\Sch_{1432} = \Sch_{14325}$.  Thus we
can pass to the limit \ts $\Sch_w$\ts, where \ts $w\in S_\infty$ \ts is a
permutation \ts $\nn \to \nn$ \ts with finitely many nonfixed points.

Permutation $w\in S_n$ is called \defn{Grassmannian} if it has at most
one descent.  One can show that in this case $\Sch_w$ coincides with the
Schur function for the partition given by the \defn{Rothe diagram}
$$\rR(w) \, := \, \big\{\big(w(j),i\big) \. : \. i<j, \. w(i)>w(j)\big\} \. \ssu \ts \nn^2.
$$

It follows from this definition that the \defn{Schubert--Kostka numbers}
\ts $K_{w,\al} \ts := \ts [\bx^\al] \ts \Sch_w$ \ts
are nonnegative integers, and moreover that they are in $\SP$ as a counting function.
They are not known to be $\SP$-complete in the natural presentation
(this would imply Conjecture~\ref{conj:SC-unary} below).
See, however, \cite{ARY21} for other complexity properties of these
coefficients.

We refer to \cite{LLS21} for an alternative $\SP$ description of
the Schubert--Kostka numbers in terms of \emph{bumpless pipe dreams},
and to \cite{GH21} for a poly-time bijection relating two descriptions.
Finally, we refer to \cite{ST23} for the generalization of RC-graphs in
type $B$, $C$, and~$D$.  Note that all complexity properties of the
Schubert polynomials in type~$A$ (defined above), translate verbatim
to other types.

\medskip

\subsection{Schubert coefficients} \label{ss:intro-Schub-coeff}
It is well known and easy to see that Schubert polynomials \ts $\{\Sch_w \. : \. w\in S_\infty\}$ \ts
form a linear basis in the ring \ts $\zz[x_1,x_2,\ldots]$.
\defn{Schubert coefficients} \ts are defined as structure constants:
% {\small
$$
\Sch_u \cdot \Sch_v \, = \, \sum_{w \ts \in \ts S_\infty} \. c^w_{u,v} \. \Sch_w\ts.
$$
%}
It is known that \ts $c^w_{u,v} \in \nn$ \ts for all \ts $u,v,w\in S_\infty$\ts,
as they have both geometric and algebraic meanings \cite{BSY} which generalize
the number of intersection points of lines.  Since Schubert polynomials are Schur
polynomials for Grassmannian permutations, Schubert coefficients generalize the
LR-coefficients.

Denote by \ts $\SC$ \ts the function which computes Schubert coefficients
\ts $\{c^w_{u,v}\}$.  The following conjecture remains wide open:

\smallskip

\begin{conj}[{\rm \cite[$\S$13.4(10)]{Pak-OPAC}}{}]
\label{conj:SC-unary}
\ts $\SC$ \ts is \ts $\SP$-hard.
\end{conj}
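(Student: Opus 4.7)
The plan is to reduce a known $\SP$-hard problem to $\SC$. The most direct candidate is Narayanan's theorem that $\LR$ is $\SP$-complete for binary input, since LR coefficients arise as Schubert coefficients on Grassmannian triples via the Rothe-diagram dictionary. Unfortunately, a part $\la_i$ of binary size $\log k$ corresponds to roughly $k$ inversions, so a binary-LR instance translates into a Grassmannian triple $(u,v,w)$ in $S_N$ with $N$ exponential in the input. Thus this route only reduces $\SC$-hardness to Conjecture~\ref{conj:LR-unary}, which is itself open; I do not expect a direct attack along these lines to succeed.

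Instead, I would encode a $\SP$-hard problem---most naturally $\per(M)$ for a $0/1$ matrix $M$---directly into a single Schubert product, exploiting the fact that permutations in $S_n$ already have polynomial-size description. Monk's rule expresses $\Sch_u \cdot \Sch_{s_k}$ as a positive sum $\sum_t \Sch_{ut}$ over certain transpositions $t$, so iterating with a sequence of simple reflections $s_{i_1},\ldots,s_{i_m}$ turns Schubert multiplication into a weighted path count in a layered graph on $S_n$. First I would build gadgets $u_M$ and a reflection sequence encoding the row/column structure of $M$, so that paths of prescribed length from $u_M$ to a target $w_M$ biject with the terms of $\per(M)$. Then I would collapse this iterated Monk product into a single product $\Sch_u \cdot \Sch_v$ by choosing $v$ whose pipe-dream expansion~\eqref{eq:Schubert-def} enumerates exactly the required reflection sequences.

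The central technical step is to verify that $c^{w_M}_{u,v}$ equals $\per(M)$ rather than merely dominates it. Because no non-negative combinatorial rule is known for general $c^w_{u,v}$, this cannot be argued by a tableau bijection. Instead, I would establish the equality by an invariant argument on RC-graphs: choose $u$ and $v$ supported on disjoint blocks of coordinates, and a $w$ whose Rothe diagram forces a unique interaction pattern, then construct an explicit injection from the terms of $\per(M)$ onto the set of RC-graphs of $w$ compatible with the decomposition into $u$-crosses and $v$-crosses coming from the product rule.

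The main obstacle is precisely this faithfulness step. Every existing $\SP$-hardness proof for structure constants of symmetric polynomials leans on an explicit combinatorial interpretation, and $\SC$ admits none in full generality. The most plausible route is to identify a tame subfamily of triples---for example those arising from $321$-avoiding or cominuscule permutations, where Pieri- and Monk-type rules are available---on which a combinatorial rule reduces $\SC$ to a known $\SP$-hard count, and then to verify that this subfamily is rich enough to encode permanents. Finding such a subfamily with a positive rule carrying $\SP$-hard instances is the heart of the argument I would pursue.
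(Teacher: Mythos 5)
This statement is an open conjecture in the paper, not a theorem: the authors offer no proof, only the remark that it would follow from Conjecture~\ref{conj:LR-unary} (that \textsc{LR} is $\SP$-complete for unary input), together with a footnote cautioning that the published claim of a reduction from binary LR-coefficients (in \cite{MQ17}) is erroneous because it conflates input sizes. Your opening paragraph correctly rediscovers exactly that obstruction --- a binary partition part of size $k$ blows up to a Grassmannian permutation in $S_N$ with $N$ exponential in the input --- so on the diagnostic side you are aligned with the paper.

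However, what you then offer is not a proof but a research program, and the decisive step is missing. You never construct the gadgets $u_M$, $v$, $w_M$, and the claimed identity $c^{w_M}_{u,v}=\per(M)$ is precisely the ``faithfulness'' step you yourself flag as unresolved; without it there is no reduction. There is also a concrete technical obstacle in the collapsing step: iterating Monk's rule computes the product $\Sch_u\cdot\Sch_{s_{i_1}}\cdots\Sch_{s_{i_m}}$, and since each $\Sch_{s_k}=x_1+\cdots+x_k$, the product $\Sch_{s_{i_1}}\cdots\Sch_{s_{i_m}}$ is in general a positive combination of many Schubert polynomials rather than a single $\Sch_v$; so the layered path count you describe computes a sum $\sum_{v'} a_{v'}\ts c^{w}_{u,v'}$ rather than one coefficient $c^w_{u,v}$, and isolating a single term requires exactly the kind of positive combinatorial control that is unavailable for general Schubert coefficients. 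As the paper stands, Conjecture~\ref{conj:SC-unary} remains open, and your proposal should be read as a plausible but incomplete plan of attack rather than a proof.
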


\smallskip

This conjecture follow immediately from Conjecture~\ref{conj:LR-unary}.\footnote{The argument
in~\cite[p.~885]{MQ17} claiming that \ts $\{c^{w}_{uv}\}$ is $\SP$-hard via reduction
to the LR-coefficients is erroneous as it conflates different input sizes.}
It would follow from an even more basic conjecture that counting contingency tables
is $\SP$-complete when the input is in unary \cite[$\S$13.4{\small (1)}]{Pak-OPAC}.

A major open problem in Algebraic Combinatorics is whether Schubert coefficients
have a \defna{combinatorial interpretation} \cite[Problem~11]{Sta00}.
For various special cases, see \cite{Knu16, Kog01, MPPo} and a detailed discussion
in \cite[$\S$11.4]{Pak-OPAC}.  Following \cite{IP22}, this can be rephrased
as a problem whether \ts $\SC$ \ts is in~$\SP$.  We refer to \cite{KZ17,KZ23} for
the most general $\SP$-formulas for permutations with at most $3$ descents,
and for permutations with separated descents (cf.~$\S$\ref{ss:intro-prior}).

\smallskip

\begin{conj}[{\cite[Conj.~10.1]{Pak-OPAC}}{}]\label{conj:main-Schubert-SP}
\ts $\SC$ \ts is not in \ts $\SP$.
\end{conj}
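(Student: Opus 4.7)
The statement $\SC \notin \SP$ is a genuine lower bound, and since $c^w_{u,v} \in \nn$ the function is a priori compatible with $\SP$, so no $\GapP$-hardness argument suffices on its own. Any plausible plan must be conditional on a complexity-theoretic hypothesis and must rule out the existence of \emph{every} polynomial-sized combinatorial witness relation for $c^w_{u,v}$, not merely exhibit computational hardness. I would proceed in three stages.

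First, I would exploit the vanishing upper bound proved in this very paper: if $\SC \in \SP$, then $\{c^w_{u,v} =^? 0\} \in \coNP$, which would be a significant strengthening of the $\coAM$ upper bound obtained here. To rule this out, I would search for a reduction from a $\Sigmap_2$-hard problem (or, under stronger assumptions, a $\coAM$-hard problem) to nonvanishing of Schubert coefficients. The divided-difference recursion $\Sch_{ws_i} = \pt_i \Sch_w$ provides the algebraic handle: one can attempt to build families of permutations $(u_k, v_k, w_k)$ whose structure encodes alternating boolean quantifiers via nested differences, so that nonvanishing of $c^{w_k}_{u_k, v_k}$ captures the target problem. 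A weaker but still useful variant would be to reduce a $\sharpP$-hard counting problem to $\SC$ in a way that forces the witness language, if any, to compress beyond what is information-theoretically possible.

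Second, to handle witness schemes that bypass the vanishing route, I would adapt the framework of \cite{IP22} developed for Kronecker coefficients: show that a hypothetical $\SP$-formula would have to describe $c^w_{u,v}$ in a length conflicting with an independently established lower bound on the coefficient's irregularity. Saturation-failure phenomena in the spirit of \cite{PP20} (but in the Schubert rather than reduced Kronecker setting) are the natural candidate obstruction; concretely, if one proves that $c^w_{u,v}$ fails saturation in a controlled quantitative way, then $\SP$ witnesses, being well-behaved under scaling in most natural constructions, would lead to a contradiction.

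The main obstacle is the second step. Unlike the vanishing problem, whose algebraic structure this paper accesses via Parametric Hilbert's Nullstellensatz, excluding every combinatorial interpretation is a meta-mathematical question with almost no unconditional precedents. Even the analogous conjecture for Kronecker coefficients remains open despite intense effort, and current techniques yield only conditional separations under hypotheses considerably stronger than $\P \ne \NP$. So while the plan above seems to me the most promising route, its execution would almost certainly require a conceptual novelty beyond what is presently available, which is presumably why the statement is labeled a conjecture rather than a theorem.
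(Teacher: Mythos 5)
The statement you were given is a conjecture, not a theorem: the paper quotes it from \cite{Pak-OPAC} and offers no proof, so there is nothing of the authors' to compare your argument against. Your proposal is, by your own admission, a research program rather than a proof, and it does not establish the statement. It is worth recording, however, that the plan is not merely incomplete: its first stage is specifically undermined by the main results of this very paper. You propose to derive a contradiction from ``$\SC\in\SP\implies\SV\in\coNP$'' by showing that nonvanishing of Schubert coefficients is $\Sigmap_2$-hard (or $\coAM$-hard). But Theorem~\ref{t:main-AM} places $\SV$ in $\coAM\subseteq\Pip_2$ assuming the $\GRH$, so such a hardness result would collapse $\PH$; and under standard derandomization hypotheses one has $\AM=\NP$, hence $\SV\in\coNP$ outright, in which case the implication above yields no contradiction at all. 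The authors make exactly this point in $\S$\ref{ss:disc-imply}{\small$(2)$}--{\small$(3)$}: a central message of the paper is that Conjecture~\ref{conj:main-Schubert-SP} \emph{cannot} be attacked through the vanishing problem in the way the analogous non-membership results for symmetric group characters and for the defect of Stanley's inequality were obtained, precisely because $\SV$ sits low in the polynomial hierarchy.

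Your second stage fares no better with current technology. The paper notes explicitly that the relativization argument of \cite{IP22} is inapplicable in this setting, and no saturation-failure phenomenon for Schubert coefficients in the spirit of \cite{PP20} is known; even the weaker Conjecture~\ref{conj:main-Schubert-vanishing}, that $\SV\notin\coNP$, remains open and is itself in tension with derandomization. So the honest summary is: the statement remains a conjecture, your plan does not prove it, and the most natural route you sketch is the one this paper argues is closed.
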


\smallskip

The following argument by Alejandro Morales shows that \ts $\SC\in \GapP:=\SP-\SP$, see \cite[Prop.~10.2]{Pak-OPAC}.
Let \ts $\si\in S_n$ \ts and let \ts $\rho_n:=(n-1,\ldots,1,0)\in \nn^n$.
Define
$$\Phi(\si) \. := \. \big\{(\al,\be,\ga)\in (\nn^n)^3\.: \.  \al+\be+\ga=\si\rho_n\big\}.
$$
It was shown by Postnikov and Stanley in \cite[Cor.~17.13]{PS09}, that Schubert coefficients
have a \emph{signed combinatorial interpretation}:
\begin{equation}\label{eq:PS}
c^w_{u,v} \, = \, \sum_{\si\in S_n} \, \sum_{(\al,\be,\ga)\.\in\. \Phi(\si)} \,
\sign(\si) \. K_{u, \al} \. K_{v, \be}\. K_{w, \ga}\..
\end{equation}
% Such formulas are called \emph{signed combinatorial interpretation}.
Since the Schubert--Kostka numbers $\{K_{u,\al}\}$ are in $\SP$ by~\eqref{eq:Schubert-def},
this implies that Schubert coefficients are in $\GapP$.  An alternative proof
which generalizes to other root systems, is given by the authors in \cite[Thm~1.4]{PR24}.
\medskip

\subsection{Main results}\label{ss:intro-vanish}
The \defn{Schubert vanishing problem} \ts is a decision problem
$$
\SV \, := \, \big\{c^w_{u,v} =^? 0 \big\}.
$$
This problem is also heavily studied and resolved in special cases,
particularly in type~A.  We postpone an extensive discussion of the prior work
until~$\S$\ref{ss:intro-prior}.

We now proceed to state the main result of the paper.
Recall the complexity class \ts $\AM\subseteq \Pip_2$ \ts
 of decision problems that can be decided in polynomial time by an
 \emph{Arthur--Merlin protocol} with two messages, see e.g.\ \cite{AB09,Pap94}.
See~$\S$\ref{ss:CS-more} for connections to other complexity classes.

\smallskip

\begin{thm}[{\rm Main theorem}{}]
\label{t:main-AM}
\. $\SV$ \ts is in \ts $\coAM$ \ts assuming \ts $\GRH$. The result holds for the vanishing of
Schubert coefficients in types~$A$, $B$, $C$ and~$D$.
\end{thm}

\smallskip

Here the \ts $\GRH$ \ts stands for the \emph{Generalized Riemann Hypothesis}, that
all nontrivial zeros of $L$-functions \ts $L(s,\chi_k)$ \ts have real part~$\frac12$.
% In Theorem~\ref{t:main-AM-app} joint with Speyer, we extend Theorem~\ref{t:main-AM} for all classical types.
In fact, tracing back the references shows that a weaker assumption,
the \emph{Extended Riemann Hypothesis} ($\ERH$) suffices.  We stick with
the $\GRH$ as better known, and refer to \cite[$\S$6]{BCRW08} for definitions
and relationships between these hypotheses, and to \cite{Roj07} for discussion
of an even weaker number-theoretic assumption.

\smallskip

Main Theorem~\ref{t:main-AM} proves the result for all classical
types.\footnote{The first version of this paper claimed the result only for
types \ts $A$, \ts  $B$ \ts and~$\ts C$, but not for~$\ts D$, see \cite{PR24o}.
For non-classical types \ts $E_6$, \ts $E_7$,
\ts $E_8$, \ts $F_4$ \ts and \ts $G_2\ts$,  there is only a finite number of
Schubert coefficients, so the problem is computationally uninteresting.}
The theorem has several far-reaching complexity implications,
see~$\S$\ref{ss:disc-imply}.  Notably, the
theorem shows that \ts $\SV$ \ts is in the polynomial hierarchy $\PH$
(assuming $\GRH$).  This was out of reach until now.

Indeed, until recently, \ts
$\SV \subseteq\PSPACE$ \ts was the only known upper bound.
Morales's observation above gives \ts $\SV \in \CEP$.
By itself this does not suggest that $\SV \in \PH$.  Indeed,
Tarui's theorem implies that \ts $\CEP$ \ts is not in
\ts $\PH$ \ts unless \ts $\PH$ \ts collapses to a finite level:
\ts $\PH = \Sigmap_m$ for some~$m$, see~$\S$\ref{ss:CS-count}.

\smallskip

To contrast the Main Theorem, we make a conjecture in the opposite direction:

\smallskip

\begin{conj}% [{\cite[Conj.~10.1]{Pak-OPAC}}{}]
\label{conj:main-Schubert-vanishing}
\ts $\SV$ \ts is not in \ts $\coNP$.
\end{conj}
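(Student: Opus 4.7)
The plan is to prove the lower bound by reducing a natural algebraic hardness problem to $\SV$. The paper establishes the chain $\SV \le \HNP$ and shows $\HNP \in \coAM$ under $\GRH$; the conjecture asserts, in effect, that this upper bound is essentially tight. Since unconditional separation of $\coNP$ from the second level of $\PH$ is far out of reach, any realistic attack must be conditional: the goal is to show that $\SV$ is at least $\HN$-hard, so that the widely believed (but unproven) statement $\HN \notin \coNP$ transfers to $\SV$. A weaker but still substantial milestone would be to deduce $\SV \notin \coNP$ from the conjectured $\SC \notin \SP$ (\emph{cf.} Conjecture on combinatorial interpretation), noting that a combinatorial interpretation would immediately yield an $\NP$-witness of nonvanishing.

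First, I would refine the reduction $\SV \to \HNP$ into a (conjecturally) tight correspondence, pinning down precisely which fragment of $\HNP$ is captured by Schubert vanishing. The natural candidate is the fragment where the polynomial system arises from the ideal cutting out a triple intersection of Schubert varieties in the flag manifold $\GL_n/B$. Second, I would seek a reverse reduction: given an arbitrary system $f_1 = \ldots = f_m = 0$ of total bit-length $N$, construct in polynomial time a triple $(u,v,w) \in S_n \times S_n \times S_n$ of permutations such that $c^w_{u,v} = 0$ iff the system has no common zero over $\cc$. A promising tool is the Postnikov--Stanley signed formula \eqref{eq:PS}: vanishing becomes a cancellation question among products of Schubert--Kostka numbers, and one can attempt to embed generic polynomial identities into the choice of $(u,v,w)$. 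An alternative geometric route exploits the fact that $c^w_{u,v}$ counts points in a triple intersection of Schubert varieties in general position, so vanishing corresponds to emptiness of a variety; encoding a given variety as such a triple intersection would close the loop.

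The main obstacle is precisely this reverse direction. Schubert varieties are rigid objects arising from the Bruhat decomposition, and a priori the ideals they define lie in a highly structured subfamily of all polynomial systems. It is unclear whether this subfamily is ``universal enough'' to encode arbitrary $\HN$ instances in polynomial size, even though the signed formula \eqref{eq:PS} and the $\GapP$ upper bound suggest considerable expressive power. One promising gadget would be to exploit permutations with many descents, where the combinatorial formulas of \cite{KZ17,KZ23} fail and the polynomial complexity appears genuinely algebraic; constructing a universal family of such triples parameterized by polynomial data is the key technical step.

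A secondary obstacle is the conditional nature of the conclusion: even granting the reduction, one must invoke the belief $\HN \notin \coNP$, so any formal theorem would be conditional on this (or an equivalent) hypothesis. A useful intermediate target, avoiding this dependency, would be to show that $\SV$ is $\coNP$-hard under polynomial reductions. Combined with the $\coAM$ upper bound under $\GRH$, this would already establish that $\SV \in \coNP$ implies $\NP = \coNP$, giving an unconditional collapse consequence that places $\SV$ morally outside $\coNP$ even if the formal separation remains open.
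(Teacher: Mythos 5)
The statement you are addressing is a \emph{conjecture}: the paper does not prove that $\SV\notin\coNP$, and indeed its main theorems point the other way, establishing only the upper bound $\SV\in\coAM$ under $\GRH$. So there is no proof in the paper to compare against, and your proposal is not a proof either --- it is a research plan whose decisive step (a polynomial-time ``reverse reduction'' encoding arbitrary $\HN$ instances as triples of permutations with $c^w_{u,v}=0$ iff the system is unsolvable) is exactly the part you leave open. That gap is not a technicality: Schubert systems form a very rigid subfamily of polynomial systems, and no universality result of this kind is known; the paper itself only conjectures the much weaker statement that $\SV$ is $\coNP$-hard (Conjecture~\ref{conj:main-Schubert-vanishing-coNP}).

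Beyond the missing main step, two of your logical inferences are backwards. First, you propose to ``deduce $\SV\notin\coNP$ from the conjectured $\SC\notin\SP$.'' The implication runs the other way: as the paper notes, $\SC\in\SP$ implies $\SV\in\coNP$, so Conjecture~\ref{conj:main-Schubert-vanishing} implies Conjecture~\ref{conj:main-Schubert-SP}, not conversely; $\SC\notin\SP$ is consistent with $\SV\in\coNP$. Second, your closing claim that $\coNP$-hardness of $\SV$ ``would already establish that $\SV\in\coNP$ implies $\NP=\coNP$'' is false: a $\coNP$-hard problem lying in $\coNP$ is merely $\coNP$-complete, which yields no collapse. (The collapse the paper actually extracts from the $\coAM$ upper bound is different: $\SV$ cannot be $\NP$-hard unless $\PH=\Sigmap_2$.) As it stands, the proposal neither proves the conjecture nor supplies a sound conditional route to it.
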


\smallskip

We have mixed feelings about this conjecture.  On the one hand,
Main Theorem~\ref{t:main-AM} suggests that Conjecture~\ref{conj:main-Schubert-vanishing}
is false, see~$\S$\ref{ss:disc-imply}{\small $(3)$}.  On the other hand, note that
if \ts $\SC \in \SP$, then \ts $\SV \in \coNP$.  Thus,
Conjecture~\ref{conj:main-Schubert-vanishing} implies Conjecture~\ref{conj:main-Schubert-SP}.

\begin{conj}[{\rm cf.~\cite[$\S$4]{ARY19}}{}]
\label{conj:main-Schubert-vanishing-coNP}
\ts $\SV$ \ts is \ts $\coNP$-hard.
\end{conj}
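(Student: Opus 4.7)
To prove that \ts $\SV$ \ts is \ts $\coNP$-hard, I would look for a polynomial-time reduction from a canonical \ts $\coNP$-complete problem such as \ts $\problem{UNSAT}$. Concretely, given a Boolean formula \ts $\varphi$ \ts on \ts $n$ \ts variables, the goal is to produce a triple of permutations \ts $(u_\varphi,v_\varphi,w_\varphi)$ \ts in some \ts $S_N$ \ts with \ts $N=\poly(|\varphi|)$, such that
\[
c^{w_\varphi}_{u_\varphi,v_\varphi} \, = \, 0 \quad \Longleftrightarrow \quad \varphi \text{ is unsatisfiable.}
\]

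The first path I would explore uses the Postnikov--Stanley signed formula \eqref{eq:PS}, which writes \ts $c^w_{u,v}$ \ts as a $\GapP$ quantity in Schubert--Kostka numbers. The plan is to engineer \ts $(u,v,w)$ \ts so that the alternating sum over \ts $\si \in S_n$ \ts specializes to count satisfying assignments of \ts $\varphi$, turning \ts $\problem{UNSAT}$ \ts directly into vanishing. A second route is to work inside a restricted family where a positive combinatorial rule is available, for instance the three-descent and separated-descent formulas of Knutson--Zinn-Justin \cite{KZ17,KZ23}. There vanishing becomes non-existence of a puzzle or tableau, and the reduction would encode the clauses of \ts $\varphi$ \ts as local combinatorial constraints on puzzle fillings, in the spirit of the hardness discussion in \cite[$\S$4]{ARY19}.

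The main obstacle in both routes is controlling cancellation. In the signed formula the positive and negative contributions have exponential size, and forcing their net to depend precisely on \ts $\varphi$ \ts is delicate: small perturbations in \ts $(u,v,w)$ \ts produce wildly different coefficients, so gadgets implementing individual clauses do not obviously compose. A partial hedge is to first establish hardness for the easier ``threshold'' companion problem \ts $\{c^w_{u,v} \ge k\}$, where no sign control is required, and then reduce threshold to vanishing via an auxiliary subtraction gadget built from products with coefficients computable in closed form.

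A complementary, and perhaps the most promising, strategy is suggested by the upper bound in this paper itself: nonvanishing of \ts $c^w_{u,v}$ \ts is encoded there as feasibility of a polynomial system over \ts $\qqq$. If one can establish a \emph{universality} statement in the spirit of Mn\"ev — showing that every bounded-degree polynomial system over \ts $\qqq$ \ts is expressible as a Schubert nonvanishing instance via the lifted formulation — then \ts $\coNP$-hardness of \ts $\SV$ \ts follows from the classical \ts $\coNP$-hardness of polynomial-system infeasibility. The hard part becomes exhibiting sufficient algebraic flexibility in the choice of \ts $(u,v,w)$ \ts to encode arbitrary polynomial constraints, essentially inverting the construction used to obtain the upper bound. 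I expect this to be the cleanest avenue, at the cost of first developing a much sharper understanding of the lifted formulations than is needed for the results proven here.
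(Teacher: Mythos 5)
The statement you are addressing is stated in the paper as a \emph{conjecture} (cf.~\cite[$\S$4]{ARY19}); the paper contains no proof of it, and indeed its main results go in the opposite direction, establishing \emph{upper} bounds ($\SV \in \coAM$ under the $\GRH$, hence $\SV \in \PH$). So there is no proof to compare yours against, and your text is not a proof: it is a list of three candidate strategies, none of which is carried out. The essential missing ingredient in all three is a concrete gadget translating a Boolean clause (or any other $\coNP$-hard constraint) into the data of a triple of permutations $(u_\varphi,v_\varphi,w_\varphi)$, together with an argument that the gadgets compose. Without that, the map $\varphi \mapsto (u_\varphi, v_\varphi, w_\varphi)$ is only a wish.

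Two specific cautions. First, your route via the Postnikov--Stanley signed formula \eqref{eq:PS} runs directly into the warning of Remark~\ref{rem:main-CEP}: if the positive and negative parts of that formula behaved like independent $\SP$-functions, then $\SV$ would be $\CEP$-complete and hence outside $\PH$ unless $\PH$ collapses, contradicting Theorem~\ref{t:main-AM}. Any cancellation-engineering argument must therefore respect whatever hidden rigidity keeps $\SV$ inside $\coAM$, and neither the paper nor your proposal supplies a mechanism for doing so. Second, your ``universality'' route inverts the direction of the reduction actually proved here: Lemma~\ref{l:SV-HNP} shows that $\SV$ reduces \emph{to} $\HNP$, whereas hardness requires embedding arbitrary polynomial systems \emph{into} Schubert problems. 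The systems $\mathcal{S}^{Y}(u,v,w)$ are highly structured bilinear systems whose supports are dictated by the permutations, and no Mn\"ev-type universality is known for them; establishing one would be a substantial theorem in its own right. In short, the conjecture remains open, and your proposal correctly identifies the difficulties but does not overcome them.
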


This is a decision counterpart of Conjecture~\ref{conj:SC-unary}.

%%%%%%%%%%%%%%%%%%%%%%%%%%%%%%%%%%%%%%%%%%%%%%%%%%%%%%%%%%%%%%%%%%%%%%%%%%%%%%%%%%%%%%%%%%
%%%%%%%%%%%%%%%%%%%%%%%%%%%%%%%%%%%%%%%%%%%%%%%%%%%%%%%%%%%%%%%%%%%%%%%%%%%%%%%%%%%%%%%%%%
%%%%%%%%%%%%%%%%%%%%%%%%%%%%%%%%%%%%%%%%%%%%%%%%%%%%%%%%%%%%%%%%%%%%%%%%%%%%%%%%%%%%%%%%%%
%%%%%%%%%%%%%%%%%%%%%%%%%%%%%%%%%%%%%%%%%%%%%%%%%%%%%%%%%%%%%%%%%%%%%%%%%%%%%%%%%%%%%%%%%%
%%%%%%%%%%%%%%%%%%%%%%%%%%%%%%%%%%%%%%%%%%%%%%%%%%%%%%%%%%%%%%%%%%%%%%%%%%%%%%%%%%%%%%%%%%
%%%%%%%%%%%%%%%%%%%%%%%%%%%%%%%%%%%%%%%%%%%%%%%%%%%%%%%%%%%%%%%%%%%%%%%%%%%%%%%%%%%%%%%%%%

%\newpage

\medskip

\subsection{Prior work on the Schubert vanishing problem} \label{ss:intro-prior}
The literature on the vanishing of Schubert coefficients and its various extensions
is too extensive to be fully reviewed.  Below are some highlights that
are most relevant from the complexity theoretic point of view.  Although many
of these conditions extend to all types, we restrict our presentation to type~A,
which is also best studied.  We refer to \cite[$\S$5]{StDY22} for a comparison
of some of these conditions from a combinatorial point of view.

\subsubsection{Poly-time conditions}\label{sss:special-poly-time}
It follows immediately from the algebraic definition via divided differences,
% of Lascoux--Sch\"utzenberger,
that \ts $c^w_{u,v}=0$ \ts when \ts $\des(w) > \des(u)+\des(v)$.
In \cite{Knutson01}, Knutson shows that \ts \ts $c^w_{u,v}=0$ \ts when \ts
$\Des(u) \cap \Des(v) \cap \Des(w\ts w_\circ)\ne\emp$ (see also
\cite[Cor.~4.15]{PW24}).  The apparent asymmetry
disappears in view of Equation~\eqref{eq:SchubStructure-sym} below.
In both cases, the sufficient conditions for Schubert vanishing can
be trivially verified in polynomial time.

Another simple sufficient condition for Schubert vanishing is given by
% \ts $c^w_{u,v}>0$ \ts implies that
\ts $u \not\leqslant w$, where
\ts ``$\leqslant$'' is the \emph{strong Bruhat order}.  This condition
follows immediately from the algebraic definition (see e.g.\ \cite[$\S$5.1]{StDY22}).
Famously, verifying that \ts $u \not\leqslant w$ \ts can be done in polynomial time
via \emph{Ehresmann's tableau condition}, see e.g.\
\cite[Prop.~2.1.11]{Man01}.

In~\cite[$\S$1.2, \ts $\S$4.3]{StDY22}, St.~Dizier and Yong
gave a necessary condition for nonvanishing of Schubert coefficients
in terms of certain fillings of Rothe diagrams of
permutations $u,v,w$ with weights given by permutations satisfying
additional inequalities.  Using \cite{ARY19}, the authors then prove \cite[Thms.~A and~B]{StDY22},
that the existence of such tableaux can be decided in polynomial time, thus
giving polynomial time sufficient conditions for the vanishing of
Schubert coefficients.

Most recently, \cite[Cor.~5.12]{HW24} showed that \ts $\ze(w) > \ze(u)+\ze(v)$ \ts
implies that $c^w_{u,v}=0$, where \ts $\ze(w)$ \ts is the number of nonzero rows
in the Rothe diagram \ts $\rR(w)$.  This gives yet another simple sufficient
condition for vanishing of Schubert coefficients.

\smallskip

\subsubsection{$\NP$ conditions}\label{sss:special-NP}
Every time there is a combinatorial interpretation of Schubert
coefficients is given in a special case, this can be viewed as a
sufficient condition for non-vanishing.  Formally, a $\SP$ formula
for \ts $c^w_{u,v}$ \ts gives an $\NP$ sufficient condition for deciding
\ts $\big\{c^w_{u,v}>^?0\big\}$, since a single combinatorial object counted
by the formula gives a polynomial witness for positivity.

As we mentioned above, Knutson and Zinn-Justin \cite{KZ17} gave a
remarkable combinatorial interpretation for Schubert coefficients \ts $c^w_{u,v}$ \ts where \ts
$\des(u), \des(v), \des(w)\le 3$.  Similarly, in a followup paper \cite{KZ23}, the authors gave a
combinatorial interpretation for the case \ts $\Des(u) \cap \Des(v)=\emp$, and, more generally, for
\ts $\big|\Des(u) \cap \Des(v)\big|\leq 3$.
Both cases contain Grassmannian permutations as special case, and thus give a
far-reaching generalizations of LR-coefficients.
Since computing the set of descents is in~$\poly$, the Schubert vanishing problem
\ts $\big\{c^w_{u,v} =^? 0 \big\}$ \ts is in $\NP$ in these cases.
% \Colleen{\cite{KZ23} gives a rule when $\#(\Des(u) \cap \Des(v))\leq 3$}

In \cite{Purbhoo06}, Purbhoo presents two sufficient conditions: one
for vanishing and one for non-vanishing of Schubert coefficients.  These
conditions are given combinatorially, in terms of what he calls \emph{root games}
which can be viewed as certain sequences of subsets of elements of the poset of
positive roots.  In type~$A$, the poset is isomorphic to the shifted staircase.
The sequence start at \emph{initial position} and end with a \emph{winning position},
with steps given by simple moves.

The sufficient condition for vanishing is given by a \emph{doomed position} where
no move can be made \cite[Thm~3.6]{Purbhoo06}, a property verifiable in
polynomial time  (cf.\ \cite[Thm~4.1.6]{Sea15}).
A sufficient condition for non-vanishing is given by a \emph{winning root game}
\cite[Thm~3.7]{Purbhoo06}.    Since each winning game can be verified in~$\poly$,
this sufficient condition is in~$\NP$.

% for deciding the vanishing
% of $c_{u,v}^w$ combinatorially.

Finally, in \cite[Thm~5.1]{BV08}, Billey and Vakil study the \emph{permutation arrays}
introduced by Eriksson and Linusson \cite{ELa,ELb}.  These permutation arrays
are subsets of \ts $\{1,\ldots,n\}^4$ \ts with certain poly-time checkable conditions.
The authors used elimination theory to
give a necessary condition for non-vanishing of Schubert coefficients
\ts $c^w_{u,v}>0$ \ts in terms of a unique permutation array with certain properties.
Existence of such permutation array can be verified in polynomial time, making
this necessary condition in~$\NP$ (uniqueness is a byproduct of the construction).
See also \cite[Prop.~9.7]{AB07} by Ardila and Billey, for an extension of this rule.

\smallskip

\subsubsection{Algebraic conditions}\label{sss:special-alg}
In classical types, a necessary and sufficient condition for vanishing
of Schubert coefficients was given by Purbhoo \cite{Purbhoo06} (see also
\cite{Belkale06,PurbhooThesis}).
These are linear algebraic conditions, which can be viewed as a polynomial
sized system of complex algebraic equations where some variables are
assumed to be generic.  An important feature of this approach is
the dual nature of the system, as it gives an algebraic certificate for
vanishing rather than non-vanishing.
% the number of solutions of this system is not count Schubert
% coefficients \ts $c^w_{u,v}$.
%
We give a quick description of this system in Lemma~\ref{lem:Pur}
in the appendix, as we use it to give complexity analysis of \ts $\SV$ \ts
in  the (nonstandard) BSS model of computation.

A more direct description of an algebraic system is given by Billey and Vakil
in \cite[Thm~5.4]{BV08}, which has exactly \ts $c^w_{u,v}$ \ts
solutions for a generic values of some variables.  They also describe
the system of conditions for these variable being generic under assumption
that the set of solutions is $0$-dimensional \cite[Cor.~5.5]{BV08}.
The authors do not give a complexity analysis for this
system and our approach does not apply;
see~$\S$\ref{ss:finrem-BW} for further details and a complexity discussion.

\medskip
\subsection{Hilbert's Nullstellensatz}\label{ss:intro-HN}
Let $R=\cc{}[x_1,\dots,x_s]$ for some $s>0$.
\defng{Hilbert's weak Nullstellensatz} \ts
is a fundamental result in Algebra, which states that a polynomial system
\begin{equation}\label{eq:HN-system}
f_1 \. = \. \ldots \. =  \. f_m \. = \. 0  \quad \text{where} \quad f_i\in R
\end{equation}
has no solutions over $\cc$ \. \underline{if and only if} \.
there exist \ts $(g_1,\ldots,g_m)\in R^m$, such that
$$
\sum_{i=1}^m \. f_i \. g_i \, = \, 1\ts.
$$

Now let \ts $f_1,\ldots,f_m\in \zz[x_1,\dots,x_s]$.
The decision problem $\HN$ (\defn{Hilbert's Nullstellensatz}),
asks if the polynomial system \eqref{eq:HN-system}
has a solution over \ts $\mathbb{C}$.\footnote{By the Nullstellensatz,
this is equivalent to asking if there is a solution over $\overline{\qqq}$.
}
Here and everywhere below, the \defn{size} of the polynomial system
is the sum of bit-lengths of the coefficients in the polynomials~$f_i$.

Mayr and Meyer \cite{MM82} showed that \ts $\HN$ \ts is in \ts $\EXPSPACE$,
and is also \ts $\NP$-hard.
In a series of major algebraic results \cite{Kol88,KPS01,Jel05}, it was
shown that one can take $g_i$ of single exponential size.  This in turn
reduces \ts $\HN$ \ts to a single-exponential-sized system of linear equations,
and implies that \ts $\HN$ \ts is in \ts $\PSPACE$.  This is a major
unconditional result.  It was not known if \ts $\HN$ \ts is in the
polynomial hierarchy until Koiran's breakthrough result:

% Suppose $f_1,\ldots,f_s\in \mathbb{Q}[x_1,\dots,x_n]$. The problem \problem{HN} decides if the polynomial system $f_1,\ldots,f_s$ has a solution over $\overline{\mathbb{Q}}$.

\smallskip

\begin{thm}[{\cite[Thm~2]{Koiran96}}{}]\label{t:main-HN}
    \. $\HN$ \ts is in \ts $\AM$ \ts assuming \ts $\GRH$.
\end{thm}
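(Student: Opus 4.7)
The plan is to reduce \ts $\HN$ \ts to estimating the number of primes $p$ for which the system~\eqref{eq:HN-system} admits a solution modulo $p$, and then to apply the Goldwasser--Sipser set-size lower-bound protocol, which is the archetypal $\AM$ protocol. The two key ingredients are an effective Nullstellensatz controlling the unsatisfiable case, and an effective Chebotarev density theorem (conditional on $\GRH$) controlling the satisfiable case. For the first, I would invoke the effective Nullstellensatz of Koll\'ar and Krick--Pardo--Sombra: there is a polynomial $P$ such that, whenever $f_1 = \cdots = f_m = 0$ has no solution over $\cc$, one can find $g_1,\ldots,g_m \in \zz[x_1,\ldots,x_s]$ and a nonzero integer $a$ satisfying
\[
\sum_{i=1}^m f_i \ts g_i \, = \, a\ts, \qquad \log|a| \, \le \, P(n),
\]
where $n$ is the bit size of the input. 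Reducing this identity modulo any prime $p \nmid a$ shows the system is unsatisfiable over $\ov{\ff_p}$ as well, so at most $P(n)$ primes admit a mod-$p$ solution.

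Second, I would establish the complementary density statement in the satisfiable direction. If the system is satisfiable over $\cc$, then by picking a smooth rational point on a geometric irreducible component of the variety and applying effective Chebotarev to the splitting of its residue field, I expect
\[
\#\big\{ \text{primes } p \le N \,:\, f_1 = \cdots = f_m = 0 \text{ has a solution over } \ov{\ff_p} \big\} \, \ge \, c \ts \pi(N)
\]
for some absolute constant $c>0$, with error term controlled by the $\ERH$ for the relevant $L$-functions (which is implied by $\GRH$). Taking $N = 2^{P'(n)}$ for a sufficiently large polynomial $P'$, the cardinality of this set sits polynomially above $P(n)$ in the satisfiable case and at most $P(n)$ in the unsatisfiable case, creating a gap wide enough for an $\AM$ protocol to detect.

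I would then instantiate Goldwasser--Sipser on the set $S_N$ of good primes. Membership in $S_N$ lies in $\NP$: primality is in $\poly$ by AKS, and a solution in a small extension $\ff_{p^k}$ of polynomially bounded degree $k$ can be guessed and verified coefficient-by-coefficient --- the existence of such a small-degree witness is itself an effective Lang--Weil/Chebotarev consequence of $\GRH$. The Goldwasser--Sipser protocol then distinguishes $|S_N| \ge c \ts \pi(N)$ from $|S_N| \le P(n)$ in $\AM$, completing the proof.

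The hard part will be the forward density step: one must rule out the ``Hilbert irreducibility''-style pathology in which a system solvable over $\cc$ is nevertheless unsolvable modulo a dense set of primes. In general this forces us to pass to a number field cutting out an $\ov\qqq$-point of a geometric component of the variety and then to apply effective Chebotarev uniformly in \emph{all} input parameters, with error term polynomial in $\log N$. This uniformity is exactly what $\GRH$ provides; everything else --- effective Nullstellensatz, AKS primality, and Goldwasser--Sipser --- is unconditional and assembles routinely.
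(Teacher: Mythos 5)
First, a point of context: the paper does not prove this statement at all --- it is imported verbatim as \cite[Thm~2]{Koiran96}, so there is no ``paper's own proof'' to compare against. Your sketch is, however, a recognizable reconstruction of Koiran's actual argument: count the primes $p$ for which the system is solvable modulo $p$, use the arithmetic Nullstellensatz to bound this count from above in the unsatisfiable case, use effective Chebotarev under $\GRH$ to bound it from below in the satisfiable case, and separate the two regimes with the Goldwasser--Sipser set-size protocol (membership of a prime in the good set being an $\NP$ predicate). That is the right architecture and the one Koiran uses.

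The quantitative claims, though, are wrong in both directions, and as literally stated the second one would sink the proof. In the unsatisfiable case, the arithmetic Nullstellensatz of Koll\'ar/Krick--Pardo--Sombra gives $\sum f_i g_i = a$ with $\log|a|$ bounded \emph{single-exponentially} in the number of variables (roughly $d^{O(s)}$ times the input height), not polynomially; so the number of bad primes is $2^{\mathrm{poly}}$, not $\mathrm{poly}$. In the satisfiable case you cannot get density $c\ts\pi(N)$ for an absolute constant $c$: the variety need not have a $\qqq$-point, so one must take an algebraic point whose field of definition $K$ has degree $D$ up to $d^{s}$ (exponential), and effective Chebotarev only gives about $\pi(N)/D$ completely split primes --- an exponentially small density. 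The proof survives because both quantities are single-exponential and one can choose $N = 2^{\mathrm{poly}}$ large enough that $\pi(N)/D$ exceeds twice the Nullstellensatz bound; Goldwasser--Sipser is perfectly happy distinguishing $|S|\ge A$ from $|S|\le A/2$ for exponentially large $A$. Relatedly, your ``solution in $\ff_{p^k}$ for polynomially bounded $k$'' is not how the witness is arranged: a zero-dimensional component can have all its points defined only over extensions of exponential degree, which is why Koiran restricts to primes splitting completely in $K$ so that the reduction lands in $\ff_p$ itself. Finally, ``picking a smooth rational point on a geometrically irreducible component'' glosses over the real work (components need not be geometrically irreducible or defined over $\qqq$, and one must control the degree and discriminant of $K$ polynomially in $\log$ of the relevant quantities for the $\GRH$ error term to be absorbed); this is where Koiran's reduction to a univariate resolvent via a primitive element does the heavy lifting.
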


\smallskip

For the proof, Koiran's needs existence of primes in certain
intervals and with modular conditions, thus the $\GRH$ assumption.
For the proof of Theorem~\ref{t:main-AM}, we need the following
strengthening of Theorem~\ref{t:main-HN} to finite algebraic extensions.
Let
$$
f_1,\ldots,f_m\.\in\. \mathbb{Z}(y_1,\ldots,y_k)[x_1,\dots,x_s]\ts.
$$
The decision problem $\HNP$ (\defn{Parametric Hilbert's Nullstellensatz})
asks if the polynomial system \eqref{eq:HN-system} has a solution over
\ts $\overline{\mathbb{C}(y_1,\ldots,y_k)}$.
In a remarkable recent work, Ait El Manssour, Balaji, Nosan, Shirmohammadi, and Worrell
extended Theorem~\ref{t:main-HN} to~$\HNP:$

\smallskip

\begin{thm}[{\cite[Thm~1]{A+24}}{}]\label{t:main-HNP}
    \. $\HNP$ \ts is in \ts $\AM$ \ts assuming \ts $\GRH$.
\end{thm}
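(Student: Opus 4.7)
The plan is to reduce $\HNP$ to the non-parametric problem $\HN$ and then invoke Koiran's Theorem~\ref{t:main-HN}, using a \emph{generic specialization} of the parameters $y_1,\ldots,y_k$. After clearing denominators, the input becomes a system
\[
F_1 \ts = \ts \cdots \ts = \ts F_m \ts = \ts 0\ts, \qquad F_i \ts \in \ts \zz[y_1,\ldots,y_k][x_1,\ldots,x_s]\ts,
\]
and the central observation is that this system has a solution over $\overline{\cc(y_1,\ldots,y_k)}$ if and only if for \emph{generic} $a \in \cc^k$ (i.e., outside some proper Zariski-closed subset determined by the input), the specialized system $F_i(a,x) = 0$ has a solution over $\cc$.

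This suggests the following $\AM$ protocol. Arthur samples $a \in \{0,1,\ldots,2^N{-}1\}^k$ uniformly at random, where $N$ is polynomial in the input size and chosen large enough that the relevant Zariski-closed ``bad'' subsets of $\cc^k$ are avoided with high probability via the Schwartz--Zippel lemma. Arthur and Merlin then execute Koiran's $\AM$ protocol from Theorem~\ref{t:main-HN} on the specialized instance $F_1(a,x),\ldots,F_m(a,x) \in \zz[x_1,\ldots,x_s]$, and Arthur accepts iff Koiran's protocol accepts. Since $a$ has polynomial bit-length, the specialized input has polynomial bit-size, so Koiran's result applies.

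For \emph{completeness}, if the generic system is solvable then by Chevalley's theorem the image of $V(F_1,\ldots,F_m)\ssu \aA^{k+s}$ under projection to $\aA^k$ is constructible and contains a Zariski-open subset~$U$; for $a \in U$ the specialized system is solvable over $\cc$, and Koiran's protocol accepts with high probability. For \emph{soundness}, if the generic system is unsolvable then by the weak Nullstellensatz there exist $g_i \in \cc(y)[x]$ with $1 = \sum g_i F_i$; clearing denominators yields a nonzero $D \in \zz[y]$ and $G_i \in \zz[y,x]$ with $D(y) = \sum_i G_i(y,x)\ts F_i(y,x)$. Whenever $D(a) \ne 0$, substituting $y = a$ shows $D(a) \in (F_1(a,x),\ldots,F_m(a,x))$ in $\cc[x]$, so the specialized system is unsolvable and Koiran's protocol rejects with high probability.

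The main obstacle will be controlling the degrees (in $y$) of the Zariski-closed ``bad'' sets in the two cases above, so that Schwartz--Zippel succeeds over a sampling range of only polynomially many bits. For soundness this requires an effective Nullstellensatz in the style of Koll\'ar, Jelonek, or Brownawell, giving (at worst) single-exponential bounds on $\deg D$; for completeness, analogous effective Chevalley/elimination-theoretic bounds are required on the non-dominance locus of the projection to the parameter space. Both types of bounds are available in the literature, are polynomial in the degrees of the $F_i$, and yield failure probabilities that can be driven below any constant by adjusting $N$. A minor secondary issue is that the Nullstellensatz certificate $1 = \sum g_i F_i$ is obtained over $\cc(y)[x]$ rather than $\qqq(y)[x]$; however, a standard Galois-theoretic argument lets us take $D$ and the $G_i$ to have integer coefficients without loss of generality.
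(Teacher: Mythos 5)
This theorem is not proved in the paper at all: it is imported verbatim as \cite[Thm~1]{A+24}, so there is no in-paper argument to compare against. Your proposal is therefore best judged against the cited source, whose proof (as the paper describes) re-derives Koiran's number-theoretic argument directly in the parametric setting --- an ``algebraized'' version of \cite{Koiran96} that works with the system over $\zz(y_1,\ldots,y_k)[x_1,\ldots,x_s]$ itself --- rather than invoking Theorem~\ref{t:main-HN} as a black box. Your route is genuinely different: a randomized reduction from $\HNP$ to $\HN$ by integer specialization of the parameters, with Schwartz--Zippel controlling the failure probability, followed by one invocation of Koiran's protocol. The structure is sound: the equivalence ``solvable over $\overline{\cc(\by)}$ iff solvable for generic $a$'' is correct (non-emptiness of the generic fiber is equivalent to dominance of the projection, hence to the image containing a dense open set), the composition of Arthur's preliminary coin flips with Koiran's two-message protocol stays inside $\AM$, and your soundness certificate $D(\by)=\sum_i G_i F_i$ with $D\ne 0$ is exactly the right object.

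What your sketch buys is modularity and brevity; what it costs is that the entire burden lands on the two effective degree bounds you defer to the literature, and these are not symmetric in difficulty. The soundness bound on $\deg_{\by} D$ does follow from the effective Nullstellensatz over $\cc(\by)$ plus Cramer's rule on the resulting linear system, as you say. The completeness direction is the thin spot: you need an \emph{effective Chevalley} statement --- a single-exponential bound on the degree of a hypersurface containing $\aA^k\setminus U$ for some open $U$ inside the constructible image $\pi(V)$ --- and this is a nontrivial elimination-theoretic input (Heintz-style bounds on degrees of constructible images) that a complete writeup must cite precisely and match to the encoding of the input; note also that if the $\deg_{\by} f_i$ are allowed to be exponential in the bit-size of the input, the specialized coefficients $F_i(a,x)$ blow up and the reduction to $\HN$ is no longer polynomial-size, so you must fix a dense (or degree-bounded) encoding. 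None of this is a gap in the logic, but it is precisely the technical content that \cite{A+24} avoids by not passing through a specialization-plus-black-box reduction. Your final remark about descending the certificate from $\cc(\by)$ to $\qqq(\by)$ is correct and needs only linear algebra over the field generated by the coefficients, not Galois theory.
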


\smallskip

The proof of the authors is rather interesting as it substantially
simplified and ``algebraized'' Koiran's original approach in
\cite{Koiran96,Koiran97}, avoiding the use of semi-algebraic geometry.
The background behind \ts $\HNP$ \ts is also very interesting,
as are its computational aspects.  We refer the reader to a \cite{A+24}
for an extensive discussion of this and other related work.

\medskip

\subsection{Schubert vanishing via Hilbert's Nullstellensatz}\label{ss:intro-SV-proof}
We prove Theorem~\ref{t:main-AM} as an application of Theorem~\ref{t:main-HNP}.
More precisely, we prove the following:

\begin{lemma}[{\rm Main lemma}{}] \label{l:SV-HNP}
\. $\neg\SV$ \. reduces to \. $\HNP$ \. in types~$A$, $B$, $C$ and~$D$.
\end{lemma}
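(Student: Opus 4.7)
The plan is to give a polynomial-time construction that on input \ts $(u,v,w)$ \ts produces a polynomial system
\[
F \, = \, \{f_1,\ldots,f_m\} \, \subset \, \mathbb{Z}(y_1,\ldots,y_k)[x_1,\ldots,x_s]
\]
with the property that \ts $c^w_{u,v} \ne 0$ \ts if and only if $F$ has a solution over the algebraic closure \ts $\overline{\mathbb{C}(y_1,\ldots,y_k)}$. This is a many-one reduction from the nonvanishing problem to \ts $\HNP$, equivalently from \ts $\SV$ \ts to the complement of \ts $\HNP$, which is what the lemma claims. The construction rests on the geometric description of Schubert coefficients: by Kleiman's transversality theorem, for a generic triple of (isotropic) flags \ts $F_\bullet,G_\bullet,H_\bullet$ \ts in the appropriate flag variety \ts $G/B$, one has
\[
c^w_{u,v} \, = \, \bigl|\, X_u(F_\bullet) \cap X_v(G_\bullet) \cap X_{w_\circ w}(H_\bullet) \, \bigr|.
\]
Being in generic position is a Zariski-open condition on the flag parameters, so \ts $c^w_{u,v} \ne 0$ \ts translates into satisfiability of the intersection equations once the flag entries are promoted to indeterminates~\ts $y_i$.

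I would first carry out the reduction in type $A$, where \ts $G = \GL_n$ \ts and the complete flag variety is \ts $\mathrm{Fl}(n)$. Parametrize three flags by the column spans of three $n \times n$ matrices with independent parameters \ts $y^{(r)}_{ij}$, for \ts $r = 1,2,3$, giving \ts $k = 3n^2$ \ts parameters. Use affine coordinates \ts $x_{ij}$ \ts on an open Bruhat cell of \ts $\mathrm{Fl}(n)$, giving \ts $s = O(n^2)$ \ts variables. Each Schubert variety \ts $X_u(F_\bullet)$ \ts is cut out set-theoretically by rank conditions
\[
\dim\bigl(E_p \cap F_q\bigr) \, \geq \, r_u(p,q) \quad \text{for all} \quad 1 \leq p,q \leq n,
\]
where \ts $r_u(p,q)$ \ts is the usual combinatorial rank function of~$u$. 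These rank conditions translate to the vanishing of finitely many minors of bounded size in matrices whose entries are polynomials in \ts $x$ \ts and~$y$. Concatenating the minor equations for \ts $X_u$, $X_v$ and \ts $X_{w_\circ w}$ \ts yields a system \ts $F$ \ts of size polynomial in~$n$. An up-front check that \ts $\inv(u) + \inv(v) = \inv(w)$ \ts handles the degree-trivial case: when lengths do not match, \ts $c^w_{u,v} = 0$ \ts automatically, and we emit a tautologically unsatisfiable system such as \ts $\{1\}$.

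For types \ts $B$ \ts and \ts $C$, replace \ts $G$ \ts by \ts $\mathrm{SO}_{2n+1}$ \ts or \ts $\mathrm{Sp}_{2n}$ \ts and restrict to isotropic flags with respect to the defining bilinear form. Generic isotropic flags are realized as \ts $g \cdot F^{\circ}_\bullet$ \ts for a generic \ts $g \in G$ \ts acting on a fixed standard flag, where \ts $g$ \ts is encoded by \ts $O(n^2)$ \ts parameters subject to the polynomial defining equations of~$G$. Schubert varieties in isotropic flag varieties again admit set-theoretic determinantal descriptions via rank conditions on pairwise intersections with the fixed flag, with the relevant rank data encoded by the signed permutation (cf.\ the pipe-dream description in \cite{ST23}), so the construction proceeds exactly as in type~$A$ and the resulting system is of polynomial size.

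The main technical point---and the source of the type \ts $D$ \ts exclusion---is the faithful set-theoretic translation of Schubert varieties into the polynomial system. In types \ts $A$, $B$ and~$C$, the relevant Schubert varieties are irreducible and are cut out set-theoretically by the rank conditions described above, which is all that is needed for satisfiability over \ts $\overline{\mathbb{C}(y)}$ \ts to coincide with non-emptiness of the generic intersection. In type \ts $D$, the orthogonal Grassmannian of maximal isotropic subspaces splits into two connected components (the two families of spinor planes), and certain Schubert varieties indexed by even-signed permutations cannot be recovered set-theoretically from rank conditions alone, requiring additional Pfaffian-type equations. The naively constructed system would then be satisfied by points in the ``wrong'' component, breaking the equivalence ``$c^w_{u,v} \ne 0 \Leftrightarrow$ system is satisfiable''; circumventing this requires a genuinely different setup, which is outside the reach of the present reduction.
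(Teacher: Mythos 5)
Your overall strategy matches the paper's: use Kleiman transversality to recast \ts $c^w_{u,v}\ne 0$ \ts as non-emptiness of a triple intersection of generically translated Schubert varieties, promote the flag entries to parameters, and hand the resulting system to \ts $\HNP$. But the step where you actually produce the polynomial system is where your argument breaks. You cut out each Schubert variety by the rank conditions \ts $\dim(E_p\cap F_q)\ge r_u(p,q)$ \ts and assert that these ``translate to the vanishing of finitely many minors of bounded size,'' yielding a system of polynomial size. That is not true: a condition of the form \ts $\mathrm{rank}\ts M\le r$ \ts requires the vanishing of all \ts $(r+1)\times(r+1)$ \ts minors of~$M$, and since $r$ grows with~$n$, the number of such minors is exponential in~$n$ in general. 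The paper explicitly flags this obstacle (``the classical determinantal formulation \ldots may involve exponentially many equations'') and avoids it by using the Hein--Sottile \emph{lifted formulation}: Stiefel coordinates for one cell together with auxiliary variables \ts $\bal,\bbe$ \ts and the characterization in Proposition~\ref{prop:liftInt}, which expresses membership in a Schubert cell by \ts $O(n^2)$ \ts \emph{bilinear} equations. Without that device (or some substitute, e.g.\ introducing a rank factorization \ts $M=AB$ \ts with new variables --- which you do not do), your reduction is not polynomial-time, and the lemma is not proved.

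Two smaller points. First, your diagnosis of the type \ts $D$ \ts failure (two components of the orthogonal Grassmannian, Pfaffian equations) is in the right spirit but is not the paper's obstruction: in the lifted framework the issue is that enforcing \ts $\det(\omega)=1$ \ts in \eqref{eq:soForm} is a single equation of exponential size, a non-local parity constraint. Second, your up-front check \ts $\inv(u)+\inv(v)=\inv(w)$ \ts is a sensible (and arguably necessary) addition that the paper leaves implicit, since otherwise the generic intersection can be nonempty of positive dimension while the structure constant vanishes for degree reasons.
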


The lemma, combined with Theorem~\ref{t:main-HNP} immediately implies Theorem~\ref{t:main-AM}.
Let us emphasize that the vanishing of Schubert coefficients in type~$D$
stated in Theorem~\ref{t:main-AM}, but is not included in the main part
of the paper.  The reason is that the reduction in proof of the lemma is both
delicate and technical, and does not go through in type~$D$, see
Remark~\ref{rem:type-D}.

In type~$D$, we obtain the result in Lemma~\ref{l:SV-HNP-app}.
For the proof, we introduce a different approach in the Appendix~\ref{app:typeD}
(joint with David Speyer).  To clarify the strength of this new
approach, we present the construction uniformly for all classical types.

\medskip

\subsection{Proof outline}\label{ss:intro-sketch}
%
% \defn{\rm \bf IP: these two parts from the old version and a new part will
% need to be merged together.  Colleen, do what you think is best and I will
% edit it all later.}

\medskip

The proof of the Lemma~\ref{l:SV-HNP} is somewhat technical and based on
prior constructions.
% It is very far from self-contained and likely inaccessible to readers unfamiliar with
% a great deal of background in Algebraic Geometry and Algebraic Combinatorics.
But at the end, we produce an explicit polynomial system \eqref{eq:HN-system}
with polynomially many parameters.

Our starting point is an equivalent algebro-geometric definition of Schubert coefficients,
arguably the closest to Schubert's original work.  We use \emph{Kleiman's transversality theorem}
\cite{Kleiman} to interpret Schubert coefficients \ts $\{c_{u,v}^w\}$ \ts as counting the number
of points in the intersection of corresponding generically translated Schubert varieties.
This is a well-known approach, both in Algebraic Geometry \cite{AF} and in
computational applications \cite{HS17}.

From there, the main issue is to formulate the transversality condition
by adding new variables (parameters) and explicit polynomial equations.  Following
the original approach in \cite{HS17} in type~$A$, we obtain and analyze the
so called \defng{lifted formulations} for each type.
These polynomial systems are relatively clean in type~$A$, see Section~\ref{sec:liftA}
and an example in~$\S$\ref{app:Schub}.  In Section~\ref{sec:liftBC}, we present
lifted formulations in types~$B$ and~$C$.
% These systems are more involved
% and are based in part on the \emph{Bruhat decomposition}.

A lifted formulation in type~$D$ similar to that in other classical types
is given in Appendix~\ref{sec:liftD}.  Unfortunately, that formulation does
not prove Lemma~\ref{l:SV-HNP} in type~$D$.  This is because one of the
equations is an $n\times n$ determinant evaluation: \ts $\det(X)=1$, and
which has exponentially many terms.  Recall that systems in $\HN$ and $\HNP$
must have polynomially many equations, each with polynomially many terms;
See also~$\S$\ref{ss:finrem-Pur} and Remark~\ref{rem:type-D}.
Still, the systems constructed in Appendix~\ref{sec:liftD} turned out
to be useful for results in Appendix~\ref{app:BSS}.

In Appendix~\ref{app:typeD}, we prove Lemma~\ref{l:SV-HNP} in type~$D$
(see Lemma~\ref{l:SV-HNP-app}).  We present an alternative construction
of lifted formulations in all classical types, including type~$D$.
This section is joint with David Speyer.

Without going to details, lifted formulations define membership in a Schubert variety
as a system of bilinear equations. They are more efficient than the primal–dual
formulation in \cite{HHS16} (cf.\ \cite{L+21}).  It is a minor miracle that
one can avoid determinantal equations and obtain the desired
polynomial size systems of polynomials in the three classical types
as in the lemma.  Finally, we note that we are using the
conventions in \cite{AIJK} which are different from those in~\cite{HS17}.

\medskip

\section{Discussion and implications} \label{s:disc}

To clarify the importance of our results, let us recall three
somewhat related problems.

\subsection{Comparison to other problems} \label{ss:disc-comp}
To better explain how surprising the results are in this paper, let
us recall complexity results on related problems.

\medskip

\nin
{\small $(i)$} {\bf \defna{Characters of the symmetric group}.} % \label{sss:intro-char}
Let \ts $\chi^\la$ \ts denote irreducible characters of the symmetric
group $S_n$, where \ts $\la\vdash n$ \ts is a partition of~$n$.
They play the same role that Schur functions play in the representation
theory of $\GL_n(\cc)$.  The character values \ts $\chi^\la(\mu)$, where
$\la,\mu\vdash n$, are integers.  As a function, $\chi$ \ts is \ts $\GapP$-complete
even when the input is in unary, see \cite{Hep94}.

In \cite{IPP24}, Ikenmeyer, Pak and Panova showed that squared characters \ts
$\big(\chi^\la(\mu)\big)^2$ \ts are not in \ts $\SP$ \ts unless \ts $\PH=\Sigmap_2\ts$.
This is the analogue of Conjecture~\ref{conj:main-Schubert-SP}.  This was done
by proving that the \defn{character vanishing problem} \ts $\{\chi^\la(\mu) =^? 0\}$ \ts is $\CEP$-complete.
By Tarui's theorem (see~$\S$\ref{ss:CS-count}), this shows that the character vanishing
problem is not in $\PH$, unless $\PH$ collapses to a finite level.

\medskip

\nin
{\small $(ii)$} {\bf \defna{Defect of Stanley's poset inequality}.} % \label{sss:intro-Stanley}
Let $P=(X,\prec)$ be a finite poset on $n$ elements, and let $x,z_1,\ldots,z_k\in X$ be fixed
distinct elements. Fix distinct integers \ts $c_1,\ldots,c_k \in \{1,\ldots,n\}$.
\defn{Linear extensions} of $P$ are order-preserving bijections \ts $f: X\to \{1,\ldots,n\}$.
They generalize \emph{standard Young tableaux} of both straight and skew shape.

Let $N(a)$  denote the number of linear extensions $f$ of $P$,
s.t.\ $f(z_i)=c_i$ and $f(x)=a$.
Famously, Stanley's poset inequality gives log-concavity of $\{N(a)\}$:
$$N(a)^2 \, \ge \, N(a+1) \cdot N(a-1).
$$
Denote \ts $\de(a):= N(a)^2 - N(a+1) \cdot N(a-1)$ \ts the defect of this inequality.
It follows from \cite{BW91} that $\de$ is $\GapP$-complete.

Similarly to the example above, for $k\ge 2$, it was shown by Chan and Pak \cite{CP-AF},
that \ts $\de$ \ts is not in \ts $\SP$ \ts unless \ts $\PH = \Sigmap_2$.  This was done
by proving that the \defn{Stanley Vanishing problem} \ts $\{\de(a) =^? 0\}$ \ts
is not in \ts $\PH$ \ts unless \ts $\PH$ \ts collapses to the finite level.
In particular, the authors showed that the Stanley Vanishing problem is not in
\ts $\coNP$ \ts unless \ts $\PH = \Sigmap_2$, similar to the claim in
Conjecture~\ref{conj:main-Schubert-vanishing}.  See also~\cite{CP-coinc,CP-SY} for a
similar approach to a other \emph{coincidence problems}.

Rather remarkably, it remains open if the Stanley Vanishing problem is $\CEP$-complete
for $k\ge 2$, as the proof bypasses this problem.
Curiously, and in the opposite direction, for \ts $k\in \{0,1\}$ \ts
it was shown in \cite{SvH-acta} and \cite{CP-AF}
that the Stanley vanishing problem is in~$\poly$.  \defng{Pak's Conjecture} states
that the defect $\de$ is not in $\SP$ in these cases as well \cite[Conj.~6.3]{Pak-OPAC}.
At the moment, there are no known tools to establish this result.

\medskip

\nin
{\small $(iii)$} {\bf \defna{Kronecker coefficients}.} % \label{sss:intro-Kron}
Let \ts $\la,\mu,\nu\vdash n$ \ts be partitions of~$n$.
Denote by \ts $g(\la,\mu,\nu)$ \ts the
\defn{Kronecker coefficients} defined as follows:
$$
g(\la,\mu,\nu) \, := \, \<\chi^\la\chi^\mu,\chi^\nu\> \, = \, \frac{1}{n!} \.
\sum_{\si \in S_n} \. \chi^\la(\si) \. \chi^\mu(\si)\. \chi^\nu(\si)\..
$$
By definition, \ts $g(\la,\mu,\nu)\in \nn$.  Kronecker coefficients
are the structure constants in the ring of $S_n$ characters, and thus
play the same role that LR-coefficients play for $\GL_n(\cc)$ characters.

Kronecker coefficients are closely related to Quantum Computing and appear naturally in
Geometric Complexity Theory, see e.g.\ \cite{Aar16,Mul09}.  They can also
be defined for all types and are known to be hard to compute and to
analyze \cite{CDW12,PP17}.

Finding a combinatorial interpretation for Kronecker coefficients remains a major open problem
in Algebraic Combinatorics first posed by Murnaghan in 1938, see
\cite[Problem~10]{Sta00}.  It was conjectured by the first author
\cite[Conj.~9.1]{Pak-OPAC},
that \ts $g$ \ts is not in~$\SP$, a conjecture that remains wide open.

It was shown by Ikenmeyer, Mulmuley and Walter in \cite{IMW17},
that $g$ is $\SP$-hard, while the Kronecker Vanishing Problem \ts
$\{g(\la,\mu,\nu) =^? 0\}$ \ts is  $\coNP$-hard, even when the input
is in unary.  In a different direction, it was shown by
Ikenmeyer and Subramanian  \cite{IS23}, that $g$ is in $\SBQP$.
Similarly, the vanishing problem was shown in $\coQMA$ by Bravyi et al.\
\cite{B+24}.  As pointed out in \cite[Rem.~1.2]{IS23},
these results suggest that proving the Kronecker Vanishing problem is
$\CEP$-complete (as was done in \cite{IPP24} for characters),
would be difficult.  Indeed, the authors note that this would
imply \ts $\NQP \subseteq \QMA$, an open problem in Quantum Computing
\cite{KMY09}, cf.~$\S$\ref{ss:CS-more}.

\medskip

\subsection{Connections and implications} \label{ss:disc-imply}
There are several ways to think of results in the paper.

\medskip

\nin
{\small (0)} \ts  Most straightforwardly,
we showed that the vanishing of Schubert coefficients is in $\coAM$, i.e.\
relatively low in the polynomial hierarchy (Theorem~\ref{t:main-AM}).
If one assumes Conjecture~\ref{conj:main-Schubert-vanishing}, this
upper bound is close to optimal.
% Similarly, the vanishing of  Schubert coefficients is in \ts $\NP_\cc \cap \P_\rr$ in
% the BSS model of computation (Theorems~\ref{t:main-SV-decision} and~\ref{t:main-SV-BPP}),
% are likely to be optimal results in that model.
% Finally, although we do not have a completeness result,
% the inclusion \ts $\SC \in \SP_\cc$ (Theorem~\ref{t:main-Schubert-counting})
% is likely to be optimal by Conjecture~\ref{conj:SC-unary}.

\medskip

\nin
{\small (1)} \ts
Looking at the complicated sign-reversing structure of \eqref{eq:PS}
and assuming that the Schubert--Kostka
numbers are $\SP$-complete (cf.\ Conjecture~\ref{conj:SC-unary}),
one would naturally assume positive and
negative terms are ``independent'' \ts $\SP$-functions,
cf.~$\S$\ref{ss:disc-comp}{\small $(i)$}.
This would imply that the Schubert vanishing problem is $\CEP$-complete,
and prove Conjecture~\ref{conj:main-Schubert-vanishing-coNP}.
By Tarui's theorem (see $\S$\ref{ss:CS-count}),
this would imply that \ts $\SV\notin \PH$ \ts unless \ts $\PH$
\ts collapses to a finite level.  Main Theorem~\ref{t:main-AM}
implies that this independence assumption is flawed and
that in fact \ts $\SV \in \PH$ (assuming $\GRH$).
%See more on this in~$\S$\ref{ss:disc-imply}{\small $(2)$}.

% \medskip
%
% \nin
% {\small (1)} \ts  One can think of our results as an attempt to \emph{disprove}
% Conjecture~\ref{conj:main-Schubert-SP}, i.e.\ to prove that Schubert
% coefficients are in~$\SP$.  Class $\SP_\cc$ in Theorem~\ref{t:main-Schubert-counting}
% is a somewhat bigger than~$\SP$, but one can argue that it already gives
% a ``combinatorial interpretation'' in that model.  This is similar
% to the $\SBQP$ upper bound for Kronecker coefficients that comes relatively
% close to~$\SP$.

\medskip

\nin
{\small (2)} \ts  Another way to think about our results is an effort to
show that Conjecture~\ref{conj:main-Schubert-SP} \emph{cannot be proved} by
showing that the Schubert vanishing problem is not in~$\PH$.   This would
be a natural approach to the problem, similar to that in {\small $(i)$}
and {\small $(ii)$} discussed above.  All of this is assuming both the $\GRH$
and a non-collapse of $\PH$, of course. % two million dollar problems.
This is a much stronger argument than the implication
of an open problem in quantum computing discussed at
the end of~$\S$\ref{ss:disc-comp}{\small $(iii)$}.

Note that the \emph{relativization argument} \ts employed in
\cite{IP22} to prove that some functions are not in $\SP$
is inapplicable in this setting.  Thus, Theorem~\ref{t:main-AM}
implies that there are currently no other tools to attack
Conjecture~\ref{conj:main-Schubert-SP}.  The same applies to Pak's
conjecture in~$\S$\ref{ss:disc-comp}{\small $(ii)$},
 that the defect of Stanley's inequality for $k=0$, is not in~$\SP$.

\medskip

\nin
{\small (3)} \ts
% We can also start imagining possible complexity worlds, in the style of Impagliazzo \cite{Imp95}.
There is a plausible argument in favor of the \emph{derandomization assumption} \ts $\BPP=\P$,
see \cite{Wig23} partly motivated by \cite{IW97}.\footnote{The poll in~\cite{Gas19} reports that
about $98\%$ of ``experts'' and about $60\%$ of ``non-experts'' believe that $\P=\BPP$. Since
% ``experts'' had the opposite opinion before, and
the problem has been open for over 60 years,
it is worth considering all possibilities.
}
Several versions of this assumption imply that \ts $\AM = \NP$ \ts \cite{KvM02,MV05},
and disprove Conjecture~\ref{conj:main-Schubert-vanishing}.\footnote{We explore this direction
in a short followup paper \cite{PR24b} written from a combinatorial point of view.}
As we mentioned above, this by itself does not disprove Conjecture~\ref{conj:main-Schubert-SP};
cf.\ discussion of Pak's conjecture in~$\S$\ref{ss:disc-comp}{\small $(ii)$}.
In the opposite direction, there are also some important indications against
derandomization assumptions, see e.g.\ discussion in~\cite[$\S$4.3]{SY09}.  Notably, it is unlikely
that \ts $\HN\in \NP$ \ts as discussed in \cite[$\S$6]{Koiran96} (preprint version).

% \medskip
%
%
% \nin
% {\small (4)} \ts  In a similar manner, consider an assumption that the
% Boolean part of the BSS counting hierarchy $\CH_\cc$ is not more powerful than $\CH$.
% Under this assumption, Theorem~\ref{t:main-Schubert-counting} disproves
% Conjecture~\ref{conj:main-Schubert-SP}.  Of course, this assumption is
% even less implausible (cf.\ Remark~\ref{r:SV-BPP}).

\medskip

\nin
{\small (4)} \ts Although the underlying algebraic properties of Kronecker and Schubert
coefficients are unrelated, they both generalize LR--coefficients whose vanishing problem
is in~$\P$.  There are other connections between complexity classes
of best upper bounds $\QMA$ and $\AM$ for the vanishing problems of Kronecker and Schubert
coefficients, respectively (see Bravyi et  al.\ \cite{B+24} and our Theorem~\ref{t:main-AM}).
For example, if \ts $\BQP \subseteq \AM$ \ts then \ts $\PH = \Sigmap_2$ \cite{Aar10}.
Of course, \ts $\BQP\subseteq \QMA$ \ts are not believed to be in $\PH$, see e.g.\ \cite{RT22}.

\medskip

\nin
{\small (5)} \ts
Main Theorem~\ref{t:main-AM} also shows that \ts $\SV$ \ts is not $\NP$-complete
(assuming $\GRH$), as this would imply a collapse in the polynomial hierarchy to
the second level: \ts $\PH = \Sigmap_2$ \ts \cite[Thm~2.3]{BHZ87}.

%%%%%%%%%%%%%%%%%%%%%%%%%%%%%%%%%%%%%%%%%%%%%%%%%%%%%%%%%%%%%%%%%%%%%%%%%%%%%%%%%%%%%%%%%%%%%%%%%%%%%%
%%%%%%%%%%%%%%%%%%%%%%%%%%%%%%%%%%%%%%%%%%%%%%%%%%%%%%%%%%%%%%%%%%%%%%%%%%%%%%%%%%%%%%%%%%%%%%%%%%%%%%
%%%%%%%%%%%%%%%%%%%%%%%%%%%%%%%%%%%%%%%%%%%%%%%%%%%%%%%%%%%%%%%%%%%%%%%%%%%%%%%%%%%%%%%%%%%%%%%%%%%%%%
%%%%%%%%%%%%%%%%%%%%%%%%%%%%%%%%%%%%%%%%%%%%%%%%%%%%%%%%%%%%%%%%%%%%%%%%%%%%%%%%%%%%%%%%%%%%%%%%%%%%%%
%%%%%%%%%%%%%%%%%%%%%%%%%%%%%%%%%%%%%%%%%%%%%%%%%%%%%%%%%%%%%%%%%%%%%%%%%%%%%%%%%%%%%%%%%%%%%%%%%%%%%%
%%%%%%%%%%%%%%%%%%%%%%%%%%%%%%%%%%%%%%%%%%%%%%%%%%%%%%%%%%%%%%%%%%%%%%%%%%%%%%%%%%%%%%%%%%%%%%%%%%%%%%
%%%%%%%%%%%%%%%%%%%%%%%%%%%%%%%%%%%%%%%%%%%%%%%%%%%%%%%%%%%%%%%%%%%%%%%%%%%%%%%%%%%%%%%%%%%%%%%%%%%%%%
%%%%%%%%%%%%%%%%%%%%%%%%%%%%%%%%%%%%%%%%%%%%%%%%%%%%%%%%%%%%%%%%%%%%%%%%%%%%%%%%%%%%%%%%%%%%%%%%%%%%%%
%%%%%%%%%%%%%%%%%%%%%%%%%%%%%%%%%%%%%%%%%%%%%%%%%%%%%%%%%%%%%%%%%%%%%%%%%%%%%%%%%%%%%%%%%%%%%%%%%%%%%%

\medskip

\section{Background in computational complexity}\label{s:CS}

We refer to \cite{AB09,Pap94} for
the definitions and standard results, and to \cite{Aar16,Wig19} for further
background.  For the BSS computational model and basic results on the corresponding
polynomial and counting hierarchies, see \cite{BCSS98,BC13}.  Below we give a
somewhat informal and incomplete reminder of standard computational complexity classes.
%  aimed at
% nonexperts interested in the subject.
% These should \emph{not} be viewed as
% proper definitions, but rather a reminder of the material.

%\smallskip

\subsection{Basic notions and the polynomial hierarchy}
\label{ss:CS-basis}
The notion of a \defn{combinatorial object} can be viewed as follows.
A \defn{word} is a binary sequence $x\in \{0,1\}^\ast$.
The \defn{size} of $x$ is the length~$|x|$.  In other words,
combinatorial objects of size~$N$ are encoded by words
of length~$N$, which in turn correspond to integers
\ts $0\le a <2^N$.

The \defn{language} as a subset of words: $L\subseteq\{0,1\}^*$.
For example, the language of words which encode all
Hamiltonian graphs can be defined as collection of pairs $(G, C)$
where $G=(V,E)$ is a simple graph, \ts $E\subseteq \binom{E}{2}$,
and $C\subseteq E$ is a Hamiltonian cycle in~$G$.  Language $L$ is called a
\defn{polynomial} \ts if the membership problem \ts $x\in^? L$ \ts
can be decided in time polynomial in the size of~$x$ (\defng{poly-time}).

Class $\P$ is a class of decision problems which can be decided in polynomial
time.  For example, deciding if a graph is connected is in~$\P$.
Class $\PSPACE$ is a class decision problems which can be decided in
polynomial space.

Fix a polynomial language~$L$.
Class $\NP$ is a class of problems of the type: given prefix~$y$,
does there exist suffix~$z$, s.t.\ $yz \in L$?  For example, given
a simple graph~$G=(V,E)$, the problem whether $G$ has a Hamiltonian
cycle is in~$\NP$.
Class $\coNP$ is a class of problems opposite to $\NP$, i.e.\
given prefix~$y$, is it true that for all~$z$, we have $yz \notin L$?
For example, deciding if graph~$G$ is non-Hamiltonian is in~$\coNP$.
To see this, rephrase the problem in the positive: decide if for every subset
of edges $C\subseteq E$, we have $C$ is \emph{not} a Hamiltonian cycle in~$G$.

Polynomial hierarchy $\PH$ is a union of classes $\Sigmap_m$ and $\Pip_m$ of
the form
$$\aligned
\exists x_1 \. \forall  x_2  \. \exists x_3 \. \ldots \.  \exists (\forall) \ts x_m \ : \ \,& x_1x_2x_3 \cdots x_m \ts \in^? \ts L, \quad \text{and}\\
\forall x_1 \. \exists  x_2 \. \forall x_3 \. \ldots  \. \forall (\exists) \ts x_m \ : \ \, & x_1x_2x_3 \cdots x_m \ts \in^? \ts L\ts,
\endaligned
$$
respectively.  It is important here that $m$ is fixed.  Note that $\Sigmap_1=\NP$,
$\Pip_1 = \coNP$, and $\PH \subseteq \PSPACE$.
It is known that \ts $\BPP \subseteq \Sigmap_2 \cap \Pip_2$ \ts and
thus very low in the polynomial hierarchy.

For a class \ts {\sf X}, we write \ts {\sf X}-complete \ts for the class of
\defn{complete problems} in \ts {\sf X}, i.e.\ problems {\sc A} $\in$ {\sf X} \ts such that every
problem {\sc B} $\in $ {\sf X} \ts reduces to~{\sc A}.  Not all classes are known to have
complete problems.  Famously, {\sc 3SAT} and {\sc Hamiltonicity} are $\NP$-complete.
For two classes {\sf X} and $\sf Y$, we write \ts $\text{\sf X}^{\text{\sf Y}}$ \ts for a
class of problems which can be solved by a solver in {\sf X} with an oracle~{\sf Y}.

\smallskip

\subsection{Counting problems} \label{ss:CS-count}
Fix a polynomial language~$L$.
Class $\SP$ is a class of problems of the type:  compute the number of
suffixes~$z$ with a given prefix~$y$, such that \ts $yz \in L$.  For example,
compute the number of Hamiltonian cycles $C$ in a given simple graph~$G$.
Class $\PP$ is the corresponding class of decision problems,
whether the number above is greater than a given integer~$k$.

Class $\FP$ is a subclass of $\SP$ of counting problems which can be solved
in poly-time.  Class $\FPSPACE$ is a
class of counting problem which can be solved in polynomial space.
Clearly, \. $\FP \ts \subseteq \ts \SP  \ts \subseteq \ts \FPSPACE\ts$.
% Classes \ts {\sf C$_m$P} \ts are
Counting hierarchy \ts $\CH$ \ts is a union of
classes of decision problems corresponding to classes $\Sigmap$,
whether the number of solutions is~$\ge k$.  Clearly, \ts
$\PP \ts \subseteq \ts \CH\ts \subseteq \ts \PSPACE$.

A \ts \defn{$\SP$ function} \ts $f: \{0,1\}^\ast \to \nn$ \ts
is a counting function corresponding to a problem in~$\SP$.
We write \ts $f\in \SP$ \ts in this case.
Let \. $\GapP: = \SP - \SP$ \. denote set of differences of
such functions, and let \. $\GapPP$ \. denote the set of
nonnegative functions in \ts $\GapP$.  For example,
Schubert coefficients, Kronecker coefficients and the
defect of Stanley's inequality (see~$\S$\ref{ss:disc-comp}),
are all in \ts $\GapPP\ts$.

\defn{Toda's theorem}, see e.g.\ \cite{AB09,Gol08,Pap94},
says that \ts $\PH \subseteq \P^{\SP}$.
% In other words, a $\SP$-complete
% function can be used polynomially many times to solve every problem
% in~$\PH$ in poly-time.
%
It implies that a $\SP$-complete
function is not in~$\PH$ unless \ts $\PH = \Sigmap_m$ \ts for some~$m$,
i.e.\ the polynomial hierarchy \defng{collapses}.
\defn{Tarui's theorem} \cite{Tar91} states that \ts $\PH \subseteq \NP^{\CEP}$.
The proof is an easy argument based on Toda's theorem, see e.g.\ \cite{IP22}.

Complexity class \ts $\CEP$ \ts
% \, := \, \big\{f(x)=^? g(y) \ : \ f,g \in \SP \ \, \text{and} \ \, x, \ts y\in \{0,1\}^\ast\big\}\ts.
% $$
is the class of decision problems whether two $\SP$-functions have equal values
on two given inputs.  Clearly, \ts
$\coNP \subseteq \CEP \subseteq \PSPACE$ \ts where the first inclusion follows
since one can take \ts $g\equiv 0$.
For a function \ts $f\in \SP$, the corresponding \defn{coincidence problem} \ts
is defined as
$$
\text{\sc C}_f \, := \, \big\{f(x)=^? f(y) \ : \ x, \ts y\in \{0,1\}^\ast\big\}\ts.
$$
Clearly, \. $\text{\sc C}_f \ts \in \ts \CEP$.
It follows from Tarui's theorem, that if \ts $\text{\sc C}_f$ \ts
is \ts $\CEP$-complete, then we have \ts $\big(f(x)-f(y)\big)^2\notin \SP$ \ts
unless \ts $\PH=\Sigmap_m$ \ts for some~$m$, see \cite{CP-coinc,IP22}.

\smallskip

\subsection{More involved complexity classes}
\label{ss:CS-more}

Class $\BPP$ is defined as a class of decision problems which can
be decided in \defn{probabilistic polynomial time}.  In other words,
in poly-time the problem \ts $\{x\in^? L\}$ \ts can be decided
with probability of error at most $\frac{1}{3}$.  This probability
can be amplified by a repeated application of the algorithm if necessary.

If $\BPP$ is a probabilistic version of $\P$, then $\EBPP$ is defined
as probabilistic version of $\NP$.  In this case the decision problem
asks if there is a word in the \ts \defn{$\BPP$ \ts language}, i.e.\ a
language whose membership problem \ts $\{x\in^? L\}$ \ts is in $\BPP$.

One can view complexity problems as \defng{interactive proofs}, with
quantifiers describing communications between prover and verifier.
When we have a $\BPP$ verifier, i.e.\ when the verifier (Arthur) has powers to
flip coins and the prover (Merlin) has unlimited computational powers.
such communication is called the \defn{Arthur--Merlin protocol}.
The number of quantifiers becomes the number of messages between the
prover and the verifier.

The complexity class \ts $\AM$ \ts is a class of decision problems that
can be decided in polynomial time by an Arthur--Merlin protocol with
two messages, see e.g.\ \cite[$\S$8.2]{AB09}.  Recall the inclusions
$$
\NP \ \subseteq \ \EBPP  \ \subseteq \ \MA  \ \subseteq \ \AM  \
\subseteq \ \Pip_2 \ \subseteq \ \PH\ts.
$$
Here $\MA$ is a standard but slightly different Arthur--Merlin
protocol with two messages that we will not need.
%
% Additionally, we have \ts $\AM \subseteq \NPpoly$, which follows by
% the argument in the proof of \emph{Adleman's theorem} (see e.g.\ \cite[Thm~7.14]{AB09}).
% Here \ts $\NPpoly$ \ts is a variation on the class $\NP$, where the
% problems can be solved with polynomial time Boolean circuits which
% can depend on the size of the problem.\footnote{Somewhat surprisingly,
% \ts $\NPpoly$ \ts a much larger class.}
% of problem
%
Famously, \defng{graph isomorphism} is in $\coAM$, since graph non-isomorphism
can be established by a simple interactive protocol (see e.g.\ \cite[Thm~8.13]{AB09}).

Finally, in Quantum Computing, one considers \emph{quantum Turing machines}.
Class $\BQP$ is the class of decision problems which can be solved in
polynomial time on such machines with probability of error at most \ts $\frac13\ts$.
This class should be viewed as analogous to~$\BPP$.  Similarly,
class $\QMA$ is defined as analogous to class~$\MA$.   Finally, class $\NQP$
is a quantum analogue of $\NP$, and it is known that \ts $\NQP=\coCEP$ \ts \cite{FGHP98}.

% \vskip.3cm
%
% From this point on, we are assuming the reader is familiar with basic
% complexity theory, standard complexity classes and relationships between them:
% \. $\P$, \. $\BPP$, \. $\NP$, \. $\AM$, \. $\PH$,  \. $\FP$, \. $\SP$, \.
% $\GapP$, \. $\CH$, \. $\BQP$ \. and \. $\QMA$.
%
% \newpage

%%%%%%%%%%%%%%%%%%%%%%%%%%%%%%%%%%%%%%%%%%%%%%%%%%%%%%%%%%%%%%%%%%%%%%%%%%%%%%%%%%%%%%%%%%%%%%%%%%%%%%
%%%%%%%%%%%%%%%%%%%%%%%%%%%%%%%%%%%%%%%%%%%%%%%%%%%%%%%%%%%%%%%%%%%%%%%%%%%%%%%%%%%%%%%%%%%%%%%%%%%%%%
%%%%%%%%%%%%%%%%%%%%%%%%%%%%%%%%%%%%%%%%%%%%%%%%%%%%%%%%%%%%%%%%%%%%%%%%%%%%%%%%%%%%%%%%%%%%%%%%%%%%%%
%%%%%%%%%%%%%%%%%%%%%%%%%%%%%%%%%%%%%%%%%%%%%%%%%%%%%%%%%%%%%%%%%%%%%%%%%%%%%%%%%%%%%%%%%%%%%%%%%%%%%%
%%%%%%%%%%%%%%%%%%%%%%%%%%%%%%%%%%%%%%%%%%%%%%%%%%%%%%%%%%%%%%%%%%%%%%%%%%%%%%%%%%%%%%%%%%%%%%%%%%%%%%
%%%%%%%%%%%%%%%%%%%%%%%%%%%%%%%%%%%%%%%%%%%%%%%%%%%%%%%%%%%%%%%%%%%%%%%%%%%%%%%%%%%%%%%%%%%%%%%%%%%%%%
%%%%%%%%%%%%%%%%%%%%%%%%%%%%%%%%%%%%%%%%%%%%%%%%%%%%%%%%%%%%%%%%%%%%%%%%%%%%%%%%%%%%%%%%%%%%%%%%%%%%%%
%%%%%%%%%%%%%%%%%%%%%%%%%%%%%%%%%%%%%%%%%%%%%%%%%%%%%%%%%%%%%%%%%%%%%%%%%%%%%%%%%%%%%%%%%%%%%%%%%%%%%%
%%%%%%%%%%%%%%%%%%%%%%%%%%%%%%%%%%%%%%%%%%%%%%%%%%%%%%%%%%%%%%%%%%%%%%%%%%%%%%%%%%%%%%%%%%%%%%%%%%%%%%

\medskip

\section{Definitions, notation, and geometric setup} \label{s:setup}

\subsection{Standard notation} \label{ss:setup-notation}
We use \ts $\nn=\{0,1,2,\ldots\}
$ \ts and \ts $[n]=\{1,\ldots,n\}$.  We use $[a,b]$ to denote the interval \ts
$\{a,a+1,\ldots,b\}\ssu \zz$.
From this point on, unless stated otherwise, the underlying
field is always~$\cc$.

We use \ts $e_1,\ldots,e_n$ \ts to denote the standard
basis in~$\cc^n$, and $\zero$ to denote zero vector.
We use bold symbols such as \ts $\bx$, $\by$, $\bal$, $\bbe$ \ts
to denote sets and vectors of variables, and bars such as \ts $\ov x$ \ts and \ts $\ov y$,
to denote complex vectors.
We say that \ts $A=(a_{ij})$ \ts is \emph{symmetric} \ts if
\ts $a_{ij}=a_{ji}$ \ts for all \ts $i,j\in [n]$.
Similarly, we say that \ts $A=(a_{ij})$ \ts is \emph{skew-symmetric} \ts if
\ts $a_{ij}=-a_{ji}$ \ts for all \ts $i,j\in [n]$.

Recall the following standard notation for \emph{complex reductive simple Lie groups}.
We have the \emph{general linear group} ${\sf GL}_n(\cc)$, the \emph{odd special orthogonal group}
${\sf SO}_{2n+1}(\cc)$, the \emph{symplectic group} ${\sf Sp}_{2n}(\cc)$ and
the \emph{even special orthogonal group}  ${\sf SO}_{2n}(\cc)$.  These groups
correspond to \emph{root systems} $A_n$, $B_n$, $C_n$ and $D_n$, respectively,
and are called  \emph{groups of type}~$A$, $B$, $C$ and~$D$, respectively.

To distinguish the types, we use parentheses in Schubert coefficients, e.g.\
\ts $c^w_{u,v}(A)$, and superscript in other cases, e.g.\ $X_u^A$.  We omit
the dependence on the type when it is clear from the context.  We use bullets
(both in subscript and superscript) to denote flags, e.g.\ \ts $F_\bu$ \ts and \ts $E^\bu$.
%This is a (somewhat distractive) standard notation in the area.

\medskip

\subsection{Geometric and Algebraic Combinatorics notation}\label{ss:setup-Schubert}
Throughout the paper we follow conventions of \cite{AF,AIJK}.  We assume
the reader is familiar with some basic notions and results in Algebraic Geometry,
Lie theory and specifically Schubert theory.  Below we recall some key notation
that we use throughout the paper.

Let \ts ${\sf G}$ \ts be a \emph{complex reductive Lie group}.
Take \ts ${\sf B}\subset {\sf G}$ and \ts ${\sf B}_{-}\subset {\sf G}$ \ts
to be the \defn{Borel subgroup}
and \defn{opposite Borel subgroup}, respectively.
The \defn{torus subgroup} \ts is defined as \ts ${\sf T}={\sf B}\cap {\sf B}_{-}\ts$.
The \defn{Weyl group} \ts is defined as
the normalizer \ts $\mathcal{W}\cong N_{\sf G}({\sf T})/{\sf T}$.
Finally, the celebrated \defng{Bruhat decomposition} \ts says that
$$
{\sf G} \ = \ \bigsqcup_{w\in \mathcal{W}} \. {\sf B}_{-} \ts \dot{w} \ts {\sf B}\ts,
$$
where \ts $\dot{w}$ \ts is the preimage of \ts $w$ \ts in the normalizer
\ts $N_{\sf G}(\sf T)$.

The \defn{generalized flag variety} \ts is defined as \ts  ${\sf G}/{\sf B}$.
Recall that ${\sf G}/{\sf B}$ \ts  has finitely many orbits under the left action of ${\sf B}_{-}\ts$.
These are called \defn{Schubert cells} \ts and denoted \ts $\Omega_w\ts$.
Schubert cells are indexed by the Weyl group elements \ts $w\in\mathcal{W}$.
See \cite{MT11} for additional background.

 The \defn{Schubert varieties} $X_w$ are the Zariski closures of Schubert cells~$\Omega_w\ts$.
 % Similarly the \emph{opposite Schubert varieties} $X^w$ are the Zariski closures of $\Omega^w$.
The \defn{Schubert classes} $\{\sigma_w\}_{w\in \mathcal{W}}$ are the
Poincar\'e duals of Schubert varieties. These form a ${\mathbb Z}$--linear basis
of the cohomology ring \ts $H^{*}({\sf G}/{\sf B})$.
%
% Define $w_0$ to be the long word in $\mathcal{W}$ and $w_{0P}$ to be the appropriate long word corresponding to the parabolic subgroup ${\sf P}$ for to $u,v,w$. Take $\widetilde{w}:=(w_0w w_0) w_{0P}$.
% Let ${\sf G}={\sf GL}_n$ be the general linear group
% over ${\mathbb C}$. The Borel subgroup ${\sf B}\subset {\sf G}$ is the space of invertible upper triangular matrices, and ${\sf B}_{-}\subset {\sf G}$ is the space of invertible lower triangular matrices.
% The \emph{complete flag variety} is defined as ${\mathcal Fl}_n:={\sf G}/{\sf B}$.
%
% The complete flag variety ${\mathcal Fl}_n$ has finitely many orbits $\Omega_w$ under the left action of ${\sf B}_{-}$ called \emph{Schubert cells}. These cells are indexed by $w\in S_n$. The orbits $\Omega^w$ of ${\mathcal Fl}_n$ under the left action of ${\sf B}$ are called \emph{opposite Schubert cells}.
%  The \emph{Schubert varieties} $X_w$ are the Zariski closures of $\Omega_w$. Similarly the \emph{opposite Schubert varieties} $X^w$ are the Zariski closures of $\Omega^w$.
% The \emph{Schubert classes} $\{\sigma_w\}_{w\in S_n}$ are the Poincar\'e duals of Schubert varieties. These form a ${\mathbb Z}$--linear basis of the cohomology ring $H^{*}({\mathcal Fl}_n)$.
%
%
%
The \defn{Schubert coefficients} $c_{u,v}^w$ are defined as structure constants:
\begin{equation}\label{eq:SchubStructureCoh}
    \sigma_u \smallsmile \sigma_v \, = \, \sum_{w\in {\mathcal{W}}} \. c_{u,v}^{w}\sigma_{w}\..
\end{equation}
The fact that in type~$A$ these constants coincide with the definition in~$\S$\ref{ss:intro-Schub-coeff},
is proved in~\cite{LS82}.  This definition easily implies the $S_3$-symmetries of Schubert coefficients:
\begin{equation}\label{eq:SchubStructure-sym}
c_{u,v}^{w_\circ w}  \,  = \, c_{v,u}^{w_\circ w} \, = \, c_{u,w}^{w_\circ v }\,.
\end{equation}

By the \defng{Kleiman transversality} \cite{Kleiman}, coefficients \ts $c_{u,v}^w$ \ts
count the number of points in the intersection of generically translated Schubert varieties:
% since
\begin{equation}\label{eq:SchubStructureInt}
    c_{u,v}^w \,  = \,\#\big\{X_u(E_{\bullet}) \cap X_v(F_{\bullet}) \cap X_{w_\circ w}(G_{\bullet})\big\},
\end{equation}
where \ts $E_{\bullet}\ts$, \. $F_{\bullet}$ \ts and \ts $G_{\bullet}$ are generic flags.
Here
% \ts $\widetilde{w}:=(w_\circ w w_\circ) w_{\circ\sf{P}}$, where
 \ts $w_\circ$ \ts is the \emph{long word} \ts in~$\ts\mathcal{W}$,
% and \ts $w_{\circ\sf{P}}$ \ts is the long word corresponding to the
% appropriate parabolic subgroup \ts ${\sf P}\subseteq {\sf B}$
% \ts for \ts $u,v,w\in \mathcal{W}$,
see \cite[Section~15.1]{AF} for details.

\medskip

\section{Lifted formulation in type $A$}\label{sec:liftA}

\subsection{Definitions} \label{ss:type-A-def}
In type $A$, we have \ts ${\sf G}:={\sf GL}_n(\cc)$. The Weyl group
\ts ${\mathcal W}=S_n$ \ts is the symmetric group on $n$ letters.
We also have \ts ${\sf G}/{\sf B}={\mathcal{F}l_n}$ \ts is
the \defn{complete flag variety}.
Here \ts ${\mathcal{F}l_n}$ \ts consists of flags \.
$$F_{\bullet}\, := \, \zero \subset F_1\subset \dots \subset F_n \. = \. \mathbb{C}^n
\quad \text{where} \quad \dim F_i=i\ts.
$$
Fix the \defn{coordinate flag} \ts $E^{\bullet}$ \ts given by \ts $E^i:=\langle e_n,\ldots,e_{i+1}\rangle$.
  We explicitly define Schubert cells as
\[
\Omega_w^{A}(E^{\bullet}) \, = \, \big\{F_{\bullet} \ : \ \dim(F_i\cap E^j)= k_w(i,j) \text { for } i,j\in[n]\big\},
\]
where
$$
k_w(i,j)\, := \,\#\big\{r\in[i] \. : \. w_r>j\big\} \quad \text{for all} \quad 1\le i,j\le n\ts.
$$
Then the Schubert variety in type $A$ is an orbit closure which can also be defined as
$$
X_w^{A}(E^{\bullet}) \. = \. \ol{\Omega_w^{A}}(E^{\bullet}).
$$

% and $E_{\bullet}'$ be the opposite standard flag, i.e. $E_i':=\langle e_n,\ldots,e_{n-i+1}\rangle$.
Take \ts $F^{\bullet}:=\pi E^{\bullet}$ \ts and \ts $G^{\bullet}:=\rho E^{\bullet}$,
where \ts $\pi=(y_{ij})$ \ts and \ts $\rho=(z_{ij})$ \ts are matrices of indeterminates.
Let \ts $\by=\{y_{11},\dots,y_{nn}\}$ \ts and \ts $\bz=\{z_{11},\dots,z_{nn}\}$ \ts be
the sets of variables in these matrices. Fix three permutations \ts $u,v,w\in S_n\ts$.

% For $\omega$ with descent set in $k_{\bullet}$, define $\widetilde{\omega}:=w_0\omega  w_{0P}^{-1} w_0$.
We use the system for solving
$$
X_{{u}}^{A}(F^{\bullet}) \. \cap \. X_{{v}}^{A}(G^{\bullet}) \. \cap \. X_{{w}}^{A}(E^{\bullet})
$$
as formulated in  \cite[Rem.~2.7]{HS17}. Since our conventions are opposite those in \cite{HS17},
we include the necessary translations here.

Using the classical determinantal formulation of the Schubert problem or an abridged version as in Billey--Vakil \cite{BV08}, one can similarly define a system of equations to solve the Schubert problem. However, these may involve exponentially many equations. See \cite{HS17} for further discussion as well as comparisons with other formulations of Schubert systems.

Let \ts $d:= \max \Des(w)$ \ts denote the maximal descent of~$w$.
The \defn{Stiefel coordinates} $\mathcal{X}_{{w}}^{A}$  for the Schubert cell $\Omega_{{w}}^{A}$ are the collection of $n\times d$ matrices $(m_{ij})$ such that
\begin{align}
m_{ij}=
    \begin{cases}
        1  & \text{ if } i={{w}}_j\\
        0  & \text{ if } i<{w}_j \text{ or } {w}^{-1}_i<j \\
        x_{ij}  & \text{ otherwise.}\\
    \end{cases}
\end{align}
For convenience, we relabel matrix entries
$\{x_{ij}\}$ as $\bx=\{x_1,\ldots, x_{s}\}$, where \ts $s= \binom{n}{2}-\inv(w)$,
reading across rows, top to bottom.
To every \ts $\omega\in \mathcal{X}_{{w}}^{A}$ with entries ${\bx}$, we associate the flag given by
the span of the column vectors $e_i(\bx)$ of $\omega$.
Take
\begin{align*}
    \bal \, &:= \, \big\{\alpha_{ij}: j\in[i], \text{ where } {u}_j<{u}_i\big\},\\
    \bbe \, &:=\, \big\{\beta_{ij}: j\in[i], \text{ where } {v}_j<{v}_i\big\}.
\end{align*}
For $i\in[d]$, define the $n$-vectors \ts $g_i(\bx,\bal)$ \ts and \ts $h_i(\bx,\bbe)$ \ts with entries in  \ts $\mathbb{Z}[\bx,\bal,\bbe]$  \ts such that
% \[g_i(x,\alpha)^{(\rho)}:=e_i(x)+\sum_{\substack{j\leq \lceil i \rceil_{k_{\bullet}} \\ \rho(j)>\rho(i)}}\alpha_{ij}^{(\rho)}e_j(x).\]
\begin{align*}
    g_i(\bx,\bal)&\, := \, e_i(\bx)\. + \. \sum_{\substack{j \in[i] \\ {u}_j<{u}_i}}\. \alpha_{ij \. }e_j(\bx)\ts,\\
    h_i(\bx,\bbe)&\, := \, e_i(\bx)\. + \. \sum_{\substack{j \in[i] \\ {v}_j<{v}_i}}\. \beta_{ij} \. e_j(\bx)\ts.
\end{align*}

\smallskip

\subsection{Characterization of Schubert varieties} \label{ss:type-A-tech}
In this section we outline a key construction of \cite{HS17}. Using the following fact, one may introduce auxiliary variables to characterize membership in the Schubert cell in terms of bilinear equations rather than determinants.

\smallskip

% %
% We need the following key property.

\smallskip

\begin{prop}[{\cite[Lemma~2.1]{HS17}}{}]\label{prop:liftInt}
    % Suppose $E^{\bullet}\in {\mathcal{F}l_n}$ such that $E^i:=\langle e_n,\ldots,e_{i+1}\rangle$.
    Let \ts $\Phi_{\bullet}\in {\mathcal{F}l_n}$ \ts be in general position with \ts $E^{\bullet}$,
    where \ts  $\Phi_i=\langle f_1,\ldots, f_i\rangle$ \ts for all \ts $i\in[d]$.
    Take \ts ${w}\in S_n$ \ts with \ts $d = \max\Des(w)$.
    Then \. $\Phi_{\bullet}\in \Omega_{w}^{A}(E^{\bullet})$ \. \underline{if and only if} \.
    for each \ts $k\in [d]$, there exist unique \ts $\{\beta_{jk}\}$ \ts such that
    \[
    g_k \, = \, f_k \. + \. \sum_{\substack{j \in[k] \\ {w}_j<{w}_k}}\. \beta_{jk} \. f_j \, \in \, E^{{w}_k-1}\. - \. E^{{w}_k}\..
    \]
\end{prop}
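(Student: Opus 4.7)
The plan is to induct on $k = 1, \ldots, d$, constructing $(g_1, \ldots, g_d)$ as a basis of $\Phi_d$ that is unitriangularly related to $(f_1, \ldots, f_d)$ and adapted to the coordinate flag $E^\bullet$. The guiding principle is that each $g_k$ should witness the unique one-dimensional jump in the filtration $(\Phi_k \cap E^j)_j$ at $j = w_k$.

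For the forward implication, assume $\Phi_\bullet \in \Omega_w^A(E^\bullet)$, so that $\dim(\Phi_i \cap E^j) = k_w(i,j)$ for all $i,j$, and suppose $g_1, \ldots, g_{k-1}$ are already constructed. Two dimension identities drive the inductive step: $k_w(k, w_k - 1) - k_w(k-1, w_k - 1) = 1$ and $k_w(k, w_k) - k_w(k-1, w_k) = 0$. The first produces a vector $h \in \Phi_k \cap E^{w_k - 1}$ whose coefficient on $f_k$ is nonzero (normalize to $1$), while the second forces $h \notin E^{w_k}$ automatically. To eliminate the $f_j$-components with $w_j > w_k$, I would subtract suitable multiples of the $g_j$ with $j<k$ and $w_j > w_k$, which by induction lie in $E^{w_j - 1} \subseteq E^{w_k - 1}$; this process terminates because each subtraction strictly decreases the largest index with $w_j > w_k$ still appearing. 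The resulting vector is the desired $g_k$.

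For uniqueness, the difference of any two candidates lies in $\Phi_{k-1} \cap E^{w_k - 1}$, which by induction together with the identity $k_w(k-1, w_k - 1) = \#\{j < k : w_j > w_k\}$ is spanned by $\{g_j : j < k,\ w_j > w_k\}$. But the difference expressed in the $f$-basis involves only $f_j$ with $w_j < w_k$, while each $g_j$ in this spanning set has leading $f_j$-term with $w_j > w_k$. Comparing leading coefficients under the unitriangular $f$-to-$g$ change of basis forces the difference to vanish. For the backward implication, given the $g_k$'s, the set $\{g_1, \ldots, g_k\}$ is a basis of $\Phi_k$ for every $k \leq d$. To compute $\dim(\Phi_i \cap E^j)$ for $i \leq d$, I would analyze the projection $\pi_j : \mathbb{C}^n \twoheadrightarrow \mathbb{C}^n / E^j$ onto the first $j$ coordinates. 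Since $g_r$ has its first nonzero coordinate in position $w_r$, we have $\pi_j(g_r) = 0$ iff $w_r > j$, and otherwise $\pi_j(g_r)$ has leading coordinate at position $w_r$; distinctness of the $w_r$'s gives linear independence of the nonzero $\pi_j(g_r)$'s. This yields $\dim(\Phi_i \cap E^j) = \#\{r \leq i : w_r > j\} = k_w(i,j)$. For $i > d$, the descent condition $d = \max\Des(w)$ forces $w_{d+1} < \cdots < w_n$, and the general position hypothesis ensures that the remaining intersection dimensions match $k_w(i,j)$ automatically.

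The main obstacle I anticipate is the uniqueness portion of the forward direction, which hinges on the leading-coefficient comparison between the $f$- and $g$-bases; the bookkeeping of how the indices $\{j : w_j < w_k\}$ and $\{j : w_j > w_k\}$ interleave under the unitriangular change of basis is delicate. A secondary subtlety is that the backward direction for $i > d$ uses general position in an essential way, since the Schubert cell conditions at the essential positions $i \leq d$ force the remaining conditions only under such a genericity hypothesis.
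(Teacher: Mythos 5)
Your argument is correct and follows essentially the same route as the paper's proof: induction on $k$, using the one-dimensional jump of $\Phi_k\cap E^{w_k-1}$ over $\Phi_{k-1}\cap E^{w_k-1}$ to produce $g_k$, eliminating the $f_j$-components with $w_j>w_k$ via the previously constructed $g_j\in E^{w_j-1}\subseteq E^{w_k-1}$, and deducing uniqueness from the triangularity of the $g_j$'s with respect to $E^\bullet$. You in fact go slightly further than the printed proof, which only treats the forward implication; your converse direction (counting $\dim(\Phi_i\cap E^j)$ via leading positions of the $g_r$, with the $i>d$ cases handled by the general-position hypothesis) is a worthwhile addition.
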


Note that the statement of the proposition is translated into our conventions
to make it amenable to generalization for other types.  We include their proof
for completeness.

\begin{proof}
    Suppose $\Phi_{\bullet}\in \Omega_w^{A}(E^{\bullet})$. We proceed by induction on $k$. For the base case, this translates to $g_1=f_1\in E^{w_1-1}-E^{w_1}$, which follows since $\Phi_{\bullet}\in \Omega_w^{A}(E^{\bullet})$. Suppose the result holds for all \ts $k'<k$.
Note that \. $\Phi_{k-1}+(E^{w_k-1}\cap \Phi_k)$ \. must be $0$ or $1$-dimensional modulo \ts $\Phi_{k-1}$.
    Thus there must exist some $f\in \Phi_{k-1}$ such that $f_k+f\in E^{w_k-1}$. Since $\Phi_{\bullet}\in \Omega_w^{A}(E^{\bullet})$, we further have $f_k+f\in E^{w_k-1}-E^{w_k}$.

    Since \ts $f\in \Phi_{k-1}$ \ts and \ts $f=\sum_{j<k}\alpha_j f_j\ts$,
    by the inductive assumption we have \ts $f=\sum_{j<k}\alpha_j' g_j\ts$. Note that if \ts $w_j>w_k$ \ts and \ts $E^{w_j-1}\subset E^{w_k-1}$,
    we may assume that
    $$f \, = \, \sum_{\substack{j<k \\ w_j<w_k}}\. \alpha_j' \. g_j\ts.
    $$
    Since \ts $g_j\in E^{w_j-1}-E^{w_j}$\ts, we obtain uniqueness of \ts $\alpha_j'\ts$.
    Using the inductive assumption, we can now rewrite each such $g_j$ in terms of $f_i$'s, giving
    \[g_k \, := \, f_k \ts + \ts f \, = \, f_k\. + \. \sum_{\substack{j \in[k] \\ w_j<w_k}}\. \beta_{jk}\. f_j \, \in \, E^{w_k-1}\ts - \ts E^{w_k}\..
    \]
    This completes the proof.
\end{proof}
% We relabel $\alpha$ as $\{x_1,\dots,x_{\#\alpha}\}$ and $\beta$ as $\{x_{\#\alpha+1},\dots,x_{\#\alpha+\#\beta}\}$ in lexicographic order.
% For each $i\in[k_\ell]$, we require $g_i(y,x)^{(u)}\in E_{u(i)}$ and $g_i(y,x)^{(v)}\in E_{v(i)}'$.

\smallskip

\subsection{The construction} \label{ss:type-A-construction}
Let $d$ denote the maximal descent among ${u},{v},w:$
$$
d \. := \. \max  \big( \Des(u) \cup \Des(v) \cup \Des(w)\ts\big)\ts.
$$
We consider \ts $\omega\in \mathcal{X}_{{w}}^{A}$ with  column vectors $e_j(\bx)$. By construction, the corresponding flag $\omega^{\bullet}$ lies in $\Omega_w^{A}(E^{\bullet})$. Then construct $g_i(\bx,\bal)$ and $h_i(\bx,\bbe)$ in terms of $e_j(\bx)$.

By Proposition~\ref{prop:liftInt}, $\omega^{\bullet}\in \Omega_u^{A}(F^{\bullet})$ if and only if for every \ts $i\in[d]$, $g_i(\bx,\bal)\in F^{{u}_i-1}-F^{{u}_i}$ for $\bal$ unique.
% For every \ts $i\in[d]$, we have \ts $g_i(\bx,\bal)\in F^{{u}_i-1}-F^{{u}_i}$.  Thus,
To impose this condition,
we require
\begin{align}\label{eq:mainEq1A}
    (y_{j1} \hspace*{0.2cm} y_{j2}\hspace*{0.2cm}\cdots\hspace*{0.2cm} y_{jn}) \. \cdot \. g_i(\bx,\bal) \, = \, 0\ts, \text{ for each \ts $j<{u}_i$\ts.}
\end{align}
%\[(z_{kj})_{j=1}^n\cdot g_i(y,x)^{(u)}=0.\]
% Similarly, for every \ts $i\in[d]$, we have \ts  $h_i(\bx,\bbe)\in G^{{v}_i-1}-G^{{v}_i}$. Thus, for each \ts $j<{v}_i$\ts, we require

Similarly, $\omega^{\bullet}\in \Omega_v^{A}(G^{\bullet})$ if and only if for every \ts $i\in[d]$, $h_i(\bx,\bbe)\in G^{{v}_i-1}-G^{{v}_i}$ for $\bbe$ unique.
For this last condition, we need
\begin{align}\label{eq:mainEq2A}
  (z_{j1} \hspace*{0.2cm} z_{j2}\hspace*{0.2cm}\cdots\hspace*{0.2cm} z_{jn})\. \cdot \.  h_i(\bx,\bbe) \, = \, 0 \ts \text{, for each \ts $j<{v}_i$\ts.}
\end{align}
%\[(w_{kj})_{j=1}^n\cdot g_i(y,x)^{(v)}=0.\]

Let \. $\mathcal{S}^{A}(u,v,w)$ \. denote the system given by Equations~\eqref{eq:mainEq1A} and~\eqref{eq:mainEq2A}.
Note that the system \. $\mathcal{S}^{A}(u,v,w)$ \. consists of at most \ts $3\binom{n}{2}=O(n^2)$ \ts
bilinear equations and variables with coefficients in \ts $\mathbb{Z}[\by,\bz]$.
Moreover, all these coefficients have monomials in \ts $\{0,1\}$.
% \Colleen{get better/more explicit bound on these?}
% Note  $m\leq 3\binom{n}{2}$.
% We will write these equations as $\{f_i\}_{i\in[m]}$, where $f_i\in K[x,\alpha, \beta]$ and $K=\mathbb{Z}(y,z)$.

% Take \ts $k_{\bullet}=\{k_1<\dots<k_\ell\}$ \ts to be the coarsest refinement of the union of descent sets of $u,v,w$.
Define $w_\circ:=(n,n-1,\ldots,1)\in S_n$ to be the long word.
% and take $w_{\circ\sf{P}}$ to be the long word associated to the parabolic subgroup determined by \ts $k_{\bullet}$ \ts In particular, we have:
% $$
%  w_{\circ\sf{P}}\. = \. (n-k_1+1,\ldots,n, \. n-k_2+1,\ldots,n-k_1, \. \ldots \. , n-k_{s-1},1,\ldots,  n-k_{s}).
% $$
% Let \. $\widetilde{w}:=(w_\circ w w_\circ) w_{\circ\sf{P}}$, as before.
Let \ts $c_{u,v}^w(A)$ \ts
denote the Schubert coefficients in type~$A$ defined as in~$\S$\ref{ss:intro-Schub-coeff} or by
\eqref{eq:SchubStructureCoh}.

\smallskip

\begin{prop}\label{prop:schubHNredA}
  The system \. $\mathcal{S}^{A}(u,v,w_\circ w)$ \. has \. $c_{u,v}^w(A)$ \. solutions over \ts $\overline{\mathbb{C}(\by,\bz)}$.
\end{prop}
\begin{proof}
We first note
\begin{align*}
    c_{u,v}^w(A) \, &= \, \#\big\{X_{{u}}^{A}(F^{\bullet}) \cap X_{{v}}^{A}(G^{\bullet}) \cap X_{w_\circ w}^{A}(E^{\bullet})\big\}.\\
    &= \#\big\{\Omega_u(F^{\bullet}) \cap \Omega_v(G^{\bullet}) \cap \Omega_{w_\circ w}(E^{\bullet})\big\}.
\end{align*}
The first equality follows by definition. The next follows since by Kleiman transversality \cite{Kleiman}, all points in the first intersection of Schubert varieties must lie in the interior.

By Proposition~\ref{prop:liftInt},
for generic choices of $\by,\bz$ solutions to $\mathcal{S}^{A}(u,v,w_\circ w)$ are biject with flags $\omega^{\bullet}\in \Omega_u(F^{\bullet}) \cap \Omega_v(G^{\bullet}) \cap \Omega_{w}(E^{\bullet})$. Thus for generic evaluations \ts $\ov y, \ov z$ \ts of $\by, \bz$, respectively, the system  \. $\mathcal{S}^{A}(u,v,w_\circ w)$ \. has \ts $c_{u,v}^w(A)$ \ts solutions over \ts $\mathbb{C}$.

    Take \.
    $I\subseteq \mathbb{Z}(\ov y,\ov z)[\bx,\bal, \bbe]$ \. to be the ideal generated by the vanishing expressions in the system \ts $\mathcal{S}^{A}(u,v,w_\circ w)$.\footnote{Here \ts $I\subseteq \kk[\bx,\bal, \bbe]$ \ts where $\kk=\zz[\ov y,\ov z]$.
    We write \ts $\mathbb{Z}(\ov y,\ov z)[\bx,\bal, \bbe]$ \ts to make presentation uniform in all types and clarify the
    connection to $\HNP$ as presented in \cite{A+24}.
    }
A generic choice of evaluation has the ideal~$I$ be zero--dimensional and of degree \ts $c_{u,v}^w(A)$.
Since variables $\by,\bz$ are independent, their evaluations $\ov y$ and $\ov z$ are algebraically
independent, and the result follows.
\end{proof}

\smallskip

\subsection{An example}\label{app:Schub}
For the purposes of illustration, we formulate the system of equations
for the vanishing problem in the case $n=4$.

\smallskip

Let $E^{\bullet}$ be the standard flag.
Take $F^{\bullet}:=\pi E^{\bullet}$ and $G^{\bullet}:=\rho E^{\bullet}$ for $\pi =(y_{ij})$ and $\rho=(z_{ij})$ matrices of indeterminates.
Take $u=2143$, $v=3124$, and $w=4132$ in $S_4$. Then $w_\circ w=1423$. Note $d=3$ in this case.

Then the Stiefel coordinates are given by
\begin{equation*}
\mathcal{X}_{w_\circ w}^A \ = \ \left\{
{\small    \begin{pmatrix}
1 & 0 & 0 \\
x_1 & 0 & 1 \\
x_2 & 0 & x_4 \\
x_3 & 1 & 0
\end{pmatrix}}
: x_1,x_2,x_3,x_4\in \mathbb{C}
\right\}.
\end{equation*}
Let $e_i(\bx)$ denote the $i$-th column of \ts $\mathcal{X}_{w_\circ w}^A$ \ts for \ts $i\in \{1,2,3\}$.
To restrict solutions to the intersection \ts $X_{w_\circ w}^A(E^{\bullet})\cap X_{u}^A(F^{\bullet})$, we require
\begin{align*}
    g_1(\bx,\bal) \, &= \, c_1(\bx)\in F^{1}-F^{2},\\
    g_2(\bx,\bal)\, &= \, c_2(\bx)\in F^{0}-F^{1}, \text{ and}\\
    g_3(\bx,\bal)\, &= \, \al_{3,1}c_1(\bx)+\al_{3,2}c_2(\bx)+c_3(\bx)\in F^{3}-F^{4}.
\end{align*}
To restrict solutions to the intersection \ts $X_{w_\circ w}^A(E^{\bullet})\cap X_{{u}}^A(F^{\bullet})\cap X_{{v}}^A(G^{\bullet})$, we require
\begin{align*}
    h_1(\bx,\bbe)  \, &= \, c_1(\bx)\in G^{2}-G^{3},\\
    h_2(\bx,\bbe) \, &= \, c_2(\bx)\in G^{0}-G^{1}, \text{ and}\\
    h_3(\bx,\bbe) \, &= \, \be_{3,2}c_2(\bx)+c_3(\bx)\in G^{1}-G^{2}.
\end{align*}
These inclusions determine the following system \. $\mathcal{S}^{A}(u,v,w_\circ w):$
$$
\left\{
\aligned
% \ & z_{1k}(\be_{41}+\be_{42}x_1+\be_{43}x_2+x_4)+z_{2k}(\be_{42}+\be_{43}x_3+x_5)+z_{3k}+z_{4k}(\be_{43}) \ = 0 \ \text{ for } \ k\in\{1,2,3\},\\
% \ & y_{1k}(\al_{31}+\al_{32}x_1+x_2)+y_{2k}(\al_{32}+x_3)+y_{4k} \ = \ 0 \ \text{ for } \ k\in \{1,2,3\},\\
% \ & y_{1k}(\al_{41}+\al_{42}x_1+x_4)+y_{2k}(\al_{42}+x_5)+y_{3k}\ = \ 0 \ \text{ for } \ k\in \{1,2\},\\
% \ &  z_{11}(\be_{32}x_1+x_2)+z_{21}(\be_{32}+x_3)+z_{41} \ = \ 0\ts, \\
% \ &  z_{11} \, = \, z_{21} \, = \,  y_{11} \ = \ 0\ts.
\ & y_{k1}(\al_{3,1})+y_{k2}(\al_{3,1}x_1+1)+y_{k3}(\al_{3,1}x_2+x_4)+y_{k4}(\al_{3,1}x_3+\al_{3,2}) \ = \ 0 \ \text{ for } \ k\in \{1,2,3\},\\
\ & y_{11}+y_{12}(x_1)+y_{13}(x_2)+y_{14}(x_3) \ = \ 0 ,\\
\ & z_{k1}+z_{k2}(x_1)+z_{k3}(x_2)+z_{k4}(x_3) \ = \ 0 \ \text{ for } \ k\in \{1,2\},\\
\ & z_{12}+z_{13}(x_4)+z_{14}(\be_{3,2}) \ = \ 0.
\\
\endaligned \right.
$$
Note that this is a square system in \ts $\kk[x_1,x_2,x_3,x_4,\al_{31},\al_{32},\be_{32}]$, with
$7$ equations in~$7$ variables, and coefficients in \ts $\kk = \zz[y_{11},y_{12},\ldots,y_{34},z_{11},z_{12},\ldots,z_{24}]$.

\medskip

\section{Lifted formulations in types $B$ and $C$}\label{sec:liftBC}

\subsection{Definitions} \label{ss:type-B-def}
Let \. $\{e_{\ol{n}}, \ts \ldots \ts , \ts e_{\ol{1}}, \ts e_1, \ts \ldots \ts , e_n\}$ \. denote a basis in \ts $\mathbb{C}^{2n}$.
Define the \defn{skew symmetric} bilinear form on \ts $\mathbb{C}^{2n}$, such that \.
$\langle e_i, e_j\rangle=\langle e_{\ol{i}}, e_{\ol{j}}\rangle=0$ \. and \. $-\langle e_i, e_{\ol{j}}\rangle=\langle e_{\ol{j}}\ts, \. e_{i}\rangle=\delta_{ij}$ \ts for all \ts $i,j\in[n]$. Here and throughout this section we use the notation \ts $\ol{k}:=-k$ \ts
for all \ts $k\in[n]$.

In type~$C$, we have \ts ${\sf G}={\sf Sp}_{2n}(\cc)$ \ts the symplectic group,
defined with respect to the bilinear form \ts $\langle \cdot,\cdot\rangle$.
The corresponding Weyl group is the \defn{hyperoctahedral group} \.
$\mathcal{W}_n=S_n\ltimes \{\pm 1\}^n$, which can be viewed as
the set of \defn{signed permutations} of $n$ letters.
It will be convenient to realize $\mathcal{W}_n$ as a subgroup of
$S_{2n}$ where we impose \ts $w_{\ol{i}}=\ol{w_i}$ \ts for each \ts $i\in[n]$.

An \defn{isotropic flag} \ts $F_\bullet$ \ts is a flag of spaces
\[
\zero \subset F_n\subset F_{n-1}\subset \dots\subset F_{1}\subset \mathbb{C}^{2n},
\]
where \ts $\dim F_i=n+1-i$ for each $i\in[n]$ and each $F_i$ is \defn{isotropic},
i.e.\ satisfies \ts $F_i\subseteq F_i^{\perp}$.
We identify the flag variety \ts ${\sf G}/{\sf B}$ \ts
with the set of such isotropic flags.

Recall that \emph{torus subgroup} \ts ${\sf T}$ \ts in this case consists
of diagonal matrices $T$ given by
\ts ${\sf diag}(T)=(a_1,\ldots,a_{2n})$ \. with \. $a_{n+i}=a_i^{-1}$ \ts
for all \ts $i\in [n]$.
% Borel subgroup \ts ${\sf B}$ \ts consists of the upper triangular matrices in \ts ${\sf G}$ \ts.
% in this case can be realized as the product
% of \ts ${\sf T}$ \ts and
% \[
% \bigg\{\begin{pmatrix}
% {\rm I}_n & {\rm D}_nS\\
% 0 & {\rm I}_n
% \end{pmatrix} \. : \. S \ \text{is symmetric}\bigg\}\ts,
% \]
Define the \defn{antidiagonal matrix} \ts ${\rm D}_n=(d_{ij})_{i,j\in[n]}$ \ts is defined by
\ts $d_{ij}=1$ \ts if $i+j=n+1$, and \ts $d_{ij}= 0$ \ts otherwise.
Finally, denote
\begin{equation}\label{eq:J-mat}
{\rm J}\, := \, {\small \begin{pmatrix}
0 & {\rm D}_n\\
-{\rm D}_n & 0
\end{pmatrix}}.
\end{equation}

 Representing isotropic flags $F_{\bullet}$ with matrices $\omega$, the isotropic condition
 translates to
\begin{equation}\label{eq:spForm}
\left\{
\omega\cdot {\rm J}\cdot \omega^T \. = \. {\rm J}
\right\}.
\end{equation}
% Representing these flags with matrices \. $\omega= {\footnotesize \begin{pmatrix}
% R & S\\
% T & U
% \end{pmatrix}}$, the isotropic condition translates to
% \begin{equation}\label{eq:spForm}
% \left\{\aligned
%  RDS^{\dagger} \. &= \. (RDS^{\dagger})^{\dagger}\\
% TDU^{\dagger} \. &= \. (TDU^{\dagger})^{\dagger} \\
%   (RDU^{\dagger})^{\dagger} \. &=  \. D \. + \. SDT^{\dagger},
% \endaligned \right.
% \end{equation}
% where \ts $M^\dagger$ \ts denotes the conjugate transpose of~$M$.

\smallskip
Consider spaces $E_i:=\langle e_n,\ldots,e_{i}\rangle$ for $i\in[n]$. We extend this chain to a complete flag $E_{\bullet}$ by setting
$$
E_{\ol{i}} \. := \. E_{i+1}^{\perp} \. = \. \langle e_n,\ldots,e_1,e_{\ol{1}},\ldots,e_{\ol{i}}\rangle$$
for each $i\in[n]$.
For all \ts $w\in{\mathcal W}_n$, we can explicitly define Schubert cells in type~$C$ as follows:
\[
\Omega_w^{C}(E_{\bullet}) \, = \, \big\{\ts F_{\bullet} \ : \ \dim(F_i\cap E_j)= k_w(\ol{i},j) \
\text { for } \ i\in[n], \, j\in[\ol{n},n], \. j \ne 0\ts\big\},
\]
where $k_w(\ol{i},j):=\#\{r\leq \ol{i} \. : \. w_r\geq j\}$.
Then the Schubert variety in type~$C$ is an orbit closure which can be defined as the follows:
$$
X_w^{C}(E_{\bullet}) \, = \, \ol{\Omega_w^{C}}(E_{\bullet})\ts.
$$

To construct a generic element $g\in{\sf G}$, we use a generalization of the \emph{Cayley transform},
see \cite[$\S$10]{We66}.\footnote{This construction was suggested to us by David Speyer (personal communication).}
Let $M=(m_{ij})$ be a $2n\times 2n$ matrix such that $M{\rm J}=-{\rm J}M^T$, i.e., $M\in{\fg}$. These are precisely block matrices
  \begin{align}\label{eq:blockskew}
  M=
      \left(\begin{array}{ c  c }
    A & B \\
    C & -A^T
  \end{array}\right), \text{
  where $B=B^T$ and $C=C^T$. }
  \end{align}
  Equivalently, $m_{ij} = m_{(2n+1-j)(2n+1-i)}$ if $i \leq n < j$ or $j \leq n < i$ and $m_{ij} = -m_{(2n+1-j)(2n+1-i)}$ otherwise.
  Thus $M$ can be expressed in terms of those entries weakly above the antidiagonal.
  % Thus a generic element $Y$ can be represented in indeterminates $\by=\{y_{1},\dots,y_{t}\}$, where $t=2n^2+n$ and $y_{ij}$ are signed elements of $\by$.

Consider $g=({\rm I}_{2n}+M)^{-1}({\rm I}_{2n}-M)$, where $g=(g_{ij})$. It is straightforward to check $g$ satisfies Equation~\eqref{eq:spForm} since $M{\rm J}=-{\rm J}M^T$ and that such elements $g$ are dense in ${\sf G}$.
 Thus we may represent a generic element of ${\sf G}$ as a $2n\times 2n$ matrix $g=(g_{ij})$, where $g=({\rm I}_{2n}+M)^{-1}({\rm I}_{2n}-M)$ constructed as above. Here we treat the entries $m_{ij}$ for $i+j\leq 2n+1$ as parameters. We treat $g_{ij}$ for $i,j\in[2n]$ as variables constrained by $g\cdot({\rm I}_{2n}+M)=({\rm I}_{2n}-M)$.
 % These constraints translate to $4n^2$ linear equations in $\{\pi_{ij}\}$ with coefficients as linear terms in $\mathbb{Z}[\by]$. Relabel $\{\pi_{ij}\}$ as $\bpi=\{\pi_1,\ldots, \pi_{r}\}$ and $\{\rho_{ij}\}$ as $\brho=\{\rho_1,\ldots, \rho_{r}\}$, where $r=4n^2$.

 Using this construction, we obtain generic elements $\pi=(\pi_{ij})$ and $\rho=(\rho_{ij})$ of ${\sf G}$, where $\pi\cdot ({\rm I}_{2n}+Y)=({\rm I}_{2n}-Y)$ and $\rho\cdot ({\rm I}_{2n}+Z)=({\rm I}_{2n}-Z)$
for $Y,Z$ matrices of the form of Equation~\eqref{eq:blockskew} with entries in parameters $\by=\{y_{1},\dots,y_{t}\}$ and $\bz=\{z_{1},\dots,z_{t}\}$, respectively. Here $t=2n^2+n$.  Let $\bpi=\{\pi_{ij}\}_{i,j\in[2n]}$ $ \brho=\{\rho_{ij}\}_{i,j\in[2n]}$ be sets of variables.
Take \ts $F_{\bullet}:=\pi E_{\bullet}$ \ts and \ts $G_{\bullet}:=\rho E_{\bullet}$.
% \ts
% for \ts $\pi=(y_{ij})$ \ts and \ts $\rho=(z_{ij})$ \ts matrices of indeterminates satisfying
% Equation~\eqref{eq:spForm}. Let \ts $\by=\{y_{1},\dots,y_{t}\}$ \ts and \ts
% $\bz=\{z_{1},\dots,z_{t}\}$, where \ts $t\le \binom{2n}{2}$ \.
% denote the variables appearing therein.
% Note that any $y_{ij}$ will be expressions in $\mathbb{Z}(y)$, where all coefficients are $\{0,\pm1\}$, and similarly for $z_{ij}$.
% Note that the expressions $y_{ij}$ are in $\mathbb{Q}(\by)$, where coefficients that appear have polynomial size, and analogously for $z_{ij}$\ts.

Fix three elements \. $u,v,w\in {\mathcal{W}}_n\ts$.
We consider the system for solving
$$X_{{u}}^{C}(F_{\bullet}) \. \cap \. X_{{v}}^{C}(G_{\bullet}) \. \cap \. X_{{w}}^{C}(E_{\bullet}).
$$
% Let $d$ denote the last descent of $w$.
The \defn{Stiefel coordinates} \ts $\mathcal{X}_{{w}}^{C}$ \ts for the
Schubert cell \ts $\Omega_{{w}}^{C}$,
are the collection of $2n\times 2n$ matrices $(m_{ij})$ such that
\begin{align}
m_{ij}=
    \begin{cases}
        1  & \text{ if } i={{w}}_j\\
        0  & \text{ if } i<w_j \text{ or } w^{-1}_i<j \\
        x_{ij}  & \text{ otherwise,}\\
    \end{cases}
\end{align}
and such that Equation~\eqref{eq:spForm} is satisfied.
% $x_{ij}$ are such that the isotropic condition is satisfied. In particular, for $\rho\in \mathcal{X}_{{w}}^{C}$ such that
% $\rho=\begin{pmatrix}
% R & S\\
% T & U
% \end{pmatrix}$, we have
% \begin{align}\label{eq:spForm}
% \begin{split}
%  RDS^{\dagger} &= (RDS^{\dagger})^{\dagger}\\
% TDU^{\dagger}  &= (TDU^{\dagger})^{\dagger} \\
%   D &=(RDU^{\dagger})^{\dagger}- SDT^{\dagger}.i
% \end{split}
% \end{align}
% Here $D=(d_{ij})_{i,j\in[2n]}$, where $d_{ij}=1$ if $i+j=2n$ and $0$ otherwise.

 For convenience,
we relabel matrix entries
$\{x_{ij}\}$ as $\bx=\{x_1,\ldots, x_{s}\}$, reading across rows, top to bottom. To every \ts $\omega\in \mathcal{X}_{{w}}^{C}$ with entries ${\bx}$, we associate the flag given by
the span of the column vectors $e_i(\bx)$ of $\omega$.
%
% \begin{prop}
%     Suppose $E_{\bullet}\in {\sf Sp}_{2n}/{\sf B}$ such that $E_i:=\langle e_n,\ldots,e_{i+1}\rangle$. Let $F_{\bullet}\in {\sf Sp}_{2n}/{\sf B}$ be in general position with $E_{\bullet}$. Take $w\in S_n$ with last descent in position $d$. Then $E_{\bullet}\in \Omega_w(F_{\bullet})$ if and only if for each $k\in [d]$, there existi unique $\beta_{ki}$ such that
%     \[g_k=e_k+\sum_{\substack{j \in[k] \\ \widetilde{u}_j<\widetilde{u}_k}}\beta_{ki}e_k\in F_{w_k}-F_{w_k+1}.\]
% \end{prop}
% \Colleen{add proof}
%
%
Take
\begin{align*}
    \bal \. &:= \. \{\alpha_{ij}: j <i, \ \text{ where } \ {u}_j<{u}_i\},\\
    \bbe \. &:= \. \{\beta_{ij}: j <i,  \text{ where } \ {v}_j<{v}_i\}.
\end{align*}
For $i\in[\ol{n},n]$, \. $i\ne 0$, define the $2n$-vectors \ts $g_i(\bx,\bal)$ \ts and \ts $h_i(\bx,\bbe)$ \ts with entries in  \ts $\mathbb{Z}[\bx,\bal,\bbe]$  \ts such that
\begin{align*}
    g_i(\bx,\bal) \, &  := \, e_i(\bx) \. + \. \sum_{\substack{j <i\\ {u}_j<{u}_i}}\. \alpha_{ij} \. e_j(\bx), \\
    h_i(\bx,\bbe) \, & := \, e_i(\bx) \. + \. \sum_{\substack{j <i \\ {v}_j<{v}_i}} \. \beta_{ij} \. e_j(\bx).
\end{align*}

\smallskip

\begin{prop}\label{prop:liftIntBC}
    % Suppose $E^{\bullet}\in {\mathcal{F}l_n}$ such that $E^i:=\langle e_n,\ldots,e_{i+1}\rangle$.
    Let \ts $\Phi_{\bullet}\in {\sf G}/{\sf B}$ \ts be in general position with \ts $E_{\bullet}$,
    where \ts  $\Phi_i=\langle f_1,\ldots, f_i\rangle$ \ts for $i\in[\ol{n},n]$, \. $i\ne 0$.
    Take \ts ${w}\in {\mathcal{W}}$.
    Then \. $\Phi_{\bullet}\in \Omega_{w}^{C}(E_{\bullet})$ \. \underline{if and only if} \.
    for each \ts for $k\in[\ol{n},n]$, \. $k\ne 0$, there exist unique \ts $\{\beta_{jk}\}$ \ts such that
    \[
    g_k \, = \, f_k \. + \. \sum_{\substack{j <i \\ {w}_j<{w}_i}}\. \beta_{jk} \. f_j \, \in \, E_{{w}_k}\. - \. E_{{w}_k+1}\..
    \]
\end{prop}
\begin{proof}
    This follows from precisely the same argument as in Proposition~\ref{prop:liftInt}.
\end{proof}
\smallskip

\subsection{The construction in type~$C$} \label{ss:type-C-construction}
We consider \ts $\omega\in \mathcal{X}_{{w}}^{C}$ with  column vectors $e_j(\bx)$.  By construction, the corresponding flag $\omega_{\bullet}$ lies in $\Omega_w^{C}(E_{\bullet})$ after imposing Equation~\eqref{eq:spForm}. Then construct $g_i(\bx,\bal)$ and $h_i(\bx,\bbe)$ in terms of $e_j(\bx)$.

By Proposition~\ref{prop:liftIntBC}, $\omega_{\bullet}\in \Omega_u^{C}(F_{\bullet})$ if and only if $i\in[\ol{n},n]$, \. $i \ne 0$, we have \. $g_i(\bx,\bal)\in F_{{u}_i}-F_{{u}_i+1}$ for $\bal$ unique.
To impose this condition,
we require
\begin{align}\label{eq:mainEq1C}
    (\pi_{j1} \hspace*{0.2cm} \pi_{j2}\hspace*{0.2cm}\cdots\hspace*{0.2cm} \pi_{j,2n})\cdot g_i(\bx,\bal) \. = \.0\ts, \text{ for each \ts $j<{u}_i$\ts.}
\end{align}

Similarly, $\omega_{\bullet}\in \Omega_v^{C}(G_{\bullet})$ if and only if for every \. $i\in[\ol{n},n]$, \. $i \ne 0$, we have \.
$h_i(\bx,\bbe)\in G_{{v}_i}-G_{{v}_i+1}$ for $\bbe$ unique.
For this last condition, we need
\begin{align}\label{eq:mainEq2C}
  (\rho_{j1} \hspace*{0.2cm} \rho_{j2}\hspace*{0.2cm}\cdots\hspace*{0.2cm} \rho_{j,2n})\cdot h_i(\bx,\bbe)\. = \.0\ts \text{, for each \ts $j<{v}_i$\ts.}
\end{align}

% Similarly for every \. $i\in[\ol{n},n]$, \. $i \ne 0$, we have \.
% $h_i(\bx,\bbe)\in G_{{v}_i}-G_{{v}_i+1}$. Thus, for each \. $j<{v}_i$, we require
% \begin{align}\label{eq:mainEq2C}
%   (\rho_{j1} \hspace*{0.2cm} \rho_{j2}\hspace*{0.2cm}\cdots\hspace*{0.2cm} \rho_{j,2n})\cdot h_i(\bx,\bbe)\. = \.0\ts.
% \end{align}

Let \. $\mathcal{S}^{C}(u,v,w)$ \. denote the system given by Equations~\eqref{eq:spForm},
\eqref{eq:mainEq1C}, and~\eqref{eq:mainEq2C} along with the constraints $\pi=(1+Y)^{-1}(1-Y)$ and $\rho=(1+Z)^{-1}(1-Z)$.
Equation~\eqref{eq:spForm} introduces $12n^2$ quadratic equations in fewer than $4n^2$ variables.
Equations~\eqref{eq:mainEq1C} and~\eqref{eq:mainEq2C} add at most $2\binom{2n}{2}$ trilinear equations and $2\binom{2n}{2}+8 n^2$ variables with $0, 1$ coefficients. The remaining constraints on $\pi$ and $\rho$ each introduce $4n^2$ linear equations with coefficients in $\mathbb{Z}[\by,\bz]$.
% Even more, the integer coefficients of the monomials in $\mathbb{Z}(y,z)$ are all $\{0,\pm 1\}$.
% \Colleen{is the last sentence still true?}
% Note  $m\leq 3\binom{n}{2}$.
% We will write these equations as $\{f_i\}_{i\in[m]}$, where $f_i\in K[x,\alpha, \beta]$ and $K=\mathbb{Z}(y,z)$.

% Define $w_0$ to be the long word in $\mathcal{W}_n$ and $w_{0\sf{P}}$ to be the long word corresponding to the appropriate parabolic subgroup ${\sf P}$ for $u,v,w$. Take $\widetilde{w}:=(w_0w w_0) w_{0\sf{P}}$.

\smallskip

Let \ts $c_{u,v}^w(C)$ \ts denote the Schubert coefficients in type~$C$,
defined as structure constants of Schubert polynomials given by the Algebraic
Definition in~$\S$\ref{ss:intro-Schub} or a combinatorial construction in~\cite{ST23}.
Alternatively, Schubert coefficients in type~$C$ can be
defined by Equation~\eqref{eq:SchubStructureCoh}.
%in this case where ${\sf G}={\sf Sp}_{2n}$.

\smallskip

\begin{prop}\label{prop:schubHNredC}
  The system \. $\mathcal{S}^{C}(u,v,w_\circ w)$ \. has \. $c_{u,v}^w(C)$ \.
  solutions over \. $\overline{\mathbb{C}(\by,\bz)}$.
\end{prop}

\smallskip

The proof follows verbatim the proof of Proposition~\ref{prop:schubHNredA}.

% \begin{proof}
% By definition
% \[c_{u,v}^w(A)=\#(X_{\widetilde{u}}^{A}(E^{\bullet}) \cap X_{\widetilde{v}}^{A}(F^{\bullet}) \cap X_{w}^{A}(G^{\bullet})).\]
%     By Proposition~\ref{prop:liftInt}, for generic choices of $y,z$ solutions to this system are precisely the solutions to the corresponding Schubert problem. Thus for generic evaluations of $y$ and $z$, the system  $\mathcal{S}^{A}(u,v,w)$ has $c_{u,v}^w(A)$ solutions over $\mathbb{C}$. Take $I\subseteq \mathbb{Z}(y,z)[x,\alpha, \beta]$ to be the ideal generated by the vanishing expressions in $\mathcal{S}^{A}(u,v,w)$.
% A generic choice of evaluation has $I$ zero--dimensional and of degree $c_{u,v}^w(A)$. Since the indeterminates in $y$ and $z$ are algebraically independent, the result follows.
% \end{proof}
% \Colleen{determine if transverseness will be an issue here}

\medskip

\subsection{The construction in type~$B$} \label{ss:type-B-construction}
In type~$B$, we have \ts ${\sf G}={\sf SO}_{2n+1}(\cc)$ \ts defined analogously.
Let \. $c_{u,v}^w(B)$ \. denote the Schubert coefficients in type~$B$,
defined as above.  For \. $w\in \mathcal{W}_n$ \.
let \.
$$
s(w)\, := \, \big\{i\in[n] \, : \, w_i<0 \big\}.$$
Finally, define
$$
\mathcal{S}^{B}(u,v,w)\. := \. \mathcal{S}^{C}(u,v,w).
$$

\smallskip

\begin{prop}\label{prop:schubHNredB}
  The system \. $\mathcal{S}^{B}(u,v,w_\circ w)$ \. has \.
  $2^{s(u)+s(v)-s(w)}c_{u,v}^w(B)$ \. solutions over \. $\overline{\mathbb{C}(\by,\bz)}$.
  % Let $u,v,w\in \mathcal{W}_n$. Then $c_{u,v}^w(B)=2^{s(w)-s(u)-s(v)}c_{u,v}^w(C)$.
\end{prop}\begin{proof}
    % \Colleen{confirm correct citation}
    By \cite{BH95}, we have
    \begin{equation}\label{eq:Schub-BC}
    c_{u,v}^w(B) \, = \, 2^{s(w)-s(u)-s(v)} \. c_{u,v}^w(C)\ts.
    \end{equation}
    Thus the result follows by Proposition~\ref{prop:schubHNredC}.
\end{proof}

% \Colleen{determine if transverseness will be an issue here}
% \Colleen{I think the determinantal condition keeps us from getting the AM result, but we should still have the PC and GCC}

\medskip

\section{Proof of Main Lemma~\ref{l:SV-HNP}}
%
%\subsection{Proof of Theorem~\ref{t:main-AM}}
%
Fix type \ts $Y\in\{A,B,C\}$.  Let \ts $u,v,w\in \mathcal W$ \ts be elements in
the corresponding Weyl group.
%
% \begin{cor}\label{cor:system}
% A solution to the system $\mathcal{S}^{X}(u,v,w)$ exists if and only if $c_{u,v}^{w}(X)> 0$.
% % \end{cor}
% \begin{proof}
%     This follows directly from Propositions~\ref{prop:schubHNredA},~\ref{prop:schubHNredC},~\ref{prop:schubHNredB}, and~\ref{prop:schubHNredD}.
% \end{proof}
%
%
% $u,v,w\in \mathcal W$ for the corresponding ${\sf G}$ and
%
Consider the system \. $\mathcal{S}^{Y}(u,v,w)$ \. defined as in
Sections~\ref{sec:liftA} and~\ref{sec:liftBC}.
By Propositions~\ref{prop:schubHNredA},~\ref{prop:schubHNredC}, and~\ref{prop:schubHNredB},
the number of solutions of the system \. $\mathcal{S}^{Y}(u,v,w)$ \. is equal to the
corresponding Schubert coefficient \. $c_{u,v}^w(Y)$.

Observe that the size of \. $\mathcal{S}^{Y}(u,v,w)$ \.
is \ts $O(n^2)$ \ts in each case.  Thus, these systems are instances of \ts $\HNP$ \ts
by construction, with sets of variables \ts $(\bx,\bal,\bbe,\bpi,\brho)$ \ts and
parameters \ts $(\by,\bz)$.  Therefore, \ts $\neg\SV$ \ts is in \ts $\HNP$ \ts
in each case, as desired.  \qed

\medskip

{\small

\section{Final remarks} \label{s:finrem}

\subsection{} \label{ss:finrem-BW}
Since the Billey--Vakil algebraic system discussed in~$\S$\ref{sss:special-alg}
is the most advanced effort in the direction of this work, it is
worth elaborating as to why it cannot be used to make a reduction to $\HNP$.
There are three major issues, in fact.   First, note that system of constraints
\cite[Eq.~(8)]{BV08} on the variables to be generic has exponentially many equations.
Second, these equations are of the form \ts $\det(X)=0$, thus have exponentially
many terms.  Finally, the additional condition of the set of solutions being
$0$-dimensional is also quite delicate.  It has also been studied by Koiran
\cite{Koiran97} in the context of $\HN$, but the previous two issues again apply in
this case.

\subsection{} \label{ss:finrem-Pur}
Recall Purbhoo's algebraic system discussed in~$\S$\ref{sss:special-alg}, and given
explicitly in Lemma~\ref{lem:Pur}.  This system also has exponentially many constraints
of the form \ts $\det(X) \ne 0$, at least one of which has to be satisfied.
Each of these can be easily converted into a polynomial equation of the form
\ts  $\det(X)\cdot y = 1$ \ts with an auxiliary variable (cf.\ $\S$\ref{sec:appBorel}).
Unfortunately, these equations cannot be used to make a reduction to $\HNP$
for the same reason as above, since they have exponentially many terms.
This issue also appears in our original approach to type~$D$ that is given
in Appendix~\ref{sec:liftD}, see Remark~\ref{rem:type-D}.

\subsection{} \label{ss:finrem-complexity}
Note that Conjecture~\ref{conj:main-Schubert-vanishing-coNP} would follow
if the Knutson and Zinn-Justin sufficient condition for non-vanishing was shown
$\NP$-complete.  Similarly, it would be interesting to see if the Purbhoo
sufficient conditions for nonvanishing
and the
Billey--Vakil sufficient conditions for vanishing given
in~$\S$\ref{sss:special-NP} are $\NP$-complete and $\coNP$-complete, respectively. Note these results
by themselves would not imply Conjecture~\ref{conj:main-Schubert-vanishing-coNP}.

\subsection{} \label{ss:finrem-versions}
Compared to the original draft of this paper \cite{PR24o}, this version includes a new
Appendix~\ref{app:typeD}.\footnote{The extended abstract of \cite{PR24o} is to appear
in \emph{Proc.\ 57-th ACM STOC} (2025), 12~pp. }
The result in type~$D$ and for the BSS model are moved outside
of the main body of the paper to Appendix~\ref{sec:liftD} and~\ref{app:BSS},
respectively, while the example that used to be in the appendix is now
moved to~$\S$\ref{app:Schub}.  In a short companion paper \cite{PR24b},
we have a discussion of combinatorial and philosophical implications of
Main Theorem~\ref{t:main-AM}  in the context of
various number theoretic and complexity theoretic assumptions.

\subsection{} \label{ss:finrem-kInt}
Note that Main Theorem~\ref{t:main-AM} can be generalized in several directions.
Notably, the theorem can be extended to the vanishing problem of generic $k$-fold intersection
\[X_{u_1}\big(E^{(1)}_\bullet\big)\cap X_{u_2}\big(E^{(2)}_\bullet\big)\cap \. \dots \. \cap X_{u_k}\big(E^{(k)}_\bullet\big),\]
for every fixed $k$. In a different direction, Main Theorem~\ref{t:main-AM} can be
extended to enriched cohomology theories.\footnote{This work is in preparation.}

\vskip.7cm

\subsection*{Acknowledgements}
We are grateful to Kevin Purbhoo for sharing his insights which partially
motivated this paper.  We thank Sara Billey, Nickolas Hein, Minyoung Jeon,
Allen Knutson, Leonardo Mihalcea, Greta Panova, Oliver Pechenik,
Maurice Rojas, Mahsa Shirmohammadi, Frank Sottile, Avi Wigderson,
James Worrell, Weihong Xu, Alex Yong and Paul Zinn-Justin for interesting
discussions and helpful comments.  We are especially grateful to
David Speyer for suggesting we use the Cayley transform in types~$C/D$,
and for his insights leading to Appendix~\ref{app:typeD}.  We are thankful
to anonymous STOC reviewers for the careful reading of the paper
and helpful remarks.

This paper was written when the first author was a member at the
Institute of Advanced Study in Princeton,~NJ.  We are grateful for the hospitality.
The first author was partially supported by the NSF grant CCF-2302173.
The second author was partially supported by the NSF MSPRF grant DMS-2302279.
}

%\vskip1.2cm

\newpage
{\footnotesize

}

%%%%%%%%%%%%%%%%%%%%%%%%%%%%%%%%%%%%%%%%%%%%%%%%%%%%%%%%%%%%%%%%%%%%%%%%%%%%%%%%%%%%%%%%%%%%%
%%%%%%%%%%%%%%%%%%%%%%%%%%%%%%%%%%%%%%%%%%%%%%%%%%%%%%%%%%%%%%%%%%%%%%%%%%%%%%%%%%%%%%%%%%%%%
%%%%%%%%%%%%%%%%%%%%%%%%%%%%%%%%%%%%%%%%%%%%%%%%%%%%%%%%%%%%%%%%%%%%%%%%%%%%%%%%%%%%%%%%%%%%%
%%%%%%%%%%%%%%%%%%%%%%%%%%%%%%%%%%%%%%%%%%%%%%%%%%%%%%%%%%%%%%%%%%%%%%%%%%%%%%%%%%%%%%%%%%%%%
%%%%%%%%%%%%%%%%%%%%%%%%%%%%%%%%%%%%%%%%%%%%%%%%%%%%%%%%%%%%%%%%%%%%%%%%%%%%%%%%%%%%%%%%%%%%%
%%%%%%%%%%%%%%%%%%%%%%%%%%%%%%%%%%%%%%%%%%%%%%%%%%%%%%%%%%%%%%%%%%%%%%%%%%%%%%%%%%%%%%%%%%%%%
%%%%%%%%%%%%%%%%%%%%%%%%%%%%%%%%%%%%%%%%%%%%%%%%%%%%%%%%%%%%%%%%%%%%%%%%%%%%%%%%%%%%%%%%%%%%%
%%%%%%%%%%%%%%%%%%%%%%%%%%%%%%%%%%%%%%%%%%%%%%%%%%%%%%%%%%%%%%%%%%%%%%%%%%%%%%%%%%%%%%%%%%%%%
%%%%%%%%%%%%%%%%%%%%%%%%%%%%%%%%%%%%%%%%%%%%%%%%%%%%%%%%%%%%%%%%%%%%%%%%%%%%%%%%%%%%%%%%%%%%%
%%%%%%%%%%%%%%%%%%%%%%%%%%%%%%%%%%%%%%%%%%%%%%%%%%%%%%%%%%%%%%%%%%%%%%%%%%%%%%%%%%%%%%%%%%%%%
\newpage

\appendix

% \newpage

% \medskip

\section{Lifted formulation in type D}\label{sec:liftD}

Recall that our original approach to Main Theorem~\ref{t:main-AM} does
not extend to type~$D$.  An approach that does is given in Appendix~\ref{app:typeD}.
Nevertheless we decided to include a lifted formulation in type~$D$ similar to our
lifted formulations in types $A$, $B$ and~$C$.  There are several reasons for doing that.

First, we wanted to highlight the technical issue that failed the proof in this case.
Second, we use this construction in the BSS model given in Appendix~\ref{app:BSS}.
Even if the new construction Appendix~\ref{app:typeD} would also work,
this lifted formulation is more concise and is of independent interest.
Finally, it is worth expounding on a construction in each type before
presenting a uniform construction in all types.

\subsection{Definitions} \label{ss:type-D-def}
As in type~$C$, let \. $e_{\ol{n}},\ldots,e_{\ol{1}},e_1,\ldots,e_n$ \.
denote a basis in \ts $\mathbb{C}^{2n}$. Define the \defn{symmetric}
bilinear form on \ts $\mathbb{C}^{2n}$ \ts such that \.
$\langle e_i, e_j\rangle=\langle e_{\ol{i}}, e_{\ol{j}}\rangle=0$ \. and \. $\langle e_i, e_{\ol{j}}\rangle=\langle e_{\ol{j}}, e_{i}\rangle=\delta_{ij}$ for $i,j\in[n]$. As before, we use \ts $\ol{k}:=-k$ for $k\in[n]$.

In type~$D$, we have \. ${\sf G}={\sf SO}_{2n}(\cc)$ \. is the special orthogonal
group with respect to the bilinear form \ts $\langle \cdot,\cdot\rangle$.
The corresponding Weyl group \ts $\mathcal{W}_n^{+}$ \ts
is a subgroup of index two in the hyperoctahedral group \ts $\mathcal{W}_n\ts$
It can be viewed the group of signed permutations of $n$ letters with an
even number of sign changes.

An \defn{isotropic flag} \ts $F_\bullet$ is a flag of spaces
\[
\zero \subset F_{n-1}\subset F_{n-1}\subset \dots\subset F_{1}\subset F_{0}\subset \mathbb{C}^{2n},
\]
where \ts $\dim F_i=n-i$ for all $i\in[n]$, and each $F_i$ is \defn{isotropic}, i.e.\ $F_i\subseteq F_i^{\perp}$.

In type~$D$, we identify the flag variety \ts ${\sf G}/{\sf B}$ \ts
with the set of those isotropic flags $F_{\bullet}$ such that $E_0\cap F_0$
is even dimensional.

Recall that \emph{torus subgroup} \ts ${\sf T}$ \ts in this case consists
of diagonal matrices $T$ given by
\ts ${\sf diag}(T)=(a_1,\ldots,a_{2n})$ \. with \. $a_{n+i}=a_i^{-1}$ \ts
for all \ts $i\in [n]$.
% Borel subgroup \ts ${\sf B}$ \ts consists of the upper triangular matrices in \ts ${\sf G}$ \ts.
Now set ${\rm J}={\rm D}_{2n}$, the antidiagonal $2n\times 2n$ matrix.
% $${\rm J}\, := \, {\small \begin{pmatrix}
% 0 & {\rm D}_n\\
% {\rm D}_n & 0
% \end{pmatrix}},
% $$
% where \ts ${\rm D}_n=(d_{ij})_{i,j\in[n]}$ \ts is the antidiagonal matrix defined above.
% Borel subgroup \ts  ${\sf B}$ \ts can be realized as
%  the product of \ts ${\sf T}$ \ts and
% \[\bigg\{\begin{pmatrix}
% {\rm I}_n & {\rm D}_nS\\
% 0 & {\rm I}_n
% \end{pmatrix} \. : \. S \ \text{is skew-symmetric}\bigg\},
% \]
% with the torus ${\sf T}$, those diagonal matrices $T$ where if ${\sf diag}(T)=(a_1,\ldots,a_{2n})$, then $a_{n+i}=a_i^{-1}$ for $i\in [n]$.
%
% where \ts ${\rm D}_n=(d_{ij})_{i,j\in[n]}$ \ts is the antidiagonal matrix defined above.

 Representing flags $F_\bullet$ with matrices $\omega$, the isotropic condition
 translates to\footnote{Equation \. $\omega\cdot {\rm J}\cdot \omega^T \. = \. {\rm J}$ \.
gives \. $\det(\om) \in \{\pm 1\}$.  Ensuring that \. $\det(\om)=1$ \. is
a non-local parity condition that is unavoidable in this setup.}
\begin{equation}\label{eq:soForm}
\left\{\aligned
\omega\cdot {\rm J}\cdot \omega^T \. &= \. {\rm J}\\
\det(\omega) \. &= \. 1\ts.
\endaligned\right.
\end{equation}

Consider spaces $E_i:=\langle e_n,\ldots,e_{i}\rangle$ for $i\in[n]$. We extend this chain to a complete flag $E_{\bullet}$ by setting
\[
E_{\ol{i}}\. := \. E_{i+1}^{\perp} \. = \. \langle e_n,\ldots,e_1,e_{\ol{1}},\ldots,e_{\ol{i}}\rangle
\]
for each $i\in[n]$. We take \ts $E_{n}:=\zero$. Taking $w\in{\mathcal W}_n^+$, we can explicitly define Schubert cells in type~$D:$
\[
\Omega_w^{D}(E_{\bullet})\, = \, \big\{F_{\bullet} \ : \ \dim(F_i\cap E_j)= k_w(\ol{i},j)
 \text { for } \ 0\leq i\leq n-1, \. j\in[\ol{n},n]\cup\{\ol{0},0\}\big\},
\]
where \. $k_w(\ol{i},j):=\#\{r< \ol{i} \. : \. w_r> j\}$.
Then the Schubert variety in type~$D$ is the orbit closure which can
be defined as follows:
$$
X_w^{D}(E_{\bullet})\. = \. \ol{\Omega_w^{D}}(E_{\bullet}).
$$

To construct a generic $g\in{\sf G}$, we again use a generalization of the Cayley transform,
see \cite[$\S$10]{We66}, just as in Section~\ref{sec:liftBC}.
Let $M=(m_{ij})$ be a $2n\times 2n$ matrix such that $M{\rm J}=-{\rm J}M^T$, i.e., $M\in{\fg}$. These are precisely the matrices $ M$ such that
\begin{equation}\label{eq:skew}
    m_{ij} = - m_{(2n+1-j)(2n+1-i)}.
\end{equation}
  Now $M$ can be expressed in terms of those entries strictly above the antidiagonal.
  % Thus a generic element $Y$ can be represented in indeterminates $\by=\{y_{1},\dots,y_{t}\}$, where $t=2n^2+n$ and $y_{ij}$ are signed elements of $\by$.

Consider $g=({\rm I}_{2n}+M)^{-1}({\rm I}_{2n}-M)$, where $g=(g_{ij})$. It is straightforward to check $g$ satisfies Equation~\eqref{eq:spForm} since $M{\rm J}=-{\rm J}M^T$ and that such elements $g$ are dense in ${\sf G}$.
 Thus we may represent a generic element of ${\sf G}$ as a $2n\times 2n$ matrix $g=(g_{ij})$, where $({\rm I}_{2n}+M)\cdot g=({\rm I}_{2n}-M)$ constructed as above. Here we treat $m_{ij}$ where $i+j\leq 2n+1$ as parameters. We treat $g_{ij}$ as variables constrained by $({\rm I}_{2n}+M)\cdot g=({\rm I}_{2n}-M)$.
 % These constraints translate to $4n^2$ linear equations in $\{\pi_{ij}\}$ with coefficients as linear terms in $\mathbb{Z}[\by]$. Relabel $\{\pi_{ij}\}$ as $\bpi=\{\pi_1,\ldots, \pi_{r}\}$ and $\{\rho_{ij}\}$ as $\brho=\{\rho_1,\ldots, \rho_{r}\}$, where $r=4n^2$.

 Using this construction, we obtain generic elements $\pi=(\pi_{ij})$ and $\rho=(\rho_{ij})$ of ${\sf G}$, where $({\rm I}_{2n}+Y)\cdot \pi=({\rm I}_{2n}-Y)$ and $({\rm I}_{2n}+Z)\cdot \rho=({\rm I}_{2n}-Z)$
for $Y,Z$ matrices in parameters $\by=\{y_{1},\dots,y_{t}\}$ and $\bz=\{z_{1},\dots,z_{t}\}$, respectively, satisfying Equation~\eqref{eq:skew} Here $t=2n^2+n$. Let $\bpi=\{\pi_{ij}\}_{i,j\in[2n]}$ $ \brho=\{\rho_{ij}\}_{i,j\in[2n]}$ be sets of variables.
Take \ts $F_{\bullet}:=\pi E_{\bullet}$ \ts and \ts $G_{\bullet}:=\rho E_{\bullet}$.
%%%

% \Colleen{old below}

% Using the Bruhat decomposition, a generic matrix of ${\sf G}$ is of the form $B_1 \dot{w_\circ} B_2$, where $B_1,B_2\in {\sf B}$ and $\dot{w_\circ}$ is the preimage of $w_\circ$ in $N_{\sf G}(\sf T)$.
% Thus we may represent a generic element as an $2n\times 2n$ matrix $\pi=(y_{ij})$ in indeterminates $\by=\{y_{1},\dots,y_{t}\}$, where $t=n^2+n$ and $y_{ij}$ are Laurent polynomials in $\by$ with $O(n^2)$ many terms. We can similarly obtain a generic element $\rho=(z_{ij})$
% in indeterminates $\bz=\{z_{1},\dots,z_{t}\}$.
% Take $F_{\bullet}:=\pi E_{\bullet}$ and $G_{\bullet}:=\rho E_{\bullet}$.
% for $\pi=(y_{ij})$ and $\rho=(z_{ij})$ matrices of indeterminates satisfying Equation~\eqref{eq:soForm}. Let \. $\by=\{y_{1},\dots,y_{t}\}$ \.
% and \. $\bz=\{z_{1},\dots,z_{t'}\}$ \. denote the variables appearing therein.
% % Note that any $y_{ij}$ will be expressions in $\mathbb{Z}(y)$, where all coefficients are $\{0,\pm1\}$, and similarly for $z_{ij}$.
% Note that the expressions $y_{ij}$ are in $\mathbb{Q}(\by)$, where coefficients that appear have polynomial size, and analogously
% for~$z_{ij}\ts$.
% \Colleen{will the ever be $-1$?}

Pick $u,v,w\in {\mathcal{W}}_n^+$. We consider the system for solving
$$
X_{{u}}^{D}(F_{\bullet}) \. \cap \. X_{{v}}^{D}(G_{\bullet}) \. \cap \. X_{{w}}^{D}(E_{\bullet})\ts.
$$
% Let $d$ denote the last descent of $w$.
The \defn{Stiefel coordinates} \ts $\mathcal{X}_{{w}}^{D}$ \ts for the Schubert cell
\ts $\Omega_{{w}}^{D}$, are those $2n\times 2n$ matrices $(m_{ij})$ such that
\begin{align}
m_{ij}=
    \begin{cases}
        1  & \text{ if } i={{w}}_j\\
        0  & \text{ if } i<w_j \text{ or } w^{-1}_i<j \\
        x_{ij}  & \text{ otherwise,}\\
    \end{cases}
\end{align}
and such that Equation~\eqref{eq:soForm} is satisfied.
% $x_{ij}$ are such that the isotropic condition is satisfied. In particular, for $\rho\in \mathcal{X}_{{w}}^{D}$ such that
% \begin{align}\label{eq:soForm}
% \begin{split}
% \rho\rho^T &= Id_{2n}\\
% \det(\rho) &= 1.
% \end{split}
% \end{align}

We relabel matrix entries \ts $\{x_{ij}\}$ \ts as \ts $\bx=\{x_1,\ldots, x_{s}\}$, \. $s\le \binom{2n}{2}$,
reading across rows, top to bottom.  To every \ts $\omega\in \mathcal{X}_{{w}}^{D}$ with entries ${\bx}$, we associate the flag given by
the span of the column vectors $e_i(\bx)$ of $\omega$.
Take
\begin{align*}
    \bal \. &:= \. \{\alpha_{ij} \. : \. j <i,  \ \text{ where } \ {u}_j<{u}_i\}, \\
    \bbe \. &:= \. \{\beta_{ij} \. : \. j <i, \ \text{ where } \ {v}_j<{v}_i\}.
\end{align*}
For $i\in[\ol{n},n]$, \. $i\ne 0$, define the $2n$-vectors \ts $g_i(\bx,\bal)$ \ts and \ts $h_i(\bx,\bbe)$ \ts with entries in  \ts $\mathbb{Z}[\bx,\bal,\bbe]$  \ts such that
\begin{align*}
    g_i(\bx,\bal)\. &:=\. e_i(\bx)\. + \. \sum_{\substack{j <i \\ {u}_j<{u}_i}}\. \alpha_{ij} \. e_j(\bx)\ts, \\
    h_i(\bx,\bbe)\. &:= \. e_i(\bx)\. + \. \sum_{\substack{j <i \\ {v}_j<{v}_i}}\. \beta_{ij}\. e_j(\bx)\ts.
\end{align*}

\smallskip

\subsection{The construction in type~$D$} \label{ss:type-D-construction}

We consider \ts $\omega\in \mathcal{X}_{{w}}^{D}$ with  column vectors $e_i(\bx)$.  By construction, the corresponding flag $\omega_{\bullet}$ lies in $\Omega_w^{D}(E_{\bullet})$ after imposing Equation~\eqref{eq:soForm}. Then construct $g_i(\bx,\bal)$ and $h_i(\bx,\bbe)$.

Extending Proposition~\ref{prop:liftIntBC}, $\omega_{\bullet}\in \Omega_u^{D}(F_{\bullet})$ if and only if $i\in[\ol{n},n]$, \. $i \ne 0$, we have \. $g_i(\bx,\bal)\in F_{{u}_i}-F_{{u}_i+1}$ for $\bal$ unique.
To impose this condition,
we require
\begin{align}\label{eq:mainEq1D}
    (\pi_{j1} \hspace*{0.2cm} \pi_{j2}\hspace*{0.2cm}\cdots\hspace*{0.2cm} \pi_{j,2n})\cdot g_i(\bx,\bal) \. = \.0\ts, \text{ for each \ts $j<{u}_i$\ts.}
\end{align}

Similarly, $\omega_{\bullet}\in \Omega_v^{D}(G_{\bullet})$ if and only if for every \. $i\in[\ol{n},n]$, \. $i \ne 0$, we have \.
$h_i(\bx,\bbe)\in G_{{v}_i}-G_{{v}_i+1}$ for $\bbe$ unique.
For this last condition, we need
\begin{align}\label{eq:mainEq2D}
  (\rho_{j1} \hspace*{0.2cm} \rho_{j2}\hspace*{0.2cm}\cdots\hspace*{0.2cm} \rho_{j,2n})\cdot h_i(\bx,\bbe)\. = \.0\ts \text{, for each \ts $j<{v}_i$\ts.}
\end{align}

%
% \Colleen{check indices}
% Let $d$ denote the maximal descent among $\widetilde{u},\widetilde{v},w$.

Let $\mathcal{S}^{D}(u,v,w)$ denote the system given by Equations~\eqref{eq:soForm},~\eqref{eq:mainEq1D}, and~\eqref{eq:mainEq2D} along with the constraints $(1+Y)\cdot\pi=(1-Y)$ and $(1+Z)\cdot\rho=(1-Z)$.
Equation~\eqref{eq:soForm} introduces $12n^2$ quadratic equations and one degree $2n$ equation in fewer than $4n^2$ variables.
Equations~\eqref{eq:mainEq1D} and~\eqref{eq:mainEq2D} add at most $2\binom{2n}{2}$ cubic equations and $2\binom{2n}{2}$ variables with $0, 1$ coefficients.
The remaining constraints on $\pi$ and $\rho$ each introduce $4n^2$ linear equations with coefficients in $\mathbb{Z}[\by,\bz]$.

% Note  $m\leq 3\binom{n}{2}$.
% We will write these equations as $\{f_i\}_{i\in[m]}$, where $f_i\in K[x,\alpha, \beta]$ and $K=\mathbb{Z}(y,z)$.

% Define $w_0$ to be the long word in $\mathcal{W}_n$ and $w_{0\sf{P}}$ to be the long word corresponding to the appropriate parabolic subgroup ${\sf P}$ for $u,v,w$. Take $\widetilde{w}:=(w_0w w_0) w_{0\sf{P}}$.

Let $c_{u,v}^w(D)$ denote the Schubert coefficients in type~$D$, defined as above or as
structure coefficients in Equation~\eqref{eq:SchubStructureCoh} for \ts ${\sf G}={\sf SO}_{2n}$.

\smallskip

\begin{prop}\label{prop:schubHNredD}
  The system \. $\mathcal{S}^{D}(u,v,w_\circ w)$ \. has \. $c_{u,v}^w(D)$ \. solutions over \. $\overline{\mathbb{C}(\by,\bz)}$.
\end{prop}

\smallskip

The proof follows verbatim the proof of Proposition~\ref{prop:schubHNredA}.
\smallskip

\begin{rem}\label{rem:type-D}
    Let us emphasize that the determinantal equation in~\eqref{eq:soForm} gives the system \.
    $\mathcal{S}^{D}(u,v,w)$ \. of exponential size. Thus the proof of Lemma~\ref{l:SV-HNP}
    for types \ts $A$, \ts $B$ and $C$ \ts does not immediately extend to the type~$\ts D$ \ts setting.
    We do this in Appendix~\ref{app:typeD}.
\end{rem}

% \newpage

\medskip

\section{Blum--Shub--Smale model} \label{app:BSS}

In this section we study complexity of the Schubert vanishing problem
in a nonstandard model of computation pioneered by Blum, Shub and Smale.
We removed this section from the main body of the paper to avoid the
destruction from the Main Theorem~\ref{t:main-AM}.  We believe
these results are of independent interest, as they illustrate the
power of computation over fields when it comes to problems in
Algebraic Combinatorics.

\smallskip

\subsection{Computation over fields} \label{ss:BSS}
The \defn{Blum--Shub--Smale} (BSS) \defn{model of computation}
\cite{BSS89, BCSS98} was
introduced to describe computations over general fields~$\kk$.
It can be viewed as analogous to (Boolean) Turing machine,
but in this case the registers that can store arbitrary numbers
from~$\kk$, and rational functions over $\kk$ can be computed
in a single time step.
% We refer to \cite{BCSS98} for
% an extensive introduction and overview of the area.

One can then similarly define complexity classes \ts $\poly_\kk$\ts,
\. $\NP_\kk$\ts, etc.  Although the original paper works largely
over~$\rr$,  many results extend to general fields.
Theorem~\ref{t:intro-BSS} is one of the key results in the area.
%\smallskip
%
The counting complexity classes for computations over~$\kk$
were introduced by  B\"urgisser and Cucker \cite{BC06}.
Notably, denote by \ts $\SP_\kk$ \ts the class of counting
functions for the number of accepting paths of problems in \ts $\NP_\kk$.

The relationships between these classes in many ways resemble the classical model.
Notably, B\"urgisser and Cucker prove the following analogue of Toda's theorem (assuming GRH):
$$\SP_\cc \. \subseteq \. \FP_\cc^{\NP_\cc} \ \ \Longrightarrow \ \ \PH \. = \. \Sigmap_4\.,
$$
i.e., the \emph{classical} polynomial hierarchy collapses to the fourth level \cite[Cor.~8.7]{BC06}.

\smallskip

\subsection{Schubert vanishing in BSS model} \label{ss:intro-BSS}
Motivated by the algebro-geometric considerations, we consider the problem
\ts $\HN_\cc$ \ts which asks if the polynomial system \eqref{eq:HN-system}
with coefficients in~$\cc$ has a solution over \ts $\mathbb{C}$.
This is a natural problem in the BSS model
of computation described above.
%
%\smallskip
%
One of the key insights of the Blum--Shub--Smale theory is the following
remarkable theorem:

\smallskip

\begin{thm}[{\cite[$\S$5.4]{BCSS98}}{}]\label{t:intro-BSS}
\. $\HN_\cc$ \ts is in \ts $\NP_\cc$.  Moreover, $\HN_\cc$ \ts is \ts $\NP_\cc$-complete.
\end{thm}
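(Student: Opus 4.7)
The plan is to handle the two assertions separately: membership in $\NP_\cc$ via a direct algebraic witness, and $\NP_\cc$-hardness via an arithmetization of a generic $\NP_\cc$ verifier, in the spirit of the Cook--Levin theorem but adapted to the BSS setting over~$\cc$.

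For the membership part, the natural witness for an instance $(f_1,\ldots,f_m) \in \cc[x_1,\ldots,x_s]$ is a common zero $z = (z_1,\ldots,z_s) \in \cc^s$ itself. In the BSS cost measure the witness size is the number of complex coordinates, namely $s$, which is bounded by the input size. Verification amounts to evaluating each polynomial $f_i$ at $z$ (using e.g.\ a straight-line program extracted from the input) and testing equality with $0$; this takes polynomially many BSS arithmetic operations and $=0$-tests. Hence $\HN_\cc \in \NP_\cc$.

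For $\NP_\cc$-hardness, let $L \in \NP_\cc$ be decided by a BSS verifier $V$ running in time $p(n)$ with witness length $q(n)$, so that $x \in \cc^n$ lies in $L$ iff there exists $y \in \cc^{q(n)}$ with $V(x,y)$ accepting. The plan is to construct, in BSS polynomial time, a polynomial system $\mathcal{F}_x$ over~$\cc$ whose variables are the witness coordinates $y_j$, the register contents $z_{i,t}$ at each time step $t \le p(n)$, and a few auxiliary variables per step, and whose equations enforce a valid accepting trajectory. Arithmetic and assignment instructions translate into bilinear or linear identities between consecutive time steps. The delicate step is a branching instruction of the form ``if $a = 0$ then go to $\alpha$ else go to $\beta$'', which I would encode by adjoining a branch indicator $b$ and an auxiliary $a'$ with the two equations $a\cdot b = 0$ and $b + a\cdot a' = 1$; this forces $b = 1$ when $a = 0$ and $b = 0$ with $a' = 1/a$ when $a \ne 0$, after which the successor register can be written as a linear combination $b\cdot(\cdots)_\alpha + (1-b)\cdot(\cdots)_\beta$. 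A final equation forces the output register to the accept value. By construction, $\mathcal{F}_x$ is solvable over $\cc$ iff $V(x,\cdot)$ has an accepting run, i.e.\ iff $x \in L$, giving the required reduction.

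The main obstacle I foresee is producing a faithful polynomial encoding of BSS control flow that has polynomial size and admits no spurious solutions over~$\cc$. In particular, the branch-inverse variable $a'$ is genuinely underdetermined when $a = 0$, so one must ensure that $a'$ appears only through products with~$a$, and that the two branches cannot be ``blended'' by non-$\{0,1\}$ values of~$b$. A secondary point is that the input $x$ itself consists of arbitrary complex numbers, which must enter the produced system as coefficients; this is harmless because $\HN_\cc$ permits complex coefficients by definition, and is precisely why the theorem is formulated in the BSS framework over~$\cc$ rather than over~$\qqq$, where generic complex input data cannot be represented in finite bit-length.
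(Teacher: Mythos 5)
The paper does not prove this statement at all --- it is quoted verbatim from \cite[$\S$5.4]{BCSS98} as a known black-box result, so there is no in-paper argument to compare against. Your reconstruction is essentially the standard proof from that source and is correct: a common zero in $\cc^s$ serves as the polynomial-size witness for membership, and hardness follows from arithmetizing the trajectory of a generic $\NP_\cc$ verifier, where your gadget \. $a\cdot b=0$, \ $b+a\cdot a'=1$ \. correctly forces $b$ to be the exact indicator of the zero-test (so the branches cannot be blended) while the underdetermined $a'$ occurs only inside the product $a\cdot a'$ and hence causes no spurious solutions.
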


\smallskip

This inclusion combined with the system of polynomial equations constructed in the
proof of Lemma~\ref{l:SV-HNP} gives the following result:

\smallskip

\begin{thm} \label{t:main-SV-decision}
\. $\neg\SV$ \. is in \. $\NP_\cc$ \ts in types~$A$, $B$, $C$ and~$D$.
\end{thm}

\smallskip

While the proof in types~$A$, $B$ and~$C$ is straightforward from
the proof of Lemma~\ref{l:SV-HNP}, there is a technical issue in type~$D$.
As we mentioned above, we follow the approach in the lemma,
one of the equations in type~$D$ has exponential size (Remark~\ref{rem:type-D}).
Nevertheless, it can be \emph{evaluated} in poly-time over~$\cc$, and thus can be
used to derive the result.

\smallskip

Our next result is a different kind of inclusion
and also holds in all classical types.

\smallskip

\begin{thm}\label{t:main-SV-BPP}
\. $\neg\SV$ \. is in \. $\P_\rr$ \ts in types~$A$, $B$, $C$ and~$D$.
\end{thm}

\smallskip

The proof of the theorem uses a completely different approach from that in
Lemma~\ref{l:SV-HNP}.  Instead, we use deep results in \cite{Belkale06,Purbhoo06}
to give a linear algebraic formulation of the Schubert vanishing
problem which quickly implies the result.
Curiously, this approach cannot be used to give an
independent proof of Lemma~\ref{l:SV-HNP}.

\begin{rem}\label{r:SV-BPP}
Note that \ts $\NP_\rr$ \ts is likely much more powerful than \ts $\NP_\cc$ \ts
since the former is best known as the \emph{existential theory of the reals}:  \ts
$\exists\rr = \NP_\rr$ \ts and likely not in~$\PH$.  In fact, \ts
$\NP \subseteq\exists\rr \subseteq\PSPACE$ \ts are the best known lower
and upper bounds \cite{Sch10}.  Thus, classes \ts $\NP_\cc$ \ts and
\ts $\P_\rr$ \ts are not easily comparable.
\end{rem}

% \begin{rem}\label{r:main-SV-decision}
% For a field~$\kk$, the difference between \ts $\P_\kk$ \ts and \ts $\BPP_\kk$ \ts
% is that one is allowed to use random bits \cite[$\S$17.1]{BCSS98}.
% It is known that \ts $\BPP_\rr = \P_\rr$\ts, see e.g.\ \cite[$\S$17.6]{BCSS98}.
% The proof is somewhat involved and seems to use semi-algebraic sets over~$\rr$,
% and thus unlikely to extend verbatim to~$\cc$.  It would be very interesting
% to see if \ts $\BPP_\cc = \P_\cc$\ts, in which case Theorem~\ref{t:main-SV-BPP}
% is a strengthening of Theorem~\ref{t:main-SV-decision} and an optimal result
% in the BSS model.
% \end{rem}

\medskip

\subsection{Back to Schubert coefficients} \label{ss:intro-BSS-counting}
%
% We now turn to the framework for counting complexity for algebraic
% computations introduced by  B\"urgisser and Cucker \cite{BC06}.
Let \ts $\SP_\cc$ \ts denote the class of counting functions for the number
of accepting paths of problems in \ts $\NP_\cc$.
Denote by \ts $\SHN$ \ts
and \ts  $\SHN_\cc$ \ts the problems of counting solutions of
problems in \ts $\HN$ \ts and \ts $\HN_\cc\ts$, respectively.
By analogy with the Blum--Shub--Smale Theorem~\ref{t:intro-BSS},
B\"urgisser and Cucker prove the following:

\smallskip

\begin{thm}[{\cite[Thm~3.4(i)]{BC06}}{}]\label{t:intro-BC}
\. $\SHN_\cc$ \ts is in \ts $\SP_\cc$.  Moreover, $\SHN_\cc$ \ts is \ts $\SP_\cc$-complete.
\end{thm}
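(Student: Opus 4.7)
The plan is to prove both claims in turn. For the containment $\SHN_\cc \in \SP_\cc$, I would construct an $\NP_\cc$ machine $M$ which, on input a polynomial system $f_1, \ldots, f_m \in \cc[x_1, \ldots, x_s]$, nondeterministically guesses a point $\xi \in \cc^s$ (using $s$ complex-valued nondeterministic bits) and then deterministically evaluates each $f_i(\xi)$, accepting iff every output equals $0$. Since the verification phase contains no internal nondeterminism, each zero $\xi$ contributes exactly one accepting path; thus $\#M(f_1,\ldots,f_m)$ equals the number of common zeros of the system, giving the containment.

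For $\SP_\cc$-completeness, I would adapt the Cook--Levin reduction to the BSS setting. Given any $\NP_\cc$ machine $N$ whose accepting-path count computes a target function $g \in \SP_\cc$, I would encode, on input $x$ with nondeterministic guess $y \in \cc^p$, the computation of $N(x,y)$---consisting of $T = \mathrm{poly}(|x|)$ steps of either arithmetic form $r_{t+1} = r_{t'} \circ r_{t''}$ with $\circ \in \{+,-,\times,/\}$, or equality-branch form ``is $r_t = 0$?''---as a polynomial system over $\cc$. Introduce one complex variable per register slot ever written, plus auxiliary inverse variables: each arithmetic step yields a single polynomial equation, and division $r_{t+1} = r_{t'}/r_{t''}$ is handled through an inverse variable $v$ subject to $r_{t''} v - 1 = 0$ together with $r_{t+1} r_{t''} - r_{t'} = 0$.

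The delicate part is the branching. For each equality test on a register value $r_{t,j}$, I would introduce a selector $b_t$ and an inverse witness $u_t$ and impose the three constraints
\begin{equation*}
b_t(b_t - 1) \ts = \ts 0, \qquad b_t \cdot r_{t,j} \ts = \ts 0, \qquad (1 - b_t)\bigl(u_t \cdot r_{t,j} - 1\bigr) \ts = \ts 0.
\end{equation*}
A short case analysis shows that these force $b_t = 1$ precisely when $r_{t,j} = 0$ and otherwise pin $u_t = 1/r_{t,j}$ uniquely, so the selector faithfully records which branch is taken. Register updates downstream of the branch are then written as $b_t \cdot (\text{``yes'' value}) + (1 - b_t) \cdot (\text{``no'' value})$, and acceptance is encoded by one further polynomial equation. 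The resulting system has size polynomial in $T$, and its complex solutions biject with accepting computations of $N(x, \cdot)$, so its solution count equals $g(x)$.

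The main obstacle lies in this branching gadget: one must guarantee that the auxiliary variables $b_t, u_t, v$ are \emph{rigidly} determined by the computation history, so that no accepting path is double-counted and no parasitic solutions appear. The three-equation encoding above achieves precisely this while remaining polynomial in size, and combined with the membership argument it yields $\SP_\cc$-completeness of $\SHN_\cc$.
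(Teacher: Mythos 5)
This theorem is not proved in the paper at all: it is quoted verbatim from B\"urgisser--Cucker \cite[Thm~3.4(i)]{BC06}, so there is no internal proof to compare against. Judged on its own terms, your overall strategy is the standard one (and essentially the one in the cited source): guess-and-check gives membership in $\SP_\cc$, and a parsimonious BSS Cook--Levin encoding of an $\NP_\cc$ computation as a polynomial system gives hardness. The membership half is fine, granting the convention that $\SP_\cc$-functions take values in $\nn\cup\{\infty\}$ so that positive-dimensional solution sets are counted as~$\infty$.

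The hardness half, however, has a genuine gap exactly at the point you flag as delicate. Your branching gadget is not rigid. Take the case $r_{t,j}=0$: the constraint $(1-b_t)(u_t r_{t,j}-1)=0$ becomes $(1-b_t)(-1)=0$, which forces $b_t=1$ as you say, but then the third constraint reads $0\cdot(u_t\cdot 0-1)=0$ and is satisfied for \emph{every} $u_t\in\cc$. So each accepting computation that takes a ``yes'' branch at some test contributes a full complex line of solutions, not one point; the reduction is not parsimonious and the system's solution count is $\infty$ whenever $g(x)>0$ and any accepting path ever sees a zero test value. The fix is standard: replace the third equation by the pair
\begin{equation*}
u_t\ts r_{t,j} \ = \ 1-b_t, \qquad b_t\ts u_t \ = \ 0,
\end{equation*}
which pins $(b_t,u_t)=(1,0)$ when $r_{t,j}=0$ and $(b_t,u_t)=(0,1/r_{t,j})$ otherwise (and then $b_t(b_t-1)=0$ and $b_t r_{t,j}=0$ become redundant). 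With that repair, and the routine check that all register and auxiliary variables are determined by the guess $y$, your argument goes through; as written, it does not establish completeness.
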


\smallskip

We refer to \cite{BC13} for a thorough treatment of this theory.
Our last result in this model is the following inclusion.

\smallskip

\begin{thm}\label{t:main-Schubert-counting}
\. $\SC$ \. is in \. $\SP_\cc$ \ts in types~$A$, $B$, $C$ and~$D$.
\end{thm}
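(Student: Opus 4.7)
The plan is to parallel the proof of Theorem~\ref{t:main-SV-decision}, but count the solutions of the lifted polynomial system rather than merely detect existence. The starting point is the same algebro-geometric characterization used in the proof of Lemma~\ref{l:SV-HNP}: by Kleiman's transversality theorem, for generic translations $g_1,g_2,g_3$ of three Schubert varieties in the appropriate classical type, the triple intersection is transverse and reduced, and when the codimensions are complementary its cardinality equals $c^w_{u,v}$ (otherwise the intersection is empty and $c^w_{u,v}=0$). I would then import verbatim the lifted formulations from Sections~\ref{sec:liftA}, \ref{sec:liftBC}, \ref{sec:liftD}, which encode membership in a translated Schubert variety as a system of bilinear equations, to produce a polynomial system whose zero locus over $\cc$ is in bijection with this generic triple intersection.

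The crucial point is that transversality upstairs forces each intersection point to correspond to a simple solution of the lifted system downstairs, so the number of complex zeros equals $c^w_{u,v}$ on the nose, with no multiplicity corrections and no over- or under-counting introduced by the lift. The parameters of the lifted formulation (matrix entries encoding the translating flags) are specialized to generic complex constants, which is legal in the BSS model since arbitrary constants in $\cc$ are permissible inputs; Zariski-openness of the transversality locus ensures that almost any choice of constants works. This exhibits a BSS poly-time reduction from $\SC$ to $\SHN_\cc$, and so the inclusion $\SC\in\SP_\cc$ in types $A$, $B$, $C$ follows immediately from Theorem~\ref{t:intro-BC}.

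For type~$D$ the lifted formulation contains one equation of exponential bit-size coming from the parity constraint discussed in Remark~\ref{rem:type-D}, so we cannot literally feed the system to $\SHN_\cc$ as a list of polynomials with bounded coefficients. The resolution is exactly the one used for Theorem~\ref{t:main-SV-decision}: that equation admits a polynomial-time straight-line evaluation over~$\cc$, which is all the BSS framework actually requires of an instance of $\HN_\cc$. The circuit-represented instance still has its complex zero count equal to $c^w_{u,v}$, and the proof underlying Theorem~\ref{t:intro-BC} applies to circuit-presented polynomial systems, yielding $\SC\in\SP_\cc$ in type~$D$ as well.

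The main obstacle I expect to navigate carefully is the ``no multiplicity'' claim: one must confirm that transversality of the intersection of Schubert varieties transfers to simplicity of the solutions of the lifted system after the auxiliary variables are introduced. I anticipate this will follow because the lift presents each Schubert variety scheme-theoretically, as a reduced fibre of an affine projection, so smoothness of the intersection upstairs implies reducedness of the lifted zero-scheme; but this is the one place where the argument must do genuine work rather than quote prior results in the paper.
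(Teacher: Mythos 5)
Your proposal follows essentially the same route as the paper: evaluate the lifted systems $\mathcal{S}^{Y}(u,v,\widetilde{w})$ at generic complex parameters (legal in the BSS model), invoke the solution-count propositions, handle the type~$D$ determinant by direct evaluation rather than expansion, and conclude via Theorem~\ref{t:intro-BC}. The reducedness worry you flag at the end is exactly what Propositions~\ref{prop:schubHNredA}, \ref{prop:schubHNredC} and \ref{prop:schubHNredD} supply (the generic ideal is zero-dimensional of degree $c^w_{u,v}$), so that step is covered by quoting them.

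The one genuine slip is type~$B$. You assert that importing the lifted formulations verbatim gives a system whose number of complex zeros ``equals $c^w_{u,v}$ on the nose'' in types $A$, $B$, $C$. But the paper defines $\mathcal{S}^{B}(u,v,w):=\mathcal{S}^{C}(u,v,w)$, and by Proposition~\ref{prop:schubHNredB} that system has $2^{s(u)+s(v)-s(w)}\ts c^w_{u,v}(B)$ solutions, not $c^w_{u,v}(B)$: the odd orthogonal and symplectic flag varieties share a Weyl group but not their intersection numbers, and the discrepancy is the Billey--Haiman factor \eqref{eq:Schub-BC}. For the vanishing theorem this power of two is harmless, which is presumably why it slipped past, but for a counting statement it is the whole point. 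You need the extra step the paper takes: compute $c^w_{u,v}(C)$ in $\SP_\cc$ as in type~$C$ and then convert via \eqref{eq:Schub-BC} (and, if one is being careful, justify that multiplying or dividing by the relevant power of $2$ keeps you inside $\SP_\cc$). With that correction the argument matches the paper's.
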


\smallskip

The proof of this theorem uses the inclusion in Theorem~\ref{t:intro-BC}
and  follows the approach in the proof of Theorem~\ref{t:main-SV-decision}.
Conjecture~\ref{conj:SC-unary} suggests that this result is likely optimal
in the BSS counting hierarchy $\CH_\cc$.

% Now we aim to position these classes with respect to the Turing model.
% For a complexity class $X$ over $\mathbb{C}$, its \emph{Boolean part} is the complexity class
% \[{\sf BP}(X):=\{p\cap\{0,1\}^{\infty} : p\in X\},\]
% see e.g.\ \cite{Koiran97}.
% Let \. $\GCC \. :={\sf BP}(\SP_\cc)$ \. denote the class of
% \emph{geometric counting complex problems}.  In the context of
% \ts $\SHN_\cc$\ts, here we are counting on solutions of \eqref{eq:HN-system}
% which have $0/1$ coordinates.  It was shown in \cite{BC06}, that
% \[\SP \, \subseteq  \, \GCC \, \subseteq \, \FPSPACE \ts.
% \]
%

\subsection{Proof of  Theorem~\ref{t:main-SV-decision} and Theorem~\ref{t:main-Schubert-counting}}
% Here we follow the framework for counting complexity for algebraic computations as designed by  B\"urgisser--Cucker \cite{BC06}.
% % The class ${\sf NP}_{\mathbb{C}}$ consists of the decision problems for which yes answers can are checkable over $\mathbb{C}$ in polynomial time.
% The counting class $\#{\sf P}_{\mathbb{C}}$ are the functions for which deciding accepting paths is in ${\sf NP}_{\mathbb{C}}$.
%
% \begin{prop}\label{prop:NPC}
%
 Fix type \. $Y\in\{A,C,D\}$ \. and consider the corresponding system
 \. $\mathcal{S}^{Y}(u,v,w)$.  Take an evaluation of this system at generic values
 \ts $\ov y$ \ts and \ts $\ov z$.  The size of the system is $O(n^2)$.
 Note that in type \ts $D$ \ts we do not expand the determinant and take
 an evaluation directly.

By Propositions~\ref{prop:schubHNredA},~\ref{prop:schubHNredC}, and~\ref{prop:schubHNredD},
deciding if a given flag \ts $\widetilde{F}_\bullet$ \ts is a solution to \ts
$\mathcal{S}^{Y}(u,v,w)$ \ts is in \ts ${\sf NP}_{\mathbb{C}}$ \ts.
Now Theorem~\ref{t:intro-BC} implies the result for all \. $Y\in\{A,C,D\}$.
In type \ts $B$, the result follows from type~$C$ and \eqref{eq:Schub-BC}.
This completes the proof of Theorem~\ref{t:main-SV-decision}.

Finally, Theorem~\ref{t:main-Schubert-counting} follows verbatim the argument
above since the number of solutions of \ts
$\mathcal{S}^{Y}(u,v,w)$ \ts is \emph{equal} to the corresponding
Schubert coefficient \ts $c^w_{u,v}(Y)$ \ts for \. $Y\in\{A,C,D\}$.
In type~$B$, the result follows from type~$C$ and \eqref{eq:Schub-BC}.  \qed

\medskip

\subsection{Proof of Theorem~\ref{t:main-SV-BPP}}\label{sec:LAform}
%
% However,
% using a linear algebraic formulation of the problem \cite{Belkale06,Purbhoo06}, we find a stronger result for all classical types.
%
Fix type \ts $Y\in\{A,B,C,D\}$ \ts and let \ts ${\sf G}= {\sf G}_Y$ \ts be the
corresponding reductive group.
In each case, ${\sf G}$ is a matrix group lying in an ambient vector space~$V$.
Let ${\sf N}$ \ts denote the subgroup of unipotent matrices, so we have
$$
{\sf N}\subset {\sf B}\subset{\sf G} \subset V.
$$
Let ${\mathfrak n}$ denote the Lie algebra of ${\sf N}$.  We think of
${\mathfrak n}$ as a subspace of~$V$.

For $w\in \mathcal{W}$, define $Z_w:={\mathfrak n}\cap (w{\sf B}_{-}w^{-1})$.
Equivalently, $Z_w$ is the subspace of ${\mathfrak n}$ generated by basis vectors
$e_\al$ for $\al$ a positive root in the root system for ${\sf G}$.

\smallskip

\begin{lemma}\cite[Corollary~2.6]{Purbhoo06}\label{lem:Pur}
Let \. $Y\in\{A,B,C,D\}$.  For generic \ts $\rho,\omega,\tau\in{\sf N}$, we have:
    \[c_{u,v}^w(Y)\. > \. 0 \quad \text{ if and only if } \quad
    \rho Z_u\rho^{-1}+\omega Z_v\omega^{-1}+\tau Z_{w_\circ w}\tau^{-1} \, = \, {\mathfrak n}.
    \]
Here the sum is the usual sum of vector subspaces of~$V$.
\end{lemma}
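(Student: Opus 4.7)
The plan is to prove the lemma via Kleiman's transversality theorem combined with a tangent-space computation at a distinguished base point, following the framework of Purbhoo's thesis.

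First, I would apply Kleiman's transversality theorem to~\eqref{eq:SchubStructureInt}: for generic flags $E_\bullet$, $F_\bullet$, $G_\bullet$, the intersection $X_u(E_\bullet)\cap X_v(F_\bullet)\cap X_{\widetilde{w}}(G_\bullet)$ is transverse; by construction of $\widetilde{w}$, the codimensions satisfy $\ell(u)+\ell(v)+\ell(\widetilde{w})=\dim({\sf G}/{\sf B})$, so the intersection is zero-dimensional of size $c_{u,v}^w(Y)$ whenever nonempty. Using ${\sf B}$-invariance of Schubert varieties together with the Bruhat decomposition ${\sf G}={\sf B}\,{\sf N}^{-}{\sf B}$, a generic triple of translates can be parameterized by unipotent elements $\rho,\omega,\tau\in{\sf N}$. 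Thus $c_{u,v}^w(Y)=0$ is equivalent to emptiness of the generic translated triple intersection.

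Second, I would identify $Z_w=\bigoplus_{\alpha>0,\ w^{-1}\alpha<0}{\mathfrak g}_\alpha$ as the conormal space to $X_w$ along a translate of the base point $[e]\in{\sf G}/{\sf B}$. One verifies $\dim Z_w=\ell(w)=\operatorname{codim}(X_w)$, and after the Killing-form identification $T^*_{[e]}({\sf G}/{\sf B})\cong{\mathfrak n}$, the conjugate $\rho Z_w\rho^{-1}$ is exactly the conormal space at $[e]$ to the translated variety $\rho X_w$.

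Third, I would combine the two observations via a sweeping argument. If $\rho Z_u\rho^{-1}+\omega Z_v\omega^{-1}+\tau Z_{\widetilde{w}}\tau^{-1}={\mathfrak n}$, then since the individual dimensions add to $\dim{\mathfrak n}$, the three conormal subspaces lie in direct sum at $[e]$, creating an obstruction to any common tangent direction; combined with the transitive ${\sf N}$-action on the open Bruhat cell (so any would-be intersection point can be translated to $[e]$), this rules out existence of a common point and forces $c_{u,v}^w(Y)=0$. Conversely, if the sum is a proper subspace of ${\mathfrak n}$, a common tangent direction enables a deformation/continuity argument producing an actual intersection point, giving $c_{u,v}^w(Y)>0$.

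The main obstacle will be making the third step fully rigorous: the translation from local spanning/transversality at the fixed base point $[e]$ to global emptiness of the intersection requires careful dimension analysis on the incidence variety $\{(\rho,\omega,\tau,p)\in{\sf N}^3\times{\sf G}/{\sf B}:\rho^{-1}p\in X_u,\,\omega^{-1}p\in X_v,\,\tau^{-1}p\in X_{\widetilde{w}}\}$. In particular, tracking the correct direction of the equivalence---``sum equals ${\mathfrak n}$ iff $c=0$'' rather than ``iff $c\ne 0$''---is subtle and reflects the fact that the spanning condition is checked at the fixed base point rather than at actual intersection points; this is the technical heart of Purbhoo's argument and the reason the identification in step two must use opposite Schubert geometry with the precise conventions fixed in~\cite{PurbhooThesis}.
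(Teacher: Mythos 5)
The paper does not prove this lemma; it cites Purbhoo's thesis verbatim, so there is no proof in the paper to compare against. Judging your proposal on its own merits, the central step (your ``third'') is logically incorrect. If the three conormal subspaces span $\mathfrak{n}$ at a point $p$ lying in $\rho X_u\cap\omega X_v\cap\tau X_{\widetilde w}$, that is precisely the condition that the intersection is \emph{transverse} at $p$: the tangent spaces meet in $\{0\}$, so $p$ is an \emph{isolated point of the intersection, contributing $1$}. It does not create an ``obstruction to any common point''; quite the opposite, transversality is the regime in which the intersection count is finite and positive. So the implication ``span $\Rightarrow$ no intersection $\Rightarrow c=0$'' does not follow from what you write. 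The converse paragraph has the same confusion in reverse: a non-spanning sum of conormals at a point of the intersection signals excess (positive-dimensional) intersection, which is not the same as ``producing an actual intersection point,'' and in fact the degree condition on $u,v,\widetilde w$ already guarantees a finite intersection for generic translates, so no separate deformation is needed to produce points.

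Two further issues underlie this. First, your sweeping/translation step relies on ``the transitive $\sf N$-action on the open Bruhat cell (so any would-be intersection point can be translated to $[e]$).'' But $\sf N$ \emph{fixes} $[e]=eB/B$ (since ${\sf N}\subset{\sf B}$); $\sf N$ acts simply transitively on the \emph{opposite} big cell, whose center is the ${\sf B}_-$-fixed point $[w_\circ]$. With the paper's conventions, $[e]$ lies in the open ${\sf B}_-$-orbit $\Omega_e$ and does not lie on $X_w$ for $w\neq e$, so ``conormal to $X_w$ at $[e]$'' is vacuous. Second, even at $[w_\circ]$ the identification of $Z_w$ as a literal conormal space to $X_w$ fails: $[w_\circ]$ is typically a \emph{singular} point of $X_w$, where the Zariski conormal has dimension strictly less than $\ell(w)=\dim Z_w$. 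The correct reading (following Purbhoo) is that $Z_w$ is a $\sf T$-degeneration of the conormal at a generic point of $X_w$, obtained via the identification of the big ${\sf N}$-orbit with $\mathfrak n$ and the fact that $\Theta_w=\{n\in{\sf N}:n[w_\circ]\in X_w\}$ is a $\sf T$-stable cone. The equivalence then comes from a dimension/dominance analysis of the incidence variety $\Sigma=\{(\rho,\omega,\tau,p)\in{\sf N}^3\times{\sf G}/{\sf B}:\rho^{-1}p\in X_u,\ \omega^{-1}p\in X_v,\ \tau^{-1}p\in X_{\widetilde w}\}$ (which always has $\dim\Sigma=3\dim{\sf N}$) and the Jacobian of the projection $\Sigma\to{\sf N}^3$, not from a pointwise transversality claim at a fixed base point. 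Without this incidence-variety machinery, the argument as proposed does not close; and as a sanity check, you should verify small cases such as ${\sf GL}_2$ with $u=v=w=e$, where the proposed reasoning as written gives the wrong sign of the equivalence.
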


\smallskip

Following \cite[$\S$17.5]{BCSS98}, define complexity class \ts $\BPP^U_\rr$ \ts to be
the class of problems which can be decided in probabilistic polynomial time by a BSS
machine over~$\rr$.  Here the machine is allowed to pick uniform random numbers in $[0,1]$.
It is known that \ts $\BPP^U_\rr = \P_\rr$\ts, see \cite[$\S$17.6]{BCSS98}.
Thus, it suffices to show that \. $\neg\SV \in \BPP^U_\rr\ts$.
% \begin{proof}[]

%\smallskip

In all types, generate upper-unitriangular matrices \. $\rho,\omega,\tau$ \.
whose entries are uniform random complex numbers \ts $a+i\. b$, where \ts
$a,b\in [-1,1]$ \ts and \ts $i=\sqrt{-1}$.  In type~$A$, this gives a \emph{full measure} on
an $\ep$-ball in ${\sf N}$  around~$\zero_{\sf N}$.  With probability $1$,
all generated numbers will be algebraically independent.  Note that although
these are complex matrices, we will treat them as arrays where entries are two
real numbers, and with multiplication defined as for complex numbers
(in the usual way).

For types $B$, $C$, and~$D$,  we  % generate free generic entries
then compute the resulting entries constrained by the isotropic condition
in $O(n^3)$ time.  Computing the corresponding inverse matrices using
Cramer's rule, can be done in $O(n^3)$ time in the BSS model.

% To apply the lemma, we need to show how to sample from a full measure on
% a $\ep$-ball in ${\sf G}$  around~$1_{\sf G}$.  Here we view \ts {\sf G} \ts
%  as a manifold (since it's over $\rr$, it has twice the dimension).
% Since all we need is a full measure, there is more than one way to do
% that.  For example, using the Diaconis--Shahshahani approach there is
% an inductive way to construct a measure on
%
% \cite{DS87}

Proceed to pick standard bases in $Z_u,Z_v$, and $Z_{w_\circ w}$,
again minding relevant isotropy conditions.  Compute subspaces as in Lemma~\ref{lem:Pur}.
% \Colleen{ensure transverse}
% Thus the
% matrix $n\times 3n$
% matrix $M$ corresponding to
Check that the sum \.
$\rho Z_u\rho^{-1}+\omega Z_v\omega^{-1}+\tau Z_{w_\circ w}\tau^{-1}$ \.
of these subspaces spans the whole~${\mathfrak n}$.
%
% Call the resulting matrix $M$.
This again can be determined in $O(n^{3})$ time.  By the lemma,
this gives a $\BPP^U_\rr$ algorithm for \ts $\neg\SV$.
%
%
%
% Thus by Lemma~\ref{lem:Pur}, we conclude that $\SV$ is in
% ${{\sf P}_\mathbb{C}}$.
% \. ${\sf BPP_\cc}\ts$, as desired.
\qed

%, the result follows.
% \Colleen{add argument for ${\sf BPP}^{{\sf P}_\mathbb{C}}={\sf P}_{\mathbb{C}}$ above (*****for Igor!!!********)}
% \end{proof}

\medskip

% \newpage

\section{A new HN system for all classical types \\ (joint with David E Speyer\protect\footnote{Department of Mathematics,
University of Michigan, Ann Arbor, MI 48109, USA. E-mail: \texttt{speyer@umich.edu}.}\ts\protect\footnote{David Speyer was supported by DMS-2246570 from the Natural Science Foundation.})}
\label{app:typeD}

\smallskip

% \subsection{New approach}
In this section we prove the following result,
thus completing the proof of Lemma~\ref{l:SV-HNP}:

\smallskip

\begin{lemma}[{\rm cf.\ Lemma~\ref{l:SV-HNP}}{}] \label{l:SV-HNP-app}
\. $\neg\SV$ \. reduces to \. $\HNP$ \. in type~$D$.
\end{lemma}
\smallskip

For the proof, we give an alternative construction of Schubert systems in
all classical types.  This gives a new proof of Lemma~\ref{l:SV-HNP} in types~$A$,
$B$ and~$C$.

% Note that the new construction can also be used to
% prove results in~$\S$\ref{ss:intro-BSS}, which already follow from our earlier construction.

\medskip

\subsection{Setup}
Consider ${\sf G}$ a complex Lie group of classical type: \ts \text{\sf GL}$_n(\cc)$, \ts \text{\sf SO}$_{2n+1}(\cc)$, {\sf Sp}$_{2n}(\cc)$, or \ts \text{\sf SO}$_{2n}(\cc)$.
%
% Then for Schubert varieties in ${\sf G}$, recalled Equation~\eqref{eq:SchubStructureInt}, noted below
% \[c_{u,v}^w \,  = \,\#\big\{X_u(E_{\bullet}) \cap X_v(F_{\bullet}) \cap X_{\widetilde{w}}(G_{\bullet})\big\},\]
% for generic $E_{\bullet},F_{\bullet},G_{\bullet}$.
%
We can equivalently express Equation~\eqref{eq:SchubStructureInt} as the following:
\begin{align*}
    c_{u,v}^w \,  &= \,\#\big\{X_u(E_{\bullet}) \cap X_v(F_{\bullet}) \cap X_{w_\circ w}(G_{\bullet})\big\}\\
    & = \,\#\big\{ X_u(E_{\bullet})  \cap \pi X_v(E_{\bullet})  \cap \rho X_{w_\circ w}(E_{\bullet})\big\} \\
    & = \,\#\big\{ \Omega_u(E_{\bullet})  \cap \pi \Omega_v(E_{\bullet})  \cap \rho \Omega_{w_\circ w}(E_{\bullet})\big\}
    % \\
    % & = \,\#\big\{ \pipha {B\dot{u}B}/B \cap \beta {B\dot{v}B}/B \cap \gamma {B\dot{\widetilde{w}}B}/B\}
\end{align*}

The first equality follows since can associate generic reference flags $E_{\bullet},F_{\bullet},G_{\bullet}$ in ${\sf G}/{\sf B}$ with generic $\pi$, $\rho \in {\sf G}$.
The second equality follows by Kleiman transversality.
%
% Using the definition of Schubert cells, we note
% \[ \Omega_u(E_{\bullet})  \cap \pi \Omega_v(E_{\bullet})  \cap \rho \Omega_{\widetilde{w}}(E_{\bullet}) \neq \emptyset \iff {{\sf B}\dot{u}{\sf B}}\cap \pi {{\sf B}\dot{v}{\sf B}} \cap \rho {{\sf B}\dot{\widetilde{w}}{\sf B}} \neq \emptyset.\]
%
Using the definition of Schubert cells, we see
\begin{equation}\label{eq:LaxEqEquiv}
    c_{u,v}^w>0 \ \iff \ {{\sf B}_{-}\dot{u}{\sf B}} \ts \cap \ts \pi {{\sf B}_{-}\dot{v}{\sf B}} \ts \cap \ts
    \rho {{\sf B}_{-}\dot{(w_\circ w)}{\sf B}} \. \neq \. \emp
\end{equation}
for generic \ts $\pi, \rho \in {\sf G}$.
Thus we can consider the intersection
\begin{equation}\label{eq:LaxEq}
 \Xi \, := \,  {{\sf B}_{-}\dot{u}{\sf B}} \ts \cap \ts \pi {{\sf B}_{-}\dot{v}{\sf B}} \ts \cap \ts \rho {{\sf B}_{-}\dot{(w_\circ w)}{\sf B}}.
\end{equation}

\smallskip
\begin{rem}
Equation~\eqref{eq:SchubStructureInt} defines Schubert coefficients as the number of points in a $0$-dimensional intersection. The intersection in $\Xi$ is positive dimensional, so the number of points in $\Xi$ is infinite. If desired, one can add additional intersections to this expression in order to cut $\Xi$ down to a finite set of cardinality $c_{uv}^w$. For our purposes, however, $\Xi$ will suffice as is.
\end{rem}

\smallskip

\subsection{Describing generic group elements}\label{sec:appCoeff}
For equations defining the intersection $\Xi$ \ts in Equation~\eqref{eq:LaxEq}, we construct a generic element \ts
$g\in{\sf G}$.

For $\textsf{GL}_n(\cc)$, we generate $g$ as in Section~\ref{sec:liftA}, where $g=(y_{ij})$ a matrix of parameters.
For $\textsf{Sp}_{2n}(\cc)$,  we generate $g$ as in Section~\ref{sec:liftBC}. In particular we create block skew-antisymmetric matrix of parameters $M$, i.e. satisfying Equation~\eqref{eq:blockskew}. Then we create a matrix of variables $g$ and set $g=({\rm I}_{2n}+M)^{-1}({\rm I}_{2n}-M)$.

Similarly, for $\textsf{SO}_{2n}(\cc)$,  we generate $g$ as in Section~\ref{sec:liftD}. In particular we create skew-antisymmetric matrix of parameters $M$, i.e.\  satisfying Equation~\eqref{eq:skew}. Then we create a matrix of variables $g$ and set $g=({\rm I}_{2n+1}+M)^{-1}({\rm I}_{2n+1}-M)$.
For
$\textsf{SO}_{2n+1}(\cc)$, we may generate $g$ similarly to $\textsf{SO}_{2n}(\cc)$, using the same argument as in Section~\ref{sec:liftD}. In particular we create skew-antisymmetric matrix of parameters $M$. Then we create a matrix of variables $g$ and set $g=({\rm I}_{2n+1}+M)^{-1}({\rm I}_{2n+1}-M)$.

We summarize these descriptions in Table~\ref{Tab:Gmain} below.  Here for each ${\sf G}$, we describe the set of parameters $\Par(g)$, the set of variables $\VarB(g)$, and the set of equations $\EG(g)$ needed to ensure $g\in {\sf G}$.  Note in each case that the sizes of $\Par(g)$, $\VarG(g)$, and $\EG(g)$ are in $O(n^2)$.

\begin{table}[ht]
\renewcommand{\arraystretch}{1.5}
\begin{tabular}{ |p{1.4cm}||p{4.5cm}|p{2.5cm}|p{5.2cm}|  }
 \hline
 % \multicolumn{3}{|c|}{Borel variables and equations} \\
 % \hline
 ${\sf G}$& $\Par(g)$ & $\VarB(g)$ & $\EG(g)$\\
 \hline
 $\textsf{GL}_n$  & $\{m_{ij}\}_{i,j\ts \in \ts [n]}$  & $\emp$   &$\emp$ \\
 $\textsf{SO}_{2n+1}$ & $\{m_{ij}\}_{i+j \ts \leq \ts 2n+2, \ i,j\ts \in \ts [2n+1]}$  &   $\{g_{ij}\}_{i,j\ts \in \ts[2n+1]}$  & $({\rm I}_{2n+1}+M)\cdot g=({\rm I}_{2n+1}-M)$   \\
 $\textsf{Sp}_{2n}$& $\{m_{ij}\}_{i+j\ts \leq \ts 2n+1, \ i,j \ts \in \ts [2n]}$   &   $\{g_{ij}\}_{i,j\ts \in \ts[2n]}$  & $ ({\rm I}_{2n}+M)\cdot g=({\rm I}_{2n}-M)$\\
 $\textsf{SO}_{2n}$ & $\{m_{ij}\}_{i+j\ts \leq \ts 2n+1, \ i,j\ts \in \ts  [2n]}$   &   $\{g_{ij}\}_{i,j\ts \in \ts[2n]}$  & $({\rm I}_{2n}+M)\cdot g=({\rm I}_{2n}-M)$\\
 \hline
\end{tabular}
\bigskip
\caption{Generating a matrix \ts $g\in {\sf G}$.}
  \label{Tab:Gmain}
\end{table}
%
%\Colleen{I know the notation $\Par(g)$, $\VarG(g)$, and $\EG(g)$ is bad, but I haven't decided on anything yet. I made a macro so they will be easy to fix.}
%\smallskip

\subsection{Describing generic Borel subgroup elements}\label{sec:appBorel}
To construct equations defining the intersection in Equation~\eqref{eq:LaxEq}, we must describe the equations ensuring that an upper triangular matrix \ts $B=(b_{ij})$ \ts lies \ts ${\sf B}\subset{\sf G}$.

For $\textsf{GL}_n(\cc)$, we have \ts $B=(b_{ij})_{i,j\in[n]}$ \ts lies in ${\sf B}$ if $B$ is invertible. Thus, for $b$ a variable, we have \ts $B\in {\sf B}$ \ts precisely when
\begin{equation}\label{eq:appTypeAborel}
 b\cdot \prod_{i=1}^n b_{ii} \. = \. 1
\end{equation}
is satisfiable.

For $\textsf{SO}_{2n+1}(\cc)$, let \ts ${\rm J}:={\rm D}_{2n+1}$ \ts be the antidiagonal matrix.
Since we require $B\in {\sf G}$, matrix $B$ must satisfy \ts
$B^T \cdot {\rm J}\cdot B= {\rm J}$ and $\det(B) = 1$, cf.\ Equation~\eqref{eq:spForm}.
% We aim to remove the determinantal equation.
Imposing $B^T \cdot {\rm J}\cdot B= {\rm J}$ implies that \ts $b_{ii} \ts b_{(2n+2-i)(2n+2-i)}=1$ \ts
for each $i\in[2n+1]$.
Since $B$ is upper triangular, this implies $\det(B) = b_{nn} = \pm 1$. So, in order to have $B$ in $\SO_{2n+1}$, we simply impose that $B^T J B = J$ and $b_{nn} = 1$.

For $\textsf{Sp}_{2n}(\cc)$  set
$${\rm J}\, := \, {\small \begin{pmatrix}
0 & {\rm D}_n\\
-{\rm D}_n & 0
\end{pmatrix}},
$$
cf.\ Equation~\eqref{eq:J-mat}.
To have $B\in {\sf G}$, then $B$ must satisfy Equation~\eqref{eq:spForm}. That is, we impose
$B^T \cdot {\rm J}\cdot B= {\rm J}$.

For \ts $\textsf{SO}_{2n}(\cc)$,  set \ts ${\rm J}:={\rm D}_{2n}$.
To have $B\in {\sf G}$, we have matrix $B$ must satisfy Equation~\eqref{eq:soForm}. That is,
we have \ts $B\cdot {\rm J}\cdot B^T = {\rm J}$ \ts and \ts $\det(B) = 1$.
Imposing \ts $B^T \cdot {\rm J}\cdot B= {\rm J}$ \ts
implies that we have \ts $b_{ii} \ts b_{(2n+1-i)(2n+1-i)}=1$ \ts for each $i\in[2n]$.
Thus, since $B$ is upper triangular, we have \ts $B\cdot {\rm J}\cdot B^T = {\rm J}$ \ts implies \ts $\det(B) = 1$.
Therefore, it suffices to impose $B^T \cdot {\rm J}\cdot B= {\rm J}$.

We summarize in Table~\ref{Tab:Borel}
the description of the set of variables $\VarB(B)$ and set of equations $\EB(B)$ needed to ensure that \ts $B\in {\sf B}\subset {\sf G}$.
Note that in each case that the sizes of $\VarB(B)$ and $\EB(B)$ are $O(n^2)$. Of course, this construction similarly generates $B\in {\sf B}_{-}\subset {\sf G}$ after transposing.

\begin{table}[ht]
\renewcommand{\arraystretch}{1.5}
\begin{tabular}{ |p{1.5cm}||p{3cm}|p{8cm}|  }
 \hline
 % \multicolumn{3}{|c|}{Borel variables and equations} \\
 % \hline
 ${\sf G}$& $\VarB(B)$ & $\EB(B)$\\
 \hline
 $\textsf{GL}_n$   & $\{b_{ij}\}_{i\ts \leq \ts  j \ts \in \ts [n]}\cup\{b\}$   &$\{ b\cdot \prod_{i\ts \in \ts[n]}b_{ii}=1  \}$ \\
 $\textsf{SO}_{2n+1}$ &   $\{b_{ij}\}_{i\ts \leq \ts  j\ts \in \ts [2n+1]}$  & $\{ b_{ii}\ts b_{(2n+2-i)(2n+2-i)}=1  \}_{i\ts \in \ts [2n+1]}\cup\{b_{nn}=1\}$   \\
 $\textsf{Sp}_{2n}$ &   $\{b_{ij}\}_{i\ts \leq \ts  j\ts \in \ts [2n]}$  & $\{ b_{ii}\ts b_{(2n+2-i)(2n+2-i)}=1  \}_{i\ts \in \ts [2n]}$\\
 $\textsf{SO}_{2n}$    &   $\{b_{ij}\}_{i\ts \leq \ts j\ts \in \ts [2n]}$  & $\{ b_{ii} \ts b_{(2n+2-i)(2n+2-i)}=1  \}_{i\ts \in \ts [2n]}$\\
 \hline
\end{tabular}

\bigskip

\caption{Generating a matrix \ts $B\in{\sf B}$.}
  \label{Tab:Borel}
\end{table}

%\Colleen{I know the notation $\VarB(B)$ and $\EB(B)$ is bad, but I haven't decided on anything yet. I made a macro so they will be easy to fix. }

\smallskip

\subsection{Describing the Equations} \label{Eqs}
Now we describe the equations defining $\Xi$ in~\eqref{eq:LaxEq}.
Observe that \ts $\Xi \neq \emp$ \ts if and only if there exist \ts
$P_1,P_2,P_3\in {\sf B}_{-}$ and $Q_1,Q_2,Q_3\in {\sf B}$ \ts such that
 \[
 P_1\dot{u}Q_1 \. = \.  \pi P_2\dot{v}Q_2 \quad \text{ and } \quad P_1\dot{u}Q_1 \. = \. \rho P_3\dot{(w_\circ w)}Q_3\ts.
 \]
Let \. $\mathcal{E}^{Y}(u,v,w)$, \. $Y \in \{A,B,C,D\}$, denote the system given by the following equations:
\begin{enumerate}
    \item \ $\EG(\pi)\cup \EG(\rho)$ in Table~\ref{Tab:Gmain},
    \item \ $\EB(P_1)\cup\EB(P_2)\cup\EB(P_3)\cup \EB(Q_1)\cup\EB(Q_2)\cup\EB(Q_3)$ \. in Table~\ref{Tab:Borel},
    \item \ $P_1\dot{u}Q_1 = \pi P_2\dot{v}Q_2$ \. and
    \item \ $P_1\dot{u}Q_1 = \rho P_3\dot{(w_\circ w)}Q_3$\ts.
\end{enumerate}
Here $Y$ is the type corresponding to ${\sf G}$.
The system \ts $\mathcal{E}^{Y}(u,v,w)$ \ts has the following variables:
\begin{enumerate}
    \item $\bal:=\VarG(\pi)\cup \VarG(\rho)$ in Table~\ref{Tab:Gmain}, and
    \item $\bbe:=\VarB(P_1)\cup \VarB(P_2)\cup \VarB(P_3)\cup \VarB(Q_1)\cup \VarB(Q_2)\cup \VarB(Q_3)$ \. in Table~\ref{Tab:Borel}.
\end{enumerate}
We solve $\mathcal{E}^{Y}(u,v,w)$ over $\mathbb{C}(\by,\bz)$, where $\by=\Par(\pi)$ and $\bz=\Par(\rho)$ from Table~\ref{Tab:Gmain}.

\begin{rem}
In types $B$ and $C$, one should be careful to choose a representative $\dot{\omega}$ for $\omega\in\mathcal{W}$ with determinant $1$.
In type $C$, we may use a signed permutation matrix where $\omega_{ij}\in\{0,-1 \}$ if $1 \leq i \leq n$ and $n+1 \leq j \leq 2n$, and $\omega_{ij}\in\{ 0, 1 \}$ otherwise. In type $B$, one may do this, or instead set the central value of $\omega_{(n+1)(n+1)}$ to be $\pm 1$ as needed. Note that in type $D$ this property holds for any representative.
\end{rem}

%    \subsection{Describing the Permutations} \label{Perms}
% For $\text{GL}_n$, for $\dot{u}$, we write the permutation matrix $u$.

% For
% $\text{SO}(2n+1)$, for $\dot{u}$, we write the signed permutation matrix $u=(u_{ij})$, where
% \[u_{ij}\in
% \begin{cases}
%     \{0,-1 \} & \text{ if }1 \leq i \leq n, \text{ and } n+1 \leq j \leq 2n+1\\
%     \{0,1 \} & \text{ otherwise }
% \end{cases}\]
% \Colleen{double check this one}

% For $\text{Sp}(2n)$  for $\dot{u}$, we write the signed permutation matrix $u=(u_{ij})$, where
% \[u_{ij}\in
% \begin{cases}
%     \{0,-1 \} & \text{ if }1 \leq i \leq n, \text{ and } n+1 \leq j \leq 2n\\
%     \{0,1 \} & \text{ otherwise }
% \end{cases}\]

% For $\text{SO}(2n)$, for $\dot{u}$, we write the signed permutation matrix $u$.

\begin{proof}[Proof of Lemma~\ref{l:SV-HNP-app} and Lemma~\ref{l:SV-HNP}]
Fix type \ts $Y\in\{A,B,C,D\}$.  Let \ts $u,v,w\in \mathcal W$ \ts be elements in
the corresponding Weyl group.
    By Equation~\eqref{eq:LaxEqEquiv}, it follows that $\mathcal{E}^{Y}(u,v,w)$ is satisfiable over $\mathbb{C}(\by,\bz)$ if and only if $c_{u,v}^w>0$. It is straightforward to check $\mathcal{E}^{Y}(u,v,w)$ has size $O(n^2)$.
 Thus, these systems are instances of \ts $\HNP$ \ts
by construction, with sets of variables \ts $(\bal,\bbe)$ \ts and
parameters \ts $(\by,\bz)$.  Therefore, \ts $\neg\SV$ \ts is in \ts $\HNP$ \ts
in each case, as desired.
\end{proof}

\end{document}